\documentclass[runningheads,a4paper]{llncs}

\setcounter{tocdepth}{3}
\usepackage{graphicx}
\usepackage{graphics,latexsym,xspace}
\usepackage[all]{xy}
\usepackage{gnuplottex}
\usepackage{lscape}
\usepackage{amssymb}
\usepackage{verbatim}
\usepackage{amsmath}
\usepackage{amsfonts}
\usepackage{tikz}
\newcommand{\ben}{\begin{enumerate}}
\newcommand{\bit}{\begin{itemize}}
\newcommand{\een}{\end{enumerate}}
\newcommand{\eit}{\end{itemize}}

\newcommand{\un}{\underline}
\renewcommand{\a}{\alpha}
\renewcommand{\b}{\beta}

\newcommand{\mc}{\mathcal}

\newcounter{romc}

\newcounter{alphc}
\newcommand{\bd}{\begin{definition}}
\newcommand{\ed}{\end{definition}}
\newcommand{\bp}{\begin{proposition}}
\newcommand{\ep}{\end{proposition}}
\newcommand{\ba}{\begin{eqnarray*}}
\newcommand{\ea}{\end{eqnarray*}}
\newcommand{\bde}{\begin{description}}
\newcommand{\ede}{\end{description}}
\newcommand{\be}{\begin{eqnarray}}
\newcommand{\ee}{\end{eqnarray}}
\newcommand{\bn}{\begin{note}}
\newcommand{\en}{\end{note}}

\newtheorem{notation}{Notation}

\newcommand{\blr}{\begin{list}{~(\roman{romc})~}
{\usecounter{romc}
        \setlength{\topsep}{0pt} \setlength{\itemsep}{0pt}}}
\newcommand{\elr}{\end{list}}
\newcommand{\bla}{\begin{list}{~(\alph{alphc})~} {\usecounter{alphc}
        \setlength{\topsep}{0pt} \setlength{\itemsep}{0pt}}}
\newcommand{\ela}{\end{list}}
\begin{document}

\title{Kleene Algebras and Logic:\\ Boolean and  Rough Set Representations, 3-valued, Rough  and Perp Semantics}

\author{ Arun Kumar and Mohua Banerjee \inst{} }

\institute{
 Department of Mathematics and Statistics, \\Indian Institute of Technology, Kanpur 208016, India\\
\email{arunk2956@gmail.com, mohua@iitk.ac.in}}

\maketitle

\begin{abstract}
A structural theorem for Kleene algebras is proved, showing that an element of a Kleene algebra can be looked upon as an ordered pair of sets. Further, we show that  negation with the Kleene  property (called the `Kleene negation') always arises from the set theoretic complement. The corresponding propositional  logic is then studied through a 3-valued and rough set semantics. It is also established that Kleene negation can be considered as a modal operator, and enables giving a perp semantics to the logic. One concludes with the  observation that all the  semantics for this  logic are equivalent.
 \\ \vskip 2pt {\small {\bf Key words:} Kleene algebras, %Non-classical logic, 
Rough sets,
3-valued logic, Perp semantics. %algebra of rough sets,
}
\end{abstract}

\section{Introduction}\label{section1}
Algebraists,  since the beginning of work on  lattice theory, have been  keenly interested in representing lattice-based algebras as 
algebras based on {\it set} lattices. Some such well-known  representations are the 
Birkhoff representation for finite lattices,
Stone representation for Boolean algebras, or
Priestley representation for distributive lattices.  It is also well-known that such representation theorems for classes of lattice-based algebras  play a key role in studying set-based semantics of   logics  `corresponding' to the classes. In this paper, we pursue this line of investigation, and focus on {\it Kleene algebras} and their representations. We then move to the corresponding  propositional logic, denoted  $\mathcal{L}_{K}$,  and define a 3-valued, rough set and perp  semantics for it. Through the work here, one is able to establish that  $\mathcal{L}_{K}$  and the 3-element Kleene algebra $\textbf{3}$ (cf. Figure \ref{fig2.1}, Section \ref{section2}) play the same
fundamental role among the Kleene algebras that classical propositional 
logic and the 2-element Boolean algebra $\textbf{2}$   play among the Boolean algebras.

% as: treat 
%each element of the algebra as propositional variable and through the representation each variable is identified by a set.
%In the Stone's representation of Boolean algebras: join, meet and negation are represented as set theoretic
%union, intersection and complement.) 

 Kleene algebras were  introduced by Kalman \cite{KAL58} and have been studied under different  names such as normal i-lattices, Kleene lattices and normal quasi-Boolean algebras e.g. cf. \cite{Cignoli65,Cignoli86}. The algebras are defined as follows.
\begin{definition}\label{def1}
An algebra $\mathcal{K} = (K,\vee,\wedge,\sim,0,1)$ is called a {\rm Kleene algebra} if  the following hold.
\begin{enumerate}
\item $\mathcal{K} = (K,\vee,\wedge,\sim,0,1)$ is a De Morgan algebra, i.e., 
      \blr
			\item $(K,\vee,\wedge,0,1)$ is a bounded distributive lattice, and for all $a,b \in K$,
			\item $\sim(a \wedge b) = \sim a ~\vee \sim b$ {\rm (De Morgan property)},
			\item $\sim \sim a = a$ {\rm (involution)}.
			\elr
\item $a~ \wedge \sim a \leq b~ \vee \sim b$, for all $a,b \in K$ {\rm (Kleene property)}.
\end{enumerate}
\end{definition}

 %But authors could not find  the issue of representing the Kleene algebras by Boolean algebras.   
  
In order to investigate a representation result for Kleene algebras, it would be natural to first turn to the known representation results for De Morgan algebras, as Kleene algebras are based on them. One  finds the following, in terms of sets.
\bit
\item Rasiowa \cite{Rasiowa1} represented De Morgan algebras as set-based De Morgan algebras, where De Morgan negation  is defined by an involution function.
\item In Dunn's \cite{Dunn66,Dunn99} representation, each element of a De Morgan algebra can be identified with an ordered pair of sets, where De Morgan negation is defined as reversing the order in the pair.  We note that this representation of De Morgan algebras leads to Dunn's famous 4-valued semantics of De Morgan logic.

\eit

On the other hand, we also find that  there are algebras {\it based on} Kleene algebras which can be represented by ordered pairs of sets, and where negations are described 
by set theoretic complements. Consider the set $B^{[2]} := \{(a,b): a\leq b, a,b \in B\}$, for any partially ordered set $(B,\leq)$.
\bit \item  (Moisil (cf. \cite{Cignoli07})) For each 3-valued {\L}ukasiewicz algebra $\mathcal{A}$, there exists a Boolean algebra $B$ such that $\mathcal{A}$ can be embedded into $B^{[2]}$.
     \item  (Katri\v{n}\'{a}k \cite{KATRINAK74}, cf. \cite{Boicescu91}) Every regular double Stone algebra can be embedded into $B^{[2]}$ for some Boolean algebra $B$.
\eit
Rough set theory \cite{pawlak82,pawlak91} also provides a way to represent algebras as pairs of sets. In rough set terminology (that will be elaborated on in Section \ref{section4}), we have the following.
\bit \item (Comer \cite{Comer95}) Every regular double Stone algebra is isomorphic to an algebra of rough sets in a Pawlak approximation space. 

\item  (J\"{a}rvinen \cite{JR11}) %Let $\mathbb{A} = (A,\vee,\wedge,c,0,1)$ be a 
Every Nelson algebra defined over an algebraic lattice is isomorphic to an algebra of rough sets in an approximation space based on a quasi order. %Then, there exists a set $U$ and a %quasi order on $U$ such that $\mathbb{A} \cong \mathcal{RS}$.
\eit
 
In this article, the following representation results are established for Kleene algebras.
\begin{theorem} \label{mainthm} \noindent 
\blr \item Given a Kleene algebra $\mathcal{K}$, there exists a Boolean algebra $\mathcal{B_{K}}$ such that 
$\mathcal{K}$ can be embedded into $\mathcal{B_{K}}^{[2]}$. 
\item Equivalently, every  Kleene algebra  is isomorphic to an algebra of rough sets in a Pawlak approximation space. 
\elr
\end{theorem}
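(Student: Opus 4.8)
The plan is to realize the abstract pair description concretely through the Stone--Birkhoff dual space of the lattice reduct, and to let the Kleene property do exactly one job: force a comparability that turns rough-set approximations into lattice homomorphisms. First I would record the target: for any Boolean algebra $B$, the set $B^{[2]}$ with componentwise meet and join, bounds $(0,0),(1,1)$, and the twist negation $\sim(x,y):=(y',x')$ (where $'$ is Boolean complement) is itself a Kleene algebra. The only non-routine point, the Kleene inequality, holds because $x\le y$ gives $(x,y)\wedge\sim(x,y)=(0,\,y\wedge x')$ and $(u,v)\vee\sim(u,v)=(u\vee v',\,1)$, and $(0,\,\cdot\,)\le(\cdot,\,1)$ always. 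So it suffices to embed $\mathcal K$ into some $B^{[2]}$.

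To build the Boolean algebra, let $X$ be the set of prime filters of the bounded distributive lattice reduct of $\mathcal K$ and $\phi(a):=\{P\in X:a\in P\}$ the Birkhoff--Stone lattice embedding. The De Morgan negation induces $g(P):=\{a:\sim a\notin P\}$; I would check in the usual way that $g$ is an order-reversing involution of $X$ satisfying $\phi(\sim a)=X\setminus g(\phi(a))$. Take $\mathcal{B_K}$ to be the field of $g$-invariant subsets of $X$ — equivalently, the Boolean algebra of definable sets of the approximation space $(X,R)$ whose $R$-classes are the $g$-orbits $\{P,g(P)\}$ — and define $\Psi(a):=\big(\underline R\,\phi(a),\,\overline R\,\phi(a)\big)=\big(\phi(a)\cap g\phi(a),\ \phi(a)\cup g\phi(a)\big)$. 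Both components are $g$-invariant and nested, so $\Psi(a)\in\mathcal{B_K}^{[2]}$, and each $\Psi(a)$ is patently the rough set of $\phi(a)$.

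The heart of the argument — and the step I expect to be the main obstacle — is showing $\Psi$ preserves meets and joins, since upper approximation does not preserve intersections in general. Here the Kleene property is decisive: I would first prove that condition (2) of Definition~\ref{def1} is equivalent to the statement that for every prime filter $P$ the filters $P$ and $g(P)$ are $\subseteq$-comparable (if $x\in P\setminus g(P)$ and $y\in g(P)\setminus P$ then $x\wedge\sim x\in P$ while $y\vee\sim y\notin P$, contradicting $x\wedge\sim x\le y\vee\sim y$). Comparability forces, at each $P$ uniformly in $a$, that $a\in g(P)\Rightarrow a\in P$ or else $a\in P\Rightarrow a\in g(P)$, which is precisely what collapses the troublesome cross-terms and yields $\overline R(\phi(a)\cap\phi(b))=\overline R\phi(a)\cap\overline R\phi(b)$ together with the dual identity for $\underline R$ and unions. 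Preservation of $\sim$ and of the bounds is then a direct computation from $\phi(\sim a)=X\setminus g(\phi(a))$, giving $\Psi(\sim a)=\sim\Psi(a)$.

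It remains to check injectivity and to read off both statements. Splitting $X$ into the three $g$-types (where $P\subsetneq g(P)$, $P=g(P)$, or $g(P)\subsetneq P$), comparability shows that membership in $\overline R\phi(a)$ recovers $a\in P$ at the $+$ and $0$ points while membership in $\underline R\phi(a)$ recovers $a\in P$ at the $-$ and $0$ points; hence $\Psi(a)=\Psi(b)$ forces $\phi(a)=\phi(b)$, and injectivity of $\phi$ gives $a=b$. Thus $\Psi$ embeds $\mathcal K$ into $\mathcal{B_K}^{[2]}$, proving (i). For (ii) I would simply observe that the construction already exhibits $\Psi(\mathcal K)$ as a set of rough sets in the Pawlak space $(X,R)$, closed under the componentwise operations and the complement-based negation; since rough-set algebras over $(X,R)$ are exactly such pair algebras over the definable-set Boolean algebra $\mathcal{B_K}$, statements (i) and (ii) are two readings of the same embedding.
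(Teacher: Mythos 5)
Your proposal is correct, and it takes a genuinely different route from the paper's. The paper first invokes the Balbes--Dwinger subdirect representation (Theorem \ref{thm2.2}) to embed $\mathcal{K}$ into $\textbf{3}^{I}$, then proves $\textbf{3}^{I}\cong(\textbf{2}^{I})^{[2]}$ by matching the completely join-irreducible elements of the two complete lattices and extending via Birkhoff's lemma (Lemma \ref{lemma2.1}), upgrading the lattice isomorphism to a Kleene isomorphism by J\"{a}rvinen's $j^{*}$ technique (Theorem \ref{thm2.10}); part (ii) then requires separate approximation-space surgery (Theorems \ref{thm3.11}--\ref{thm13}, duplicating objects whose equivalence class is a singleton), because in a given space the generalized rough sets $\mathcal{D}^{[2]}$ properly contain the actual rough sets $\mathcal{RS}$. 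You instead work on the prime-filter space with the standard De Morgan involution $g(P)=\{a : \sim a\notin P\}$ (in effect the Cornish--Fowler duality), isolate as your key lemma that the Kleene axiom forces $P$ and $g(P)$ to be $\subseteq$-comparable --- a characterization the paper never states --- and verify that comparability is exactly what kills the cross-terms $\phi(a)\cap g\phi(b)$ in the upper approximation of an intersection (and dually for lower approximations of unions), so that $a\mapsto\bigl(\phi(a)\cap g\phi(a),\,\phi(a)\cup g\phi(a)\bigr)$ is a Kleene embedding; your three-way split into points with $P\subsetneq g(P)$, $P=g(P)$, $g(P)\subsetneq P$ does give injectivity as claimed. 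Your route is more self-contained (only the prime filter theorem is needed; no classification of subdirectly irreducibles, no complete distributivity/CJI machinery) and delivers (i) and (ii) simultaneously: each $\Psi(a)$ is by construction the rough set of $\phi(a)$ in the orbit space, so you never meet the $\mathcal{R}$-versus-$\mathcal{RS}$ discrepancy --- fixed points of $g$ never fall in the boundary of a $\Psi$-image, which is precisely the obstruction the paper's duplication trick is designed to remove. What the paper's route buys in exchange is the stronger structural statement $\textbf{3}^{I}\cong(\textbf{2}^{I})^{[2]}$, an isomorphism of the ambient algebras rather than merely an embedding, which it reuses later (e.g., $\textbf{3}\cong\textbf{2}^{[2]}$ in Example \ref{eg3} and the 3-valued semantics of Section \ref{section3}). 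One small point: you assert that comparability is \emph{equivalent} to the Kleene axiom but prove only the forward implication; that is harmless, since the converse is never used.
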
 

The De Morgan negation operator with the Kleene property mentioned in Definition \ref{def1}, is referred to as the {\it Kleene negation}. In literature, one finds various generalizations of the classical (Boolean) negation, including the De Morgan and Kleene negations. %, there are several  . %(e.g. the pseudo negation), De Morgan (quasi) negation, the above Kleene negation, and many more. 
It is natural to ask the following question:
do these generalized negations arise from (or can be described by) the Boolean negation? The representation result above (Theorem \ref{mainthm}) for Kleene algebras shows that 
Kleene algebras always arise from Boolean algebras, thus answering the above question in the affirmative for the Kleene negation.
%As mentioned above, De Morgan algebras lead to 4-valued semantics of De Morgan logic. 

%{\color{blue} Note that representation of lattice based algebras as algebras in which objects are pair of sets, have its logical importance. For example  Dunn's representation of De Morgan algebras \cite{Dunn66,Dunn99} leads to his famous 4-valued semantics. In this article, we provide a 3-valued semantics of logic $\mathcal{L}_{K}$. $\mathcal{L}_{K}$ is De Morgan consequence system \cite{Dunn99} with the negation operator satisfying the {\it Kleene axiom}: $\alpha~ \wedge \sim \alpha ~\vdash \beta \vee \sim \beta$. We show that $\mathcal{L}_{K}$ is  sound and complete with respect to  3-valued semantics and  rough set . Moroever, it can be imparted a {\it perp semantics} \cite{Dunn05}.....} 

We next proceed to study  the  logic $\mathcal{L}_{K}$ corresponding to the class of Kleene algebras.  $\mathcal{L}_{K}$ is the De Morgan consequence system \cite{Dunn99} with the negation operator satisfying the {\it Kleene axiom}: $\alpha~ \wedge \sim \alpha \vdash \beta~ \vee \sim \beta$. We show that $\mathcal{L}_{K}$ is  sound and complete with respect to a 3-valued as well as a  rough set semantics. Moroever, it can be imparted a {\it perp semantics} \cite{Dunn05}. %Note that the De Morgan negation can  be visualized as a {\it modal} operator, defining the 
In fact, the perp semantics  provides a framework to study various negations from the logic as well as algebraic points of view. In particular, 
De Morgan logic is sound and complete with respect to a class of perp ({\it compatibility}) frames, and thus the algebraic semantics, 4-valued semantics and perp semantics for De Morgan logic coincide. In  case of the  logic  $\mathcal{L}_{K}$, we obtain that  its algebraic,   3-valued,  rough set  and perp semantics  are all equivalent. 
%In other words, to check the derivability of consequent (or sequent), either we verify the %validity in De Morgan algebras or inthe  De Morgan algebra $\textbf{4}$ or in a (characterized %for De Morgan logic) class of perp frames.\\
%\noindent Our study of the axiom $\alpha \wedge \sim \alpha \vdash \beta \vee \sim \beta$ in this paper is in two folds:

The paper is organized as follows. In Section \ref{section2}, we prove (i) of Theorem \ref{mainthm}, in the form of Theorem \ref{thm2.1}. The logic $\mathcal{L}_{K}$ and its 3-valued semantics are introduced in Section \ref{section3}, further we prove soundness and completeness results. In Section \ref{section4}, we establish a rough set representation of Kleene algebras, that is, (ii) of Theorem  \ref{mainthm}, relate rough sets with the 3-valued semantics considered in this work, and finally present completeness of $\mathcal{L}_{K}$ with respect to the rough set  semantics. 
In Section \ref{section5}, we discuss the perp semantics for $\mathcal{L}_{K}$, and  investigate the Kleene property in perp frames. Section \ref{section5} ends with the observation that  all the semantics defined for $\mathcal{L}_{K}$ are equivalent (Theorem \ref{thm20}). We conclude the article in Section \ref{section6}. 
%{\color{blue} 
%*AK I wanted to highlight this note*\\
%{\it The logic $\mathcal{L}_{K}$  and the Kleene lattice $\textbf{3}$ play the same
%fundamental role among the Kleene algebras that classical propositional 
%logic and the Boolean algebra $\textbf{2}$ play among the Boolean algebras.}}

The lattice theoretic results used in this article are taken from \cite{davey}. We  use the convention of representing a set $\{x,y,z,...\}$ by $xyz....$.

\section{Boolean representation of Kleene algebras} \label{section2}

Construction of new types of algebras from a given algebra has been of  prime interest for algebraists, especially  in the context of algebraic logic.  Some well known examples of such construction are:
\bit
\item  Nelson algebra from a given Heyting algebra (Vakarelov \cite{Vakarelov77,Fidel78}).
\item Kleene algebras from distributive lattices (Kalman \cite{KAL58}).
\item 3-valued {\L}ukasiewicz-Moisil (LM) algebra from a given Boolean algebra (Moisil, cf. \cite{Cignoli07}).
\item Regular double Stone algebra from a Boolean algebra  (Katri\v{n}\'{a}k \cite{KATRINAK74}, cf. \cite{Boicescu91}).
\eit
\noindent Our work is based on  the Moisil construction of a 3-valued LM algebra (which is, in particular, a Kleene algebra). Let us present this construction.\vskip 2pt 
\noindent Let $\mathcal{B} := (B,\vee,\wedge,^{c},0,1)$ be a Boolean algebra. Consider again, the set \vskip 2pt 
\hspace*{2 cm} $B^{[2]} := \{(a,b): a\leq b, a,b \in B\}$. 

\begin{proposition}\label{prop1}
$\mathcal{B}^{[2]} := (B^{[2]}, \vee, \wedge, \sim, (0,0), (1,1))$ is a Kleene algebra, where,\\
\hspace*{2 cm} $(a,b) \vee (c,d) := (a\vee c,b \vee d )$,\\
\hspace*{2 cm} $(a,b) \wedge (c,d) := (a\wedge c,b \wedge d )$,\\ 
\hspace*{2 cm} $\sim(a,b) := (b^{c},a^{c})$.
%\noindent then
%$\mathcal{B^{[2]}} = (B^{[2]}, \vee, \wedge, \sim, (0,0), (1,1))$ is a Kleene algebra, where $(0,0)$, $(1,1)$ are bottom and top elements respectively. Note that in case of Boolean algebra Kalman construction coincide with the Moisil construction.
\end{proposition}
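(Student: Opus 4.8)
The plan is to verify the two clauses of Definition~\ref{def1} in turn, after first checking that the three operations are well-defined on $B^{[2]}$. Closure is immediate from monotonicity: if $a\leq b$ and $c\leq d$, then $a\vee c\leq b\vee d$ and $a\wedge c\leq b\wedge d$, so $\vee$ and $\wedge$ return elements of $B^{[2]}$; and since complementation reverses order in a Boolean algebra, $a\leq b$ gives $b^{c}\leq a^{c}$, so $\sim(a,b)\in B^{[2]}$ as well. The designated bounds $(0,0)$ and $(1,1)$ clearly lie in $B^{[2]}$.

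For clause~1, I would observe that $\vee$ and $\wedge$ are defined coordinatewise, so $(B^{[2]},\vee,\wedge)$ is a sublattice of the product lattice $\mathcal{B}\times\mathcal{B}$, carrying the componentwise order $(a,b)\leq(c,d)$ iff $a\leq c$ and $b\leq d$, with bottom $(0,0)$ and top $(1,1)$. Distributivity is inherited coordinatewise from $\mathcal{B}$. The De Morgan property and involution then reduce to short computations in $\mathcal{B}$: expanding $\sim((a,b)\wedge(c,d))=((b\wedge d)^{c},(a\wedge c)^{c})$ and applying the Boolean De Morgan laws $(x\wedge y)^{c}=x^{c}\vee y^{c}$ yields exactly $\sim(a,b)\vee\sim(c,d)$, while $\sim\sim(a,b)=((a^{c})^{c},(b^{c})^{c})=(a,b)$ uses only Boolean double negation. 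This establishes that $\mathcal{B}^{[2]}$ is a De Morgan algebra.

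The step I expect to be the crux is the Kleene property (clause~2), and the whole point is that the defining constraint $a\leq b$ of $B^{[2]}$ does the work. Computing $(a,b)\wedge\sim(a,b)=(a\wedge b^{c},\,b\wedge a^{c})$, the hypothesis $a\leq b$ forces $a\wedge b^{c}\leq b\wedge b^{c}=0$, so the first coordinate collapses to $0$. Dually, for $(c,d)\vee\sim(c,d)=(c\vee d^{c},\,d\vee c^{c})$, the hypothesis $c\leq d$ gives $d^{c}\leq c^{c}$, whence $d\vee c^{c}\geq d\vee d^{c}=1$ and the second coordinate is $1$. Therefore $(a,b)\wedge\sim(a,b)=(0,\,b\wedge a^{c})$ and $(c,d)\vee\sim(c,d)=(c\vee d^{c},\,1)$, and the required inequality $(a,b)\wedge\sim(a,b)\leq(c,d)\vee\sim(c,d)$ holds coordinatewise because $0$ is the least and $1$ the greatest element of $\mathcal{B}$. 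I would emphasise this as the conceptual heart of the proposition: the order condition in $B^{[2]}$ is precisely what makes the ``inconsistent'' lower coordinate of $x\wedge\sim x$ vanish and the ``complete'' upper coordinate of $y\vee\sim y$ saturate, which is exactly what the Kleene inequality demands.
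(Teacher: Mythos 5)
Your proof is correct and its crux --- the computation $(a,b)\wedge\sim(a,b)=(0,\,b\wedge a^{c})$ and $(c,d)\vee\sim(c,d)=(c\vee d^{c},\,1)$, using $a\leq b$ and $c\leq d$ to collapse the coordinates --- is exactly the argument the paper gives for the Kleene property. The paper simply omits the routine verifications (closure, lattice structure, De Morgan laws, involution) that you spell out, so the two proofs are essentially the same.
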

\begin{proof} Let us  only prove the Kleene property for $\sim$. Let $(a,b), (c,d) \in B^{[2]}$. 

\noindent $(a,b) \wedge \sim (a,b) = (a,b) \wedge (b^{c},a^{c}) = (a \wedge b^{c},b \wedge a^{c}) = (0,b \wedge a^{c})$.

\noindent $(c,d) \vee \sim (c,d) = (c,d) \vee (d^{c},c^{c}) = (c \vee d^{c}, d \vee c^{c}) = (c \vee d^{c},1)$.

\noindent Hence $(a,b) \wedge \sim (a,b) \leq (c,d) \vee \sim (c,d)$.

\end{proof}

\noindent In this section we  prove  the following representation result. %given in Theorem \ref{thm2.1} (cf. Introduction).

\begin{theorem} \label{thm2.1}
Given a Kleene algebra $\mathcal{K}$, there exists a Boolean algebra $\mathcal{B_{K}}$ such that 
$\mathcal{K}$ can be embedded into $\mathcal{B_{K}}^{[2]}$.
\end{theorem}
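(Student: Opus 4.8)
My plan is to realise the embedding as a map of the form $\Phi(x) = (g(\sim x)^{c}, g(x))$, where $g \colon \mathcal{K} \to \mathcal{B_{K}}$ is a bounded lattice homomorphism into a suitable Boolean algebra $\mathcal{B_{K}}$. The shape of $\Phi$ is essentially forced: if $\Phi$ is to send $\sim$ to the operation $(a,b) \mapsto (b^{c},a^{c})$ of $\mathcal{B_{K}}^{[2]}$, then the lower coordinate of $\Phi(x)$ must be the complement of the upper coordinate of $\Phi(\sim x)$, which dictates the lower coordinate $g(\sim x)^{c}$ once the upper coordinate $g$ is chosen. A short computation, using only that $g$ is a bounded lattice homomorphism together with the De Morgan and involution laws, then shows that this $\Phi$ automatically preserves $\vee$, $\wedge$, $\sim$, $0$ and $1$. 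So the whole theorem reduces to producing a single bounded lattice homomorphism $g \colon \mathcal{K}\to\mathcal{B_{K}}$ with two properties: (i) $g(x)\vee g(\sim x)=1$ for every $x$, which is exactly the condition making each pair $(g(\sim x)^{c},g(x))$ a legitimate element of $\mathcal{B_{K}}^{[2]}$, i.e.\ $g(\sim x)^{c}\le g(x)$; and (ii) $x \mapsto (g(x),g(\sim x))$ is injective.

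To build $g$ I would use prime filters. Each $2$-valued coordinate of a lattice homomorphism out of $\mathcal{K}$ corresponds to a prime filter, and condition (i) says that along every such coordinate $Q$ we must have $x\in Q$ or $\sim x\in Q$, that is, $x\vee\sim x\in Q$ for all $x$; call a prime filter \emph{normal} if it contains every element of the form $x\vee\sim x$. The natural candidate is then $\mathcal{B_{K}} := \mathcal{P}(N)$, where $N$ is the set of normal prime filters of $\mathcal{K}$, with $g(x) := \{Q\in N : x\in Q\}$. This is a bounded lattice homomorphism and satisfies (i) by the very definition of normality.

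The crux, and the only place where the Kleene property is used, is to show that $N$ is large enough for (ii). Writing $P^{\ast} := \{x : \sim x\notin P\}$ for the prime filter induced by $\sim$ (an order-reversing involution on the prime-filter space), one checks that $P$ is normal iff $P^{\ast}\subseteq P$. The key lemma I would establish is that the Kleene axiom forces $P$ and $P^{\ast}$ to be comparable for every prime filter $P$: were they incomparable one could pick $a\in P\setminus P^{\ast}$ and $b\in P^{\ast}\setminus P$, whence $a\wedge\sim a\in P$ while $b\vee\sim b\notin P$, contradicting $a\wedge\sim a\le b\vee\sim b$ together with $P$ being an upward-closed prime filter. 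Consequently each $\ast$-pair $\{P,P^{\ast}\}$ has a normal member, namely the larger one (or $P$ itself when $P=P^{\ast}$).

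Finally, for injectivity I would argue directly with the prime filter separation theorem for distributive lattices. Given $x\ne y$, say $x\not\le y$, choose a prime filter $P$ with $x\in P$ and $y\notin P$. If $P$ is already normal it separates $x$ and $y$ in the upper coordinate $g$. If it is not, then by the comparability lemma $P^{\ast}$ is normal, and translating through $P=(P^{\ast})^{\ast}$ yields $\sim x\notin P^{\ast}$ and $\sim y\in P^{\ast}$, so $P^{\ast}$ separates $\sim x$ and $\sim y$; either way $(g(x),g(\sim x))\ne(g(y),g(\sim y))$. The main obstacle is precisely this comparability lemma: without it the normal prime filters could fail to separate points, and indeed the naive choice of taking $\mathcal{B_{K}}$ to be the power set of \emph{all} prime filters with $g$ the Stone map collapses, since there the requirement $g(\sim x)^{c}\le g(x)$ amounts to the excluded middle $x\vee\sim x=1$, which fails in a general Kleene algebra.
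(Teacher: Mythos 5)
Your proof is correct, and it takes a genuinely different route from the paper's. The paper proceeds in two stages: it first quotes the subdirect representation theorem (the only subdirectly irreducible Kleene algebras are $\textbf{1}$, $\textbf{2}$, $\textbf{3}$) to embed $\mathcal{K}$ into a power $\textbf{3}^{I}$, and then proves the sharper statement $\textbf{3}^{I} \cong (\textbf{2}^{I})^{[2]}$ (Theorem~\ref{thm2.3}) by matching the completely join irreducible elements of the two lattices, extending the resulting order isomorphism to a lattice isomorphism via Birkhoff's lemma, and upgrading it to a Kleene isomorphism with J\"{a}rvinen's $j^{*}$ technique. You instead build the Boolean algebra and the embedding in one stroke, Stone--Priestley style: $\mathcal{B_{K}}$ is the power set of the set $N$ of normal prime filters, $\Phi(x) = (g(\sim x)^{c}, g(x))$, and the crux is your comparability lemma, that the Kleene axiom forces $P$ and $P^{*}$ to be comparable, so each $*$-pair has a normal member and $N$ separates points. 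That lemma is precisely the classical characterization of Kleene algebras among De Morgan algebras in their prime-filter (Priestley) spaces, going back to Kalman and Cornish--Fowler; your verification of it is sound, as are the routine checks that $\Phi$ lands in $\mathcal{B_{K}}^{[2]}$ (normality of $Q$ plus primeness gives $x \in Q$ or $\sim x \in Q$), preserves the operations (De Morgan and involution laws), and is injective (the two-case argument using $P$ or its normal companion $P^{*}$, with $x \in P \Leftrightarrow~ \sim x \notin P^{*}$). As to what each approach buys: yours is self-contained and elementary --- a single application of the prime filter theorem, with no subdirect decomposition and no machinery of completely join irreducible elements --- and it realizes $\mathcal{B_{K}}$ concretely as a power set, so it feeds just as directly into the rough-set reformulation (Corollary~\ref{cor2}); the paper's route costs more machinery but delivers more than the embedding, namely the exact isomorphism $\textbf{3}^{I} \cong (\textbf{2}^{I})^{[2]}$, which is what it reuses in Theorem~\ref{thm13} to get algebras of rough sets up to isomorphism rather than up to embedding. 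Your closing remark --- that taking \emph{all} prime filters fails because $g(\sim x)^{c} \leq g(x)$ would then amount to $x~ \vee \sim x = 1$ --- correctly pinpoints why the restriction to normal filters, and hence the Kleene property, is exactly what is needed.
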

%\noindent {\color{blue} To put this result in a context, we invite the readers to recall:   
% Such a representation in case of 3-valued LM algebra and regular double Stone algebra have been proved by Moisil cf. \cite{Cignoli07} and Katri{\v n}{\' a}k \cite{KATRINAK74} respectively.}

\noindent Observe that we already have the following well-known representation theorem, due to the fact that $\textbf{1,~2}$ and $\textbf{3}$ (Figure \ref{fig2.1}) are the only subdirectly irreducible (Kleene) algebras
in the variety of Kleene algebras.

\begin{theorem} \label{thm2.2} \cite{Balbes74}
Let $\mathcal{K}$ be a Kleene algebra. There exists a (index) set $I$ such that $\mathcal{K}$ can be 
embedded into {\rm$\textbf{3}^{I}$}.

\begin{figure}[h] %\label{fig3}
\begin{tikzpicture}[scale=.75]
    
    \draw [dotted] (0,1) -- (0,1);

		\draw [fill] (3,2) circle [radius = 0.1];
		\node [right] at (3.25,2) {$a = \sim a$};
		\node [right] at (1.5,2) {$\textbf{1}$ := };

		\node [right] at (6,2) {$\textbf{2}$ := };
		\draw (8,1) -- (8,3);
		\draw [fill] (8,1) circle [radius = 0.1];
		\node [below] at (8,1) {0 = $\sim$ 1};
		\draw [fill] (8,3) circle [radius = 0.1];
		\node [above] at (8,3) {1 = $\sim$ 0};
		
		\node [right] at (11,2) {$\textbf{3}$ := };
		\draw (13,0) -- (13,2);
		\node [below] at (13,0) {0 = $\sim$ 1};
    \draw [fill] (13,0) circle [radius = 0.1];
		\node [right] at (13,2) {$a = \sim a$};
    \draw [fill] (13,2) circle [radius = 0.1];
		\draw (13,2) -- (13,4);
		\draw [fill] (13,4) circle [radius = 0.1];
		\node [above] at (13,4) {1 = $\sim$ 0};

\end{tikzpicture}
\caption{Subdirectly irreducible Kleene algebras}

%\caption{Hass diagram of $B^{[2]}$}
\label{fig2.1}
\end{figure}
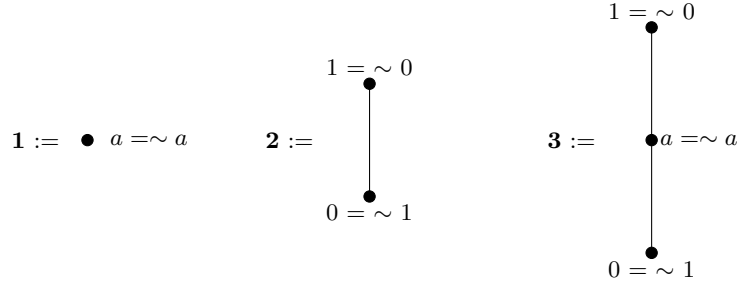
\end{theorem}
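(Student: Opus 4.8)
The plan is to obtain the embedding as a consequence of Birkhoff's subdirect representation theorem together with the classification of the subdirectly irreducible Kleene algebras quoted just above the statement.

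First I would recall that the class of Kleene algebras is a variety: it is defined by the equations of Definition \ref{def1}, the Kleene property $a \wedge \sim a \le b \vee \sim b$ being equationally expressible (e.g.\ as $(a \wedge \sim a) \vee (b \vee \sim b) = b \vee \sim b$). Hence Birkhoff's subdirect representation theorem applies, and $\mathcal{K}$ is isomorphic to a subdirect product $\prod_{i \in I} \mathcal{K}_i$, where each factor $\mathcal{K}_i$ is a subdirectly irreducible homomorphic image of $\mathcal{K}$ and is therefore itself a Kleene algebra. Concretely this yields an injective homomorphism $e \colon \mathcal{K} \to \prod_{i \in I} \mathcal{K}_i$ for which every projection $\pi_i \circ e$ onto $\mathcal{K}_i$ is surjective.

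Next I would invoke the stated fact (cited to \cite{Balbes74}) that $\textbf{1}, \textbf{2}, \textbf{3}$ are the \emph{only} subdirectly irreducible Kleene algebras, so each factor $\mathcal{K}_i$ is isomorphic to one of these three. The key observation is then that each of $\textbf{1}, \textbf{2}, \textbf{3}$ embeds into $\textbf{3}$: the map $\textbf{1} \to \textbf{3}$ sends the unique self-complementary element to the fixed point $a$ of $\textbf{3}$; the map $\textbf{2} \to \textbf{3}$ sends $0 \mapsto 0$ and $1 \mapsto 1$; and $\textbf{3}$ embeds into itself via the identity. One checks directly that each of these maps preserves $\vee, \wedge, \sim, 0, 1$ and is injective, hence is a Kleene embedding. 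Fixing such an embedding $j_i \colon \mathcal{K}_i \hookrightarrow \textbf{3}$ for every $i \in I$, the coordinatewise map $\langle j_i \rangle_{i \in I} \colon \prod_{i \in I} \mathcal{K}_i \to \textbf{3}^{I}$ is again an injective homomorphism, and the composite $\langle j_i \rangle_{i \in I} \circ e \colon \mathcal{K} \to \textbf{3}^{I}$ is the desired embedding.

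The genuinely hard part is the classification of the subdirectly irreducible Kleene algebras, which I take as given. Were one to prove it, the route would be to exploit that the variety of Kleene algebras is congruence distributive, being lattice-based, and to analyze the congruence lattice of a subdirectly irreducible $\mathcal{K}$: the existence of a least nontrivial congruence (the monolith) together with the De Morgan and Kleene properties forces the structure to collapse to $\textbf{2}$ or $\textbf{3}$ (with $\textbf{1}$ the degenerate one-element case). Given this classification, the assembly described above is entirely routine, so the substance of the argument lives in the quoted result rather than in the present construction.
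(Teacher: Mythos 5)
Your overall route is exactly the one the paper has in mind: the paper gives no proof of Theorem \ref{thm2.2} at all, merely citing \cite{Balbes74} and remarking that it follows from the fact that $\textbf{1},\textbf{2},\textbf{3}$ are the only subdirectly irreducible Kleene algebras, so your assembly via Birkhoff's subdirect representation theorem (using the equational rendering of the Kleene inequality, which is correct) plus coordinatewise embeddings into $\textbf{3}$ is the intended argument.

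There is, however, one concrete error in your assembly step: the trivial algebra $\textbf{1}$ does \emph{not} embed into $\textbf{3}$. A Kleene homomorphism must preserve the constants $0$ and $1$; in $\textbf{1}$ the unique element satisfies $0 = 1$, so any homomorphic image of $\textbf{1}$ in a nontrivial algebra would force $0 = 1$ there, which fails in $\textbf{3}$. Your proposed map sending the self-complementary element to the fixed point $a$ preserves $\vee,\wedge,\sim$ but not the bounds, so your claim that it ``preserves $\vee, \wedge, \sim, 0, 1$'' is false, and the coordinatewise map $\langle j_i\rangle$ is not defined on coordinates isomorphic to $\textbf{1}$. The gap is easily repaired: if $\mathcal{K}$ is nontrivial, the subdirect factors arising from Birkhoff's theorem are nontrivial quotients of $\mathcal{K}$ (equivalently, one may simply delete any trivial factors from the product, which does not affect injectivity of the composite, since trivial coordinates separate no points), so every factor is $\textbf{2}$ or $\textbf{3}$ and your two remaining embeddings $j_i$ suffice; if $\mathcal{K}$ itself is trivial, take $I = \emptyset$, as $\textbf{3}^{\emptyset}$ is the one-element algebra. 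With that patch the proof is complete and coincides with the paper's intended justification.
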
 
So, to prove the Theorem \ref{thm2.1}, we prove the following.
\begin{theorem} \label{thm2.3}
For the Kleene algebra {\rm$\textbf{3}^{I}$} correponding to any index set $I$, there exists 
a Boolean algebra {\rm$B_{\textbf{3}^{I}}$} such that 
{\rm$\textbf{3}^{I} \cong \mathcal{(B_{\textbf{3}^{I}})}^{[2]}$}.

\end{theorem}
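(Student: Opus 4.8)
The plan is to exhibit a concrete Boolean algebra together with a concrete isomorphism, rather than argue purely abstractly. The natural candidate is $B_{\textbf{3}^{I}} := \textbf{2}^{I}$, the $I$-fold power of the two-element Boolean algebra $\textbf{2} = (\{0,1\},\vee,\wedge,{}^{c},0,1)$; this is a Boolean algebra because products of Boolean algebras are Boolean algebras. The key preliminary observation is that the three-element chain $\textbf{3} = \{0,a,1\}$ is itself of the form $\textbf{2}^{[2]}$: under the assignment $0 \mapsto (0,0)$, $a \mapsto (0,1)$, $1 \mapsto (1,1)$ one checks from the definitions that $\sim(0,0) = (1,1)$, $\sim(0,1) = (0,1)$ and $\sim(1,1) = (0,0)$, matching $\sim 0 = 1$, $\sim a = a$, $\sim 1 = 0$, so that $\textbf{3} \cong \textbf{2}^{[2]}$ as Kleene algebras (Proposition \ref{prop1}).

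First I would define the map $\phi : \textbf{3}^{I} \to (\textbf{2}^{I})^{[2]}$ directly, sending $p \in \textbf{3}^{I}$ to the pair $(f_{p}, g_{p})$, where $f_{p}, g_{p} \in \textbf{2}^{I}$ are given pointwise by $f_{p}(i) = 1 \Leftrightarrow p(i) = 1$ and $g_{p}(i) = 1 \Leftrightarrow p(i) \neq 0$. Since $p(i) = 1$ forces $p(i) \neq 0$, we have $f_{p} \leq g_{p}$ in $\textbf{2}^{I}$, so $(f_{p}, g_{p})$ indeed lies in $(\textbf{2}^{I})^{[2]}$. The map $\phi$ is a bijection, since $p$ is recovered from $(f_{p}, g_{p})$ coordinatewise (value $1$ where $f_{p}(i) = 1$, value $a$ where $f_{p}(i) = 0$ but $g_{p}(i) = 1$, value $0$ where $g_{p}(i) = 0$), and conversely every pair with $f \leq g$ arises in this way.

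Then I would verify that $\phi$ is a Kleene-algebra homomorphism by checking the operations pointwise, thereby reducing everything to the base case $\textbf{3} \cong \textbf{2}^{[2]}$. For joins and meets this is immediate, since the pointwise $\max$ and $\min$ on $\textbf{3}$ correspond to the coordinatewise $\vee$ and $\wedge$ on pairs. The clause requiring care is negation: one must confirm $\phi(\sim p) = \sim\phi(p) = (g_{p}^{\,c}, f_{p}^{\,c})$, i.e. that $f_{\sim p} = g_{p}^{\,c}$ and $g_{\sim p} = f_{p}^{\,c}$. This unwinds to the equivalences $(\sim p)(i) = 1 \Leftrightarrow p(i) = 0$ and $(\sim p)(i) \neq 0 \Leftrightarrow p(i) \neq 1$, which hold because $\sim$ is the order-reversing involution on the chain $\textbf{3}$. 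The bounds $(0,0)$ and $(1,1)$ are preserved by inspection.

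Conceptually, the whole argument amounts to the statement that the $(-)^{[2]}$ construction commutes with arbitrary powers, $(\textbf{2}^{[2]})^{I} \cong (\textbf{2}^{I})^{[2]}$, combined with $\textbf{3} \cong \textbf{2}^{[2]}$; setting $B_{\textbf{3}^{I}} = \textbf{2}^{I}$ then yields $\textbf{3}^{I} \cong (\textbf{2}^{I})^{[2]}$. I expect the main (though routine) obstacle to be exactly the negation clause: because $\sim$ on $\textbf{2}^{[2]}$ is defined by swapping coordinates and complementing, one has to check that the coordinatewise Boolean complement together with the swap faithfully reproduces the pointwise Kleene negation on $\textbf{3}^{I}$. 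Every other part of the verification is a direct transcription of the pointwise structure.
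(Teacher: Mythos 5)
Your proof is correct, and it takes a genuinely different route from the paper's. The paper never writes down an explicit map on elements of $\textbf{3}^{I}$: instead it characterizes the completely join irreducible elements of $\textbf{3}^{I}$ and of $(\textbf{2}^{I})^{[2]}$ (Section \ref{cji}), proves their join density, invokes Theorem \ref{thm2.7} to conclude both lattices are completely distributive and algebraic, constructs an order isomorphism $\phi$ between the two sets of join irreducibles (Theorem \ref{join irr}), extends it to a lattice isomorphism by Birkhoff's extension lemma (Lemma \ref{lemma2.1}), and finally handles the negation via J\"{a}rvinen's $j^{*}$ operation, verifying $\phi(j^{*})=\phi(j)^{*}$ on the join irreducibles (Lemma \ref{lemma2.2}, Theorem \ref{thm2.10}). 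Your coordinatewise map $p \mapsto (f_{p},g_{p})$ short-circuits all of this: it is an explicit bijection whose homomorphism properties reduce, exactly as you say, to the base isomorphism $\textbf{3}\cong\textbf{2}^{[2]}$ together with the observation that $(-)^{[2]}$ commutes with powers, and your negation check ($f_{\sim p}=g_{p}^{c}$, $g_{\sim p}=f_{p}^{c}$, using that $\sim$ acts pointwise and reverses the chain $\textbf{3}$) is precisely the only clause needing care and does go through. One small point you gloss over, which the paper also handles with only a remark: a lattice-plus-$\sim$ isomorphism between two Kleene algebras is automatically a Kleene isomorphism, since the Kleene property is an equationally definable condition in the same signature, so nothing further need be verified. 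What the paper's heavier machinery buys is reusability: the join-irreducible analysis (Proposition \ref{prop3}) and Lemma \ref{lemma2.1} are invoked again in the rough set representation (Theorem \ref{thm12}), whereas your argument, while shorter and more transparent for the theorem at hand, is specific to the power construction $\textbf{3}^{I}$.
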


\subsection{Completely join irreducible elements of $\textbf{3}^{I}$ and $(\textbf{2}^{I})^{[2]}$}\label{cji}

Completely join irreducible elements play a fundamental role in establishing  isomorphisms between  lattice-based algebras. Let us put the basic definitions and notations in place.
\begin{definition}\label{cjijd} Let  $\mathcal{L}: =
(L,\vee,\wedge,0,1)$ be a complete lattice. \blr \item An element $a
\in L$ is said to be {\rm completely join irreducible}, when $a = \vee
a_{i}$ implies that $a = a_{i}$ for some $i$. 
 \begin{notation}\label{not1} Let
$\mathcal{J}_{L}$ denote the set of all   completely
join irreducible elements of $L$, and  $J(x) := \{a \in \mathcal{J}_{L}: a \leq x\}$, for any $x \in L$. \end{notation} 

\item A set $S$ is said to be {\rm join
dense} in $\mc L$, provided every element of $L$ is the join of some
elements from $S$. \elr
\end{definition}
\noindent Observe that the element $0 $ is always completely join irreducible.

 For a given index set $I$, let us characterize the sets of completely join irreducible elements of the Kleene algebras 
$\textbf{3}^{I}$ and $(\textbf{2}^{I})^{[2]}$, and prove their join density in the respective lattices.  
\noindent Let $i,k \in I$. Denote by  $f_{i}^{x},~x \in \textbf{3}:=\{0,a,1\},$ %$f_{i}^{a},~f_{i}^{1}, $ 
the following element in $\textbf{3}^{I}$. \vskip 2pt
\hspace*{2 cm}$f_{i}^{x}(k)  :=
\left\{
	\begin{array}{ll}
		x  & \mbox{if } k = i \\
		0 & otherwise
	\end{array}
\right.$  

%\hspace*{2 cm}$f_{i}^{a}(k)  :=
%\left\{
%	\begin{array}{ll}
%		a  & \mbox{if } k = i \\
%		0 & otherwise
%	\end{array}
%\right.$ \\ 
%%and $f_{i}^{1} \in \textbf{3}^{I}$ such that \\
%
%\hspace*{1.5 cm}$f_{i}^{1}(k)  :=
%\left\{
%	\begin{array}{ll}
%		1  & \mbox{if } k = i \\
%		0 & otherwise
%	\end{array}
%\right.$\\
%Then we have, 
 %\hspace*{2 cm} $f_{i}^{a}(k) = a$ if $k = i$ and 0 otherwise.
%\noindent Let $f \in \textbf{3}^{I}$ such that $f  = \vee_{j \in J} f_{j}$. For $f$ to be completely join irreducible element of $\textbf{3}^{I}$, $f = f_{j}$ for some $j \in J$, i.e., $f(i) = f_{j}(i)$ for all $i \in I$. In other words, further $f(i)$ can not be broken. 

\begin{proposition}
The set of completely join irreducible  elements of {\rm $\textbf{3}^{I}$}  is given by:\\
%\hspace*{2 cm}$\mathcal{J}_{\textbf{3}^{I}} = \{f_{i}^{a},f_{i}^{1}: i \in I\}$.\vskip 2pt
\hspace*{2 cm}{\rm$\mathcal{J}_{\textbf{3}^{I}} = \{f_{i}^{x}: i \in I, x \in  \textbf{3} \}$}.\vskip 2pt
\noindent Moreover, {\rm$\mathcal{J}_{\textbf{3}^{I}}$} 
is join dense in {\rm$\textbf{3}^{I}$}.
\end{proposition}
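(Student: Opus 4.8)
The plan is to reduce everything to one elementary observation about products: in $\textbf{3}^{I}$ joins are computed coordinatewise, so for any $g \in \textbf{3}^{I}$ one has the canonical decomposition
\[
g = \bigvee_{i \in I} f_{i}^{g(i)},
\]
since at each coordinate $k$ the right-hand side evaluates to $g(k) \vee \bigvee_{i \neq k} 0 = g(k)$. This single identity already settles join density: every element is exhibited as a join of elements of the form $f_{i}^{x}$, so $\{f_{i}^{x}: i \in I, x \in \textbf{3}\}$ is join dense in $\textbf{3}^{I}$.

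For the characterization I would prove the two inclusions separately. To see that each $f_{i}^{x}$ is completely join irreducible, the case $x = 0$ is immediate, since $f_{i}^{0} = 0$ and $0$ is completely join irreducible by the observation above. For $x \in \{a,1\}$, suppose $f_{i}^{x} = \bigvee_{j} g_{j}$. Reading this coordinatewise, at every $k \neq i$ the join is $0$, which forces $g_{j}(k) = 0$ for all $j$; at coordinate $i$ the join equals $x$. Here I would invoke that $\textbf{3}$ is a finite chain, so that the supremum $\bigvee_{j} g_{j}(i) = x$ is actually attained: there is some $j_{0}$ with $g_{j_{0}}(i) = x$ (and, when $x = a$, no $g_{j}(i)$ exceeds $a$). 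For that $j_{0}$ one gets $g_{j_{0}} = f_{i}^{x}$, so $f_{i}^{x}$ coincides with one of the joinands, which is exactly complete join irreducibility.

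For the reverse inclusion, let $g \in \mathcal{J}_{\textbf{3}^{I}}$. Applying the canonical decomposition together with the definition of complete join irreducibility, the equality $g = \bigvee_{i} f_{i}^{g(i)}$ forces $g = f_{i}^{g(i)}$ for some $i \in I$, so $g$ has the required form $f_{i}^{x}$ with $x = g(i)$. Combining the two inclusions yields $\mathcal{J}_{\textbf{3}^{I}} = \{f_{i}^{x}: i \in I, x \in \textbf{3}\}$.

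The computations are routine; the only point deserving care is the passage through possibly infinite complete joins, where finiteness of $\textbf{3}$ must be used to guarantee that the coordinatewise suprema are attained rather than merely approximated. This is precisely what makes the joinand $g_{j_{0}}$ available in the first inclusion, and it is the crux on which \emph{complete} join irreducibility, as opposed to ordinary finite join irreducibility, turns.
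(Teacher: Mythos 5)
Your proposal is correct and takes essentially the same route as the paper: the same canonical coordinatewise decomposition $g = \bigvee_{i \in I} f_{i}^{g(i)}$ for join density, and the same coordinate-at-$i$ attainment argument for complete join irreducibility (the paper phrases attainment via join irreducibility of $a$ in $\textbf{3}$, you via finiteness of the chain --- the same fact). The only difference is that you explicitly derive the reverse inclusion $\mathcal{J}_{\textbf{3}^{I}} \subseteq \{f_{i}^{x}: i \in I, x \in \textbf{3}\}$ from the decomposition, a step the paper leaves implicit, which is a small but genuine improvement in completeness.
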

\begin{proof}
We only consider the non-zero elements. Let $f_{i}^{a} = \vee_{k \in K}f_{k},~K \subseteq I$. This implies that $f_{i}^{a}(j) = \vee_{k \in K}f_{k}(j)$, for each $j \in I$.
If $j\neq i$, by the definition of $f_{i}^{a}$,  $f_{i}^{a}(j) = 0$. 
So $\vee_{k \in K}f_{k}(j) = 0$, whence $f_{k}(j) = 0,$ for each $k \in K$. If $j = i$, then $f_{i}^{a}(j) = a$, which means $\vee_{k \in K}f_{k}(j) = a$. But as $a$ is join irreducible in $\textbf{3}$,  there exists a $k^{\prime} \in K$ such that $f_{k^{\prime}}(j) = a$. Hence $f_{i}^{a} = f_{k^{\prime}}$. A similar argument works for $f_{i}^{1}$. \\
\noindent Now let $f (\neq 0) \in \textbf{3}^{I}$. Take $K  = I$, and for each $j \in I$, define the element $f_{j}$  of $\textbf{3}^{I}$ as\\
\hspace*{2 cm}$f_{j}(k)  :=
\left\{
	\begin{array}{ll}
		f(j)  & \mbox{if } k = j \\
		0 & otherwise
	\end{array}
\right.$ \\
Clearly, we have $f = \vee_{j \in I}f_{j}$, where $f_j \in \mathcal{J}_{\textbf{3}^{I}}$.
\qed
\end{proof}
\noindent Let us note that for each $i,j \in I$, $f_{i}^{a} \leq f_{i}^{1}$, and if $i \neq j$,
neither $f_{i}^{x} \leq f_{j}^{y}$ nor $f_{j}^{x} \leq f_{i}^{y}$ holds for $x,y \in \{a,1\}$.
The order structure of the non-zero elements in $\mathcal{J}_{\textbf{3}^{I}}$ can be visualized by Figure \ref{fig1}:  
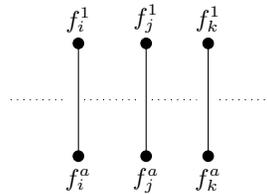
\begin{figure}[h] \label{fig1}
\begin{tikzpicture}[scale=.75]
    \tikzstyle{every node}=[draw,circle,fill=black,minimum size=4pt,
                            inner sep=0pt]

    \draw [dotted] (0,1) -- (0,1);
    \draw [dotted] (6,1) -- (7,1);
    
    \draw (7.2,0) -- (7.2,2);
		\draw (7.2,0) node [label=below:{\it $f_{i}^{a}$}]{};
		\draw (7.2,2) node [label=above:{\it $f_{i}^{1}$}]{};
		\draw [dotted] (7.3,1) -- (8.3,1);
		\draw (8.4,0) -- (8.4,2);
		\draw (8.4,0) node [label=below:{\it $f_{j}^{a}$}]{};
		\draw (8.4,2) node [label=above:{\it $f_{j}^{1}$}]{};
		\draw [dotted] (8.5,1) -- (9.3,1);
		\draw (9.5,0) -- (9.5,2);
		\draw (9.5,0) node [label=below:{\it $f_{k}^{a}$}]{};
		\draw (9.5,2) node [label=above:{\it $f_{k}^{1}$}]{};
		\draw [dotted] (9.7,1) -- (10.7,1);

\end{tikzpicture}
\caption{Hasse diagram of $\mathcal{J}_{\textbf{3}^{I}}$}
\label{fig1}
\end{figure}

\begin{example}
Let us consider the Kleene algebra $\textbf{3}^{3}$. The set $\mathcal{J}_{\textbf{3}^{3}}$ of completely join irreducible  elements of $\textbf{3}^{3}$ is then given by\\
\noindent $\mathcal{J}_{\textbf{3}^{3}} = \{ (0,0,0), f_{1}^{a} := (a,0,0), f_{1}^{1} := (1,0,0), f_{2}^{a} := 
(0,a,0), f_{2}^{1} := (0,1,0), f_{3}^{a} := (0,0,a), f_{3}^{1} := (0,0,1)\}$.

\noindent Let $f := (0,a,1) \in \textbf{3}^{3}$. Then $f =  f_{1} \vee f_{2} \vee f_{3}$, where  $f_{1} = (0,0,0),~f_{2} = (0,a,0)$
and $f_{3} = (0,0,1)$. 
\end{example} 

\noindent As any complete atomic Boolean algebra is isomorphic to $\textbf{2}^{I}$ for some index set $I$, henceforth, we shall identify any complete atomic Boolean algebra $B$ with $\textbf{2}^{I}$. 
Now, for any such algebra, $B^{[2]}$ is a Kleene algebra (cf. Proposition \ref{prop1}); in fact, it is a completely
distributive Kleene algebra.

\begin{proposition}\label{prop3}
Let $B$ be a complete atomic Boolean algebra. The set of completely join irreducible  elements  of $B^{[2]}$ is given by\\
\hspace*{2 cm} $\mathcal{J}_{B^{[2]}} = \{(0,a),(a,a): a \in \mathcal{J}_{B}\}$.\vskip 2pt 
\noindent Moreover, $\mathcal{J}_{B^{[2]}}$ is join dense in $B^{[2]}$. 
\end{proposition}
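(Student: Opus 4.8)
The plan is to transfer complete join irreducibility from $B$ to $B^{[2]}$ through the componentwise order, and to read off join density from a single explicit decomposition. First I would fix the completely join irreducible elements of $B$ itself: identifying $B$ with $\textbf{2}^{I}$, the set $\mathcal{J}_{B}$ consists of $0$ together with the atoms of $B$. Indeed, an atom $a$ is completely join irreducible because every $x_{i}$ occurring in $a = \bigvee_{i} x_{i}$ lies below $a$, hence equals $0$ or $a$, and they cannot all be $0$; and $0$ is completely join irreducible by the convention already recorded. I would also note at the outset that the order on $B^{[2]}$ is componentwise, i.e. $(x,y) \leq (c,d)$ iff $x \leq c$ and $y \leq d$, which is immediate from the definition of $\vee$.

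For the inclusion $\{(0,a),(a,a): a \in \mathcal{J}_{B}\} \subseteq \mathcal{J}_{B^{[2]}}$ I would verify each candidate directly. If $(a,a) = \bigvee_{i}(x_{i},y_{i})$ with $a$ an atom, then comparing first coordinates gives $a = \bigvee_{i} x_{i}$, so complete join irreducibility of $a$ in $B$ yields an index $i$ with $x_{i} = a$; since $(x_{i},y_{i}) \leq (a,a)$ forces $y_{i} \leq a$ while $a = x_{i} \leq y_{i}$, we get $(x_{i},y_{i}) = (a,a)$. The element $(0,a)$ is handled symmetrically through the second coordinate (the first coordinates all collapse to $0$), and $(0,0)$ is the bottom of $B^{[2]}$, hence completely join irreducible.

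The reverse inclusion and join density would then both follow from the key step, the decomposition
\[
(x,y) = \Bigl(\bigvee_{a \in J(x)} (a,a)\Bigr) \vee \Bigl(\bigvee_{b \in J(y)} (0,b)\Bigr),
\]
where $J(x)$ and $J(y)$ are the sets of completely join irreducible elements of $B$ below $x$ and $y$ (namely $0$ together with the atoms beneath them). Computing coordinatewise, the first coordinate is $\bigvee_{a \in J(x)} a = x$ by atomicity, and the second is $\bigl(\bigvee_{a \in J(x)} a\bigr) \vee \bigl(\bigvee_{b \in J(y)} b\bigr) = x \vee y = y$ since $x \leq y$. As every joinand lies in $\{(0,a),(a,a): a \in \mathcal{J}_{B}\}$, this already establishes join density. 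Moreover, if $(x,y)$ is itself completely join irreducible, it must coincide with one of these joinands, so $(x,y) = (a,a)$ or $(x,y) = (0,b)$ for some $a,b \in \mathcal{J}_{B}$, which gives $\mathcal{J}_{B^{[2]}} \subseteq \{(0,a),(a,a): a \in \mathcal{J}_{B}\}$ and completes the characterization.

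I do not expect a genuine obstacle here, as the argument is a routine transfer of complete join irreducibility across the componentwise order; the only point requiring care is the bookkeeping around $0$. One must check that the representatives genuinely sit in $B^{[2]}$ (they do, since $0 \leq a$ and $a \leq a$), and that $(0,0)$ is counted exactly once, so that the stated set coincides with $\mathcal{J}_{B^{[2]}}$ with no spurious or missing elements.
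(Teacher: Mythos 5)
Your proposal is correct and follows essentially the same route as the paper: the same first-coordinate argument showing $(a,a)$ (and, via the second coordinate, $(0,a)$) is completely join irreducible, and the identical decomposition $(x,y) = \bigvee_{a \in J(x)}(a,a) \vee \bigvee_{b \in J(y)}(0,b)$ for join density. The only difference is that you spell out details the paper leaves implicit --- the identification of $\mathcal{J}_{B}$ with the atoms (plus $0$), the symmetric case for $(0,a)$, and the reverse inclusion $\mathcal{J}_{B^{[2]}} \subseteq \{(0,a),(a,a): a \in \mathcal{J}_{B}\}$ deduced from the decomposition --- all of which are sound.
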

\begin{proof}
Let $a \in \mathcal{J}_{B}$ and let $(a,a) = \vee_{k \in K}(x_{k},y_{k}),~K \subseteq I$, 
where $(x_{k},y_{k}) \in B^{[2]}$ for each $k \in K$. $(a,a) = \vee_{k \in K}(x_{k},y_{k})$ implies $a = \vee_{k}x_{k}$. As
$a \in \mathcal{J}_{B}$,  $a = x_{k^{\prime}}$ for some $k^{\prime} \in K$. 
We already have $x_{k^{\prime}} \leq y_{k^{\prime}} \leq a$, hence combining with $a = x_{k^{\prime}}$, we get $(a,a) = (x_{k^{\prime}},y_{k^{\prime}})$.
With similar arguments one can show that for each $a \in \mathcal{J}_{B}$, $(0,a)$ is completely join irreducible.

Now, let $(x,y) (\neq (0,0)) \in B^{[2]}$. Consider the sets $J(x),~J(y)$ (cf. Notation \ref{not1}, Definition \ref{cjijd}). %Let $X = \{a \in \mathcal{J}_{B}: a \leq x\}$ and $Y = \{b \in \mathcal{J}_{B}: b \leq y\}$.
 Then $(x,y) = \vee_{a \in J(x)}(a,a)~ \vee~ \vee_{b \in J(y)}(0,b)$. Hence $\mathcal{J}_{B^{[2]}}$ is join dense in $B^{[2]}$.\qed
\end{proof}

For $a, b ~(\neq 0) \in \mathcal{J}_{B}$, $(0,a) \leq (a,a)$, and if $a \neq b, ~x,y \in \{a,b\}$ with $x \neq y$, neither $(0,x) \leq (0,y),(y,y)$ nor $(x,x) \leq (0,y),(y,y)$
holds. % for $x,y \in \{a,b\}$. 
Then, similar to the case of $\textbf{3}^{I}$, the completely join irreducible non-zero elements of $B^{[2]}$ can be visualized by Figure \ref{fig2}:
\begin{figure}[h] %\label{fig2}
\begin{tikzpicture}[scale=.75]
    \tikzstyle{every node}=[draw,circle,fill=black,minimum size=4pt,
                            inner sep=0pt]

    \draw [dotted] (0,1) -- (0,1);
    \draw [dotted] (6,1) -- (7,1);
  
    \draw (7.5,0) -- (7.5,2);
		\draw (7.5,0) node [label=below:{\it $(0,a_{i})$}]{};
		\draw (7.5,2) node [label=above:{\it $(a_{i},a_{i})$}]{};
		\draw [dotted] (7.8,1) -- (8.5,1);
		\draw (8.8,0) -- (8.8,2);
		\draw (8.8,0) node [label=below:{\it $(0,a_{j})$}]{};
		\draw (8.8,2) node [label=above:{\it $(a_{j},a_{j})$}]{};
		\draw [dotted] (9,1) -- (10,1);
		\draw (10.2,0) -- (10.2,2);
		\draw (10.2,0) node [label=below:{\it $(0,a_{k})$}]{};
		\draw (10.2,2) node [label=above:{\it $(a_{k},a_{k})$}]{};
		\draw [dotted] (10.4,1) -- (11,1);
		
\end{tikzpicture}
\caption{Hasse diagram of $\mathcal{J}_{B^{[2]}}$}
\label{fig2}
\end{figure}
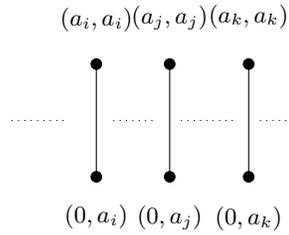

\begin{example}
Consider the Boolean algebra $\textbf{4}$  of four elements with atoms $a$ and $b$. The set of completely join irreducible  elements of $\textbf{4}^{[2]}$ is given by\\
\noindent $\mathcal{J}_{\textbf{4}^{[2]}} = \{(0,0),(0,a),(a,a),(0,b),(b,b)\}$. 

\noindent Let $(a,1) \in \textbf{4}^{[2]}$. Then   $J(a) = \{0,a\}$ and $J(1) = \{0,a,b\}$. Hence\\ $(a,1) = (0,0) \vee (a,a) \vee (0,a) \vee (0,b)$. 

\end{example}
% ================================================================================================================

\subsection{Structural theorem for Kleene algebras}

Let us first present the basic lattice-theoretic  definitions and results that will be required to arrive at the proof of Theorem \ref{thm2.3}.
\begin{definition} \label{Def3}\noindent 
\begin{enumerate}
\item 
\begin{enumerate}
\item A {\rm complete lattice of sets} is a family $\mathcal{F}$ such that $\bigcup \mathcal{H}$ and $\bigcap \mathcal{H}$
belong to $\mathcal{F}$ for any $\mathcal{H} \subseteq \mathcal{F}$.
\end{enumerate}
\item Let $L$ be a complete lattice.
\begin{enumerate}
\item $L$ is said to be {\rm algebraic} if any element $x \in L$
is the join of a set of compact elements of $L$.

\item $L$ is said to  satisfy the {\rm Join-Infinite Distributive Law}, if  for any subset $\{y_{j}\}_{j \in J}$ of $L$ and any $x \in L$,\\
$$ (JID)~~ x \wedge \bigvee_{j \in J} y_{j} = \bigvee_{j \in J} x \wedge y_{j}.$$ %\hfill{} 
\end{enumerate}
\end{enumerate}
\end{definition}  

\begin{theorem} \label{thm2.7} \cite{davey}
Let $L$ be a lattice. The following are equivalent.
\begin{enumerate}
%\item $L$ is isomorphic to complete lattice of sets.

\item $L$ is complete,  satisfies $(JID)$ and the set of completely join irreducible elements is join dense in L.

\item $L$ is completely distributive and  algebraic.

\end{enumerate}
\end{theorem}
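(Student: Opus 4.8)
The plan is to prove the two implications separately, the engine for both directions being a single observation: under $(JID)$ a completely join irreducible element behaves like a \emph{completely join prime} element. Concretely, if $p$ is completely join irreducible and $p \leq \bigvee S$ for some $S \subseteq L$, then by $(JID)$ we have $p = p \wedge \bigvee S = \bigvee_{s \in S}(p \wedge s)$, and complete join irreducibility forces $p = p \wedge s$, i.e.\ $p \leq s$, for some $s \in S$. Thus each completely join irreducible element is compact (in fact super-compact), and the defect in any join $\bigvee S$ can always be located at a single member of $S$. I expect this lemma to carry most of the weight.

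For (1) $\Rightarrow$ (2): Algebraicity is then immediate, since the completely join irreducible elements are compact by the lemma and are join dense by hypothesis, so every element is a join of compact elements. For complete distributivity it suffices to verify the nontrivial inequality $\bigwedge_{i} \bigvee_{j} x_{i,j} \leq \bigvee_{f} \bigwedge_{i} x_{i,f(i)}$ (the reverse always holds in a complete lattice). By join density it is enough to show that every completely join irreducible $p$ below the left-hand side lies below the right-hand side. For such $p$ we have $p \leq \bigvee_{j} x_{i,j}$ for every $i$, so the lemma yields a choice function $f$ with $p \leq x_{i,f(i)}$ for all $i$, whence $p \leq \bigwedge_i x_{i,f(i)} \leq \bigvee_f \bigwedge_i x_{i,f(i)}$. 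Taking the join over all such $p$ and invoking join density closes this direction.

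For (2) $\Rightarrow$ (1): $L$ is complete by the very meaning of complete distributivity, and $(JID)$ is the special case of the complete distributive law in which all but one of the index families is a singleton, so these two requirements are routine. The substantive point is the join density of the completely join irreducible elements, and this is exactly where algebraicity is indispensable: complete distributivity alone does not suffice, as the unit interval $[0,1]$ shows, since it is completely distributive yet its only completely join irreducible element is $0$. My plan is to use algebraicity to reduce to the compact elements, which are join dense, and then to show that each compact element is a join of completely join irreducible elements. Here complete distributivity is used to guarantee that a suitably chosen minimal element below a compact element is completely join irreducible: since the lattice is distributive, completely join irreducible and completely join prime coincide, and the totally-below relation furnished by complete distributivity interpolates, so the compactness of the element being decomposed can be leveraged to manufacture an irreducible element separating it from any strictly smaller element.

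I expect the main obstacle to be precisely this last step, namely producing enough completely join irreducible elements out of the interaction of algebraicity and complete distributivity. The delicate part is arranging the minimal-element (Zorn-type) argument so that it applies: a descending chain of candidate separating elements need not have a meet that remains separating, and it is the compactness of the element being decomposed, together with $(JID)$, that must be used both to control this and to ensure that the minimal elements finally obtained are genuinely completely join irreducible rather than merely minimal.
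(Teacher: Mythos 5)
Your direction $(1)\Rightarrow(2)$ is correct and complete: the lemma that $(JID)$ upgrades completely join irreducible elements to completely join \emph{prime} ones is the right engine, and it delivers both algebraicity (primes are super-compact, hence compact, and they are join dense by hypothesis) and the nontrivial inequality of complete distributivity via a choice function, exactly as you argue. (One small caveat: your aside that ``since the lattice is distributive, completely join irreducible and completely join prime coincide'' is false for merely distributive complete lattices --- the coincidence needs $(JID)$ or complete distributivity; it is harmless here since complete distributivity implies $(JID)$.) For comparison, the paper itself imports this theorem from \cite{davey} without proof, and the cited source routes both directions through the representation of $L$ as the lattice of down-sets of the poset of completely join irreducible elements.

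The genuine gap is in $(2)\Rightarrow(1)$, precisely where you flagged it, but the situation is worse than ``delicate'': your plan of running Zorn \emph{downwards} to extract a minimal element below a compact $k$ that separates $k$ from a smaller element cannot be repaired, because such minimal elements need not exist even where the theorem holds. Take $L$ to be the lattice of down-sets of an infinite descending chain $b_{1} > b_{2} > \cdots$: then $L$ is algebraic and completely distributive, $k = {\downarrow}b_{1}$ is compact, yet there is no minimal nonzero element below $k$ --- the completely join irreducible elements ${\downarrow}b_{i}$ descend strictly with meet $0$, which is exactly your feared descending chain losing the separating property in the limit. The workable argument runs Zorn in the opposite direction, where compactness actually helps: given compact $k \nleq y$, the join of an ascending chain of elements above $y$ but not above $k$ is again not above $k$ (by compactness), so there is $m$ maximal with $y \leq m$ and $k \nleq m$; maximality forces every $z \nleq m$ to satisfy $z \vee m \geq m \vee k > m$. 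Complete distributivity then enters through its dual Raney form (every element is the meet of the elements totally above it): if $p := \bigwedge\{z : z \nleq m\}$ were below $m$, then every $w$ totally above $m$ would lie above some $z \nleq m$ and above $m$, hence $w \geq z \vee m \geq m \vee k$, giving $m = \bigwedge\{w : w \mbox{ totally above } m\} \geq m \vee k > m$, a contradiction. So $p \nleq m$, making $p$ the \emph{least} element not below $m$, which is readily completely join prime, with $p \leq k \leq x$ and $p \nleq y$; join density follows. This intertwining of compactness (ascending Zorn) with the totally-above relation is the missing mechanism your sketch does not supply; alternatively one can follow Davey--Priestley and prove both directions by establishing $L \cong \mathcal{O}(P)$ directly.
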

\noindent It can be easily seen that both the lattices 
$\textbf{3}^{I}$ and $(\textbf{2}^{I})^{[2]}$ are complete and satisfy $(JID)$. We have already observed from  Section \ref{cji} that the sets of completely join irreducible elements of $\textbf{3}^{I}$ and $(\textbf{2}^{I})^{[2]}$ are join dense in the respective lattices. So   Theorem \ref{thm2.7}(1) holds for $\textbf{3}^{I}$ and $(\textbf{2}^{I})^{[2]}$, and  % satisfy all the properties of Definition \ref{Def3} and so Theorem \ref{thm2.7} holds for them. 
therefore, $\textbf{3}^{I}$ and $(\textbf{2}^{I})^{[2]}$ are completely distributive and algebraic lattices.\vskip 2pt

\begin{note} For the remaining study, let us fix an index set $I$. In the rest of our paper, we exclude $0$  from  the list of completely join irreducible elements and the lattice in the definition of join density, as this does not change the results.
\end{note} %\vskip 2pt

Let us recall from  Section \ref{cji} that  the completely join irreducible elements of $\textbf{3}^{I}$ and $(\textbf{2}^{I})^{[2]}$ are given by {\rm $\mathcal{J}_{\textbf{3}^{I}} = \{f_{i}^{a},f_{i}^{1}: i\in I\}$ and 
$\mathcal{J}_{(\textbf{2}^{I})^{[2]}} = \{(0,g_{i}^{1}),(g_{i}^{1},g_{i}^{1}):i \in I\}$}, 
where $g_{i}^{1}$'s are the atoms or non-zero completely join irreducible elements of the Boolean algebra 
$\textbf{2}^{I}$,  defined as the earlier $f_{i}^{1}$, with domain restricted to $\textbf{2}$. \vskip 2pt
\hspace*{2 cm}$g_{i}^{1}(k)  :=
\left\{
	\begin{array}{ll}
		1  & \mbox{if } k = i \\
		0 & otherwise
	\end{array}
\right.$

\begin{theorem}\label{join irr}
The  sets of completely join irreducible elements of 
 {\rm $\textbf{3}^{I}$} and {\rm $(\textbf{2}^{I})^{[2]}$} are order isomorphic.
\end{theorem}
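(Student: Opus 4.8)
The plan is to exhibit an explicit order isomorphism between the two posets of completely join irreducible elements, using the descriptions already established in Section~\ref{cji}. From the two preceding propositions (and the Note fixing the index set $I$ and excluding $0$), I have the concrete lists
$$\mathcal{J}_{\textbf{3}^{I}} = \{f_{i}^{a}, f_{i}^{1} : i \in I\}, \qquad \mathcal{J}_{(\textbf{2}^{I})^{[2]}} = \{(0,g_{i}^{1}), (g_{i}^{1},g_{i}^{1}) : i \in I\},$$
together with their order relations as displayed in Figures~\ref{fig1} and~\ref{fig2}. In each poset the nonzero elements split into $I$-indexed two-element chains, with no comparabilities across distinct indices. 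So the natural map to propose is
$$\varphi(f_{i}^{a}) := (0,g_{i}^{1}), \qquad \varphi(f_{i}^{1}) := (g_{i}^{1},g_{i}^{1}), \quad i \in I.$$

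First I would check that $\varphi$ is a well-defined bijection. Well-definedness and injectivity follow because the $f_{i}^{x}$ are pairwise distinct (different $i$ give elements supported at different coordinates, and $f_{i}^{a}\neq f_{i}^{1}$ since $a\neq 1$ in $\textbf{3}$), and likewise the target elements $(0,g_{i}^{1}),(g_{i}^{1},g_{i}^{1})$ are pairwise distinct in $(\textbf{2}^{I})^{[2]}$; surjectivity is immediate since every element of $\mathcal{J}_{(\textbf{2}^{I})^{[2]}}$ is hit by construction. This is routine bookkeeping.

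The substantive part is verifying that $\varphi$ is an order isomorphism, i.e.\ that $u \leq v$ in $\mathcal{J}_{\textbf{3}^{I}}$ if and only if $\varphi(u) \leq \varphi(v)$ in $\mathcal{J}_{(\textbf{2}^{I})^{[2]}}$. By the remark following Proposition~3 on the source side, the only nontrivial comparabilities are $f_{i}^{a} \leq f_{i}^{1}$ for each fixed $i$ (elements with distinct indices being incomparable, as noted after the first proposition). On the target side, the analogous remark gives that the only nontrivial comparabilities are $(0,g_{i}^{1}) \leq (g_{i}^{1},g_{i}^{1})$ for each fixed $i$. Since $\varphi$ sends the source chain $f_{i}^{a} \leq f_{i}^{1}$ exactly onto the target chain $(0,g_{i}^{1}) \leq (g_{i}^{1},g_{i}^{1})$, and sends incomparable pairs (distinct indices) to incomparable pairs, the equivalence $u\leq v \Leftrightarrow \varphi(u)\leq\varphi(v)$ holds in all cases. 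Hence $\varphi$ is an order isomorphism.

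I do not anticipate a genuine obstacle here: the theorem is essentially a repackaging of the two Hasse-diagram descriptions already computed, both of which are disjoint unions of $I$ many two-element chains. The only point demanding a little care is making the order-preservation and order-reflection argument exhaustive rather than merely checking the two chains---one must explicitly invoke the incomparability of cross-index pairs on both sides so that no comparability is missed. A cleaner packaging, if preferred, is to observe that each poset is isomorphic to $I \times \textbf{2}$ with the product (equivalently, coordinatewise) order restricted so that only same-index elements are compared; both descriptions yield the same abstract poset, and the isomorphism is the identity on the $I$-index together with the chain isomorphism $a \mapsto 0$-coordinate, $1 \mapsto$ top.
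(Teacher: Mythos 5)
Your proposal is correct and follows essentially the same route as the paper: you define the identical map $\varphi(f_{i}^{a}) = (0,g_{i}^{1})$, $\varphi(f_{i}^{1}) = (g_{i}^{1},g_{i}^{1})$ and verify order preservation, order reflection, and surjectivity by the same case analysis on same-index chains versus incomparable cross-index pairs. Your added bookkeeping on injectivity and the remark about exhaustiveness of the comparability check are fine refinements of, not departures from, the paper's argument.
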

\begin{proof}
We define the map 
{\rm $\phi: \mathcal{J}_{\textbf{3}^{I}} \rightarrow \mathcal{J}_{(\textbf{2}^{I})^{[2]}}$} as follows. For $i \in I$,\\
\hspace*{2 cm} $\phi(f_{i}^{a}) := (0,g_{i}^{1})$,\\
\hspace*{2 cm} $\phi(f_{i}^{1}) := (g_{i}^{1},g_{i}^{1})$.\vskip 2pt 
\noindent One can show that $\phi$ is an order isomorphism.
\bit
\item $f_{i}^{x} \leq f_{j}^{y}$ if and only if $i = j$ and $x,y = a$ or $x,y = 1$, or  $x = a, y = 1$. In any case, by definition of 
$\phi$, $\phi(f_{i}^{x}) \leq \phi(f_{j}^{y})$.

\item Let $\phi(f_{i}^{x}) \leq \phi(f_{j}^{y})$ and assume $\phi(f_{i}^{x}) = (g_{k}^{1},g_{l}^{1})$ and 
$\phi(f_{j}^{y}) = (g_{m}^{1},g_{n}^{1})$. But then again:  $k = l = m = n$ or $g_{k}^{1} = g_{m}^{1} = 0, l = n $ or $g_{k}^{1} = 0, l = m = n$.
Again, following the definition of $\phi$, we have $f_{i}^{x} \leq f_{j}^{y}$.

\item If $(0,g_{i}^{1}) \in \mathcal{J}_{(\textbf{2}^{I})^{[2]}}$, then $\phi(f_{i}^{a}) = (0,g_{i}^{1})$. Similarly 
for $(g_{i}^{1},g_{i}^{1})$. Hence $\phi$ is onto.\qed
\eit 
%\noindent Hence $\phi$ is an order isomorphism.
\end{proof}

\begin{lemma} \cite{BIRK95} \label{lemma2.1}
Let $L$ and $K$ be two completely distributive lattices. Further, let $\mathcal{J}_{L}$ and $\mathcal{J}_{K}$ be join dense in $L$ and
$K$ respectively. Let $\phi:\mathcal{J}_{L} \rightarrow \mathcal{J}_{K}$ be an order isomorphism. Then the extension map $\Phi: L \rightarrow K$ given by \\
\hspace*{2 cm} $\Phi(x) := \bigvee(\phi(J(x)))$ (where $J(x) := \{a \in \mathcal{J}_{L}: a \leq x\}$), $x \in L$,\\
is a lattice isomorphism.
\end{lemma}
% ===================================================================================================================================================================================
Using  Theorem \ref{join irr} and Lemma \ref{lemma2.1}, we have the following:
\begin{theorem} \label{thm2.8}
 The  algebras {\rm $\textbf{3}^{I}$} and {\rm $(\textbf{2}^{I})^{[2]}$}
are lattice isomorphic.
\end{theorem}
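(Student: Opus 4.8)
The plan is to exhibit the lattice isomorphism between $\textbf{3}^{I}$ and $(\textbf{2}^{I})^{[2]}$ by applying Lemma \ref{lemma2.1} directly, having assembled its three hypotheses from the earlier development. First I would record that both lattices are completely distributive: this was observed immediately after Theorem \ref{thm2.7}, since each is complete, satisfies $(JID)$, and has a join-dense set of completely join irreducible elements. Second, I would invoke the join density of $\mathcal{J}_{\textbf{3}^{I}}$ in $\textbf{3}^{I}$ and of $\mathcal{J}_{(\textbf{2}^{I})^{[2]}}$ in $(\textbf{2}^{I})^{[2]}$, both established in Section \ref{cji}. Third, I would take the order isomorphism $\phi:\mathcal{J}_{\textbf{3}^{I}} \rightarrow \mathcal{J}_{(\textbf{2}^{I})^{[2]}}$ furnished by Theorem \ref{join irr}.

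With these three ingredients in hand, the hypotheses of Lemma \ref{lemma2.1} are satisfied with $L := \textbf{3}^{I}$ and $K := (\textbf{2}^{I})^{[2]}$. The lemma then guarantees that the extension map $\Phi: \textbf{3}^{I} \rightarrow (\textbf{2}^{I})^{[2]}$ defined by $\Phi(x) := \bigvee(\phi(J(x)))$ is a lattice isomorphism, which is precisely the assertion of Theorem \ref{thm2.8}. So the proof is essentially a one-line citation: apply Lemma \ref{lemma2.1} to the isomorphism $\phi$ of Theorem \ref{join irr}.

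Since the statement here concerns only the \emph{lattice} structure, there is no real obstacle — all the heavy lifting was done in establishing Theorem \ref{join irr} (the bijective, order-preserving, order-reflecting correspondence between the two sets of join irreducibles) and in verifying complete distributivity via Theorem \ref{thm2.7}. The only point deserving care is checking that one is entitled to ignore $0$ when forming $J(x)$ and taking joins; but this is exactly the convention fixed in the \emph{note} following Theorem \ref{thm2.7}, where $0$ is excluded from the join irreducibles without affecting the conclusions. I would therefore write the proof as a brief appeal to Lemma \ref{lemma2.1}, noting in passing that the explicit map is $\Phi(x) = \bigvee \phi(J(x))$. I anticipate that the genuinely substantive work — and hence the real obstacle for the overall program — lies not in this lattice isomorphism but in the \emph{subsequent} step, namely upgrading $\Phi$ to an isomorphism of Kleene algebras by showing it commutes with the negation $\sim$; that, however, is beyond the present statement.
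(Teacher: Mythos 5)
Your proposal is correct and matches the paper exactly: the paper derives Theorem \ref{thm2.8} precisely by combining the order isomorphism $\phi$ of Theorem \ref{join irr} with the extension Lemma \ref{lemma2.1}, the hypotheses (complete distributivity and join density) having been verified via Theorem \ref{thm2.7} and Section \ref{cji}. Your additional remarks on the convention excluding $0$ and on the subsequent upgrade to a Kleene isomorphism are also consistent with the paper's development.
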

% =============================================================================================================================================================
\noindent Now, in order to obtain Theorem \ref{thm2.3},  we would like to extend the above lattice isomorphism to a  Kleene isomorphism. 
We use the technique of J\"{a}rvinen in \cite{JR11}. Let us present the preliminaries. \\
Let $\mathbb{K} := (K,\vee,\wedge,\sim,0,1)$ be a completely distributive De Morgan algebra. Define for any $j \in \mathcal{J}_{K}$,\\
\hspace*{3 cm} $j^{*} := \bigwedge \{x \in K: x \nleq  \sim j\}$.\\
Then $j^{*} \in \mathcal{J}_{K}$. For complete details on $j^{*}$, one may refer to \cite{JR11}. Further, it is shown that 
%J\"{a}rvinen in the same article  
Lemma \ref{lemma2.1} can be extended to  De Morgan algebras defined over  algebraic lattices. 

\begin{theorem} \label{thm2.10}
Let $\mathbb{L} := (L,\vee,\wedge,\sim,0,1)$ and $\mathbb{K} := (K,\vee,\wedge,\sim,0,1)$ be two De Morgan algebras defined on algebraic lattices. If $\phi: \mathcal{J}_{L} \rightarrow \mathcal{J}_{K}$ is an order isomorphism such that \\
\hspace*{3 cm} $\phi(j^{*}) = \phi(j)^{*},$ for all $j \in \mathcal{J}_{L}$,\\
 then $\Phi$ is an isomorphism between the algebras $\mathbb{L}$ and $\mathbb{K}$.

\end{theorem}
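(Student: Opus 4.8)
The plan is to extend the order isomorphism $\phi$ on completely join irreducible elements to the full map $\Phi(x) := \bigvee \phi(J(x))$, exactly as in Lemma \ref{lemma2.1}, and then verify that this extension respects the De Morgan negation $\sim$. Since Lemma \ref{lemma2.1} already guarantees that $\Phi$ is a lattice isomorphism (both $\mathbb{L}$ and $\mathbb{K}$ are completely distributive, being algebraic with join-dense $\mathcal{J}$), the lattice-theoretic content is fully in place. Thus the only genuinely new thing to check is the single equation $\Phi(\sim x) = \sim \Phi(x)$ for all $x \in L$; the bounds $0,1$ are preserved automatically by any lattice isomorphism.

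First I would reduce the negation condition to the generators. Because $\mathcal{J}_{L}$ is join dense, every $x \in L$ is $\bigvee J(x)$, and by the De Morgan laws together with complete distributivity, $\sim x = \sim \bigvee_{j \in J(x)} j = \bigwedge_{j \in J(x)} \sim j$. The role of the element $j^{*} := \bigwedge\{y \in K : y \nleq \sim j\}$ is precisely to capture $\sim j$ in terms of join irreducibles: one shows (this is the content imported from \cite{JR11}) that $J(\sim x)$ is determined by the operation $j \mapsto j^{*}$, so that negation on $L$ is encoded purely by the combinatorial data $(\mathcal{J}_{L}, \leq, {}^{*})$. The hypothesis $\phi(j^{*}) = \phi(j)^{*}$ says exactly that $\phi$ transports this encoding from $L$ to $K$.

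Concretely, I would argue that for each $j \in \mathcal{J}_{L}$ the element $j \mapsto j^{*}$ gives an order-reversing involution on $\mathcal{J}_{L}$ compatible with $\sim$, in the sense that $J(\sim x) = \{ j \in \mathcal{J}_{L} : j^{*} \nleq x \}$ (or the analogous characterization from \cite{JR11}). Applying $\phi$ and using both that $\phi$ is an order isomorphism and that $\phi(j^{*}) = \phi(j)^{*}$, I would obtain $\phi(J(\sim x)) = \{ \phi(j) : \phi(j)^{*} \nleq \Phi(x) \} = J_{K}(\sim \Phi(x))$. Taking joins and invoking join density in $K$ then yields $\Phi(\sim x) = \bigvee \phi(J(\sim x)) = \bigvee J_{K}(\sim \Phi(x)) = \sim \Phi(x)$, which is the desired identity.

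The main obstacle is the faithful characterization of $J(\sim x)$ in terms of $j^{*}$ and $\leq$ alone, i.e. showing that the star operation really does recover negation on all of $L$ and not merely on the join irreducibles. This is the step that genuinely uses the De Morgan axioms and the interchange of $\sim$ with infinite joins/meets (via complete distributivity on an algebraic lattice), and it is where the cited machinery of \cite{JR11} does the heavy lifting; once that characterization is granted, the transport under $\phi$ is a routine application of the hypothesis $\phi(j^{*}) = \phi(j)^{*}$.
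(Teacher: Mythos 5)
Note first that the paper itself contains no proof of this theorem: it is imported from J\"arvinen--Radeleczki \cite{JR11}, with only the remark that Lemma \ref{lemma2.1} ``can be extended'' to De Morgan algebras on algebraic lattices. So there is no in-paper argument to compare against; the relevant comparison is with \cite{JR11}, and your sketch is a correct reconstruction of that route. The characterization you defer to the citation is exactly the key lemma there: in a completely distributive lattice, for $j \in \mathcal{J}_{L}$ and $x \in L$ one has $j^{*} \leq x$ iff $x \nleq \sim j$, equivalently $j \leq \sim x$ iff $j^{*} \nleq x$, which yields precisely your formula $J(\sim x) = \{ j \in \mathcal{J}_{L} : j^{*} \nleq x \}$. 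Your transport step then works as stated, with one small point worth making explicit: you need $\Phi$ to restrict to $\phi$ on $\mathcal{J}_{L}$ (true, since $\phi(j')$ is the maximum of $\phi(J(j'))$, so $\Phi(j') = \bigvee \phi(J(j')) = \phi(j')$), which gives $j' \leq x \iff \phi(j') \leq \Phi(x)$ and lets the hypothesis $\phi(j^{*}) = \phi(j)^{*}$ convert $j^{*} \nleq x$ into $\phi(j)^{*} \nleq \Phi(x)$; join density in $K$ then finishes, as you say.

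One assertion you make too quickly: ``both $\mathbb{L}$ and $\mathbb{K}$ are completely distributive, being algebraic with join-dense $\mathcal{J}$.'' An algebraic lattice need not be completely distributive, and the theorem's hypothesis is only that the lattices are algebraic, so this needs an argument rather than a parenthesis. What rescues it is the De Morgan involution: $\sim$ makes each lattice self-dual, hence dually algebraic as well, and a distributive, doubly algebraic lattice is completely distributive; join density of the completely join irreducible elements then follows from Theorem \ref{thm2.7}. This is exactly the order in which \cite{JR11} sets things up, and with that step supplied your outline is essentially their proof.
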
 
%==============================================================================================================================================================================================
Let  $f_{i}^{a} \in \mathcal{J}_{\textbf{3}^{I}}$. Then by definition, 
$(f_{i}^{a})^{*} = \bigwedge \{f \in \textbf{3}^{I}: f \nleq \sim(f_{i}^{a})\}$, where for each $i \in I$, \\
\hspace*{2 cm}$\sim(f_{i}^{a})(k)  =
\left\{
	\begin{array}{ll}
		a  & \mbox{if } k = i \\
		1 & otherwise
	\end{array}
\right.$\\
Clearly, we have $f_{i}^{1} \nleq \sim(f_{i}^{a})$. Now let $f \nleq \sim(f_{i}^{a})$. Then what does $f$  look like?
If $k \neq i$, $f(k) \leq \sim(f_{i}^{a})(k) = 1$. So, for $f \nleq \sim(f_{i}^{a})$, $f(i)$ has to be $1$ (otherwise 
$f(i) = 0,a$ will lead to $f \leq \sim(f_{i}^{a})$). Hence,  $f_{i}^{1} \leq f$ 
and $(f_{i}^{a})^{*} = f_{i}^{1}$.\\
Similarly, one can easily show that $(f_{i}^{1})^{*} = f_{i}^{a}$.\vskip 2pt 
\noindent On the other hand, let us consider $(0,g_{i}^{1}) \in \mathcal{J}_{(\textbf{2}^{I})^{[2]}}$. Then, 
$(0,g_{i}^{1})^{*} = \bigwedge \{(g,g^{\prime}) \in (\textbf{2}^{I})^{[2]}: (g,g^{\prime}) \nleq \sim(0,g_{i}^{1})\}$. By definition of $\sim$, we have $\sim(0,g_{i}^{1}) = ((g_{i}^{1})^{c},0^{c}) = ((g_{i}^{1})^{c},1)$. 
 Observe that $(g_{i}^{1},g_{i}^{1}) \nleq ((g_{i}^{1})^{c},1)$, as, $g_{i}^{1} \nleq (g_{i}^{1})^{c}$ is true in a Boolean algebra. Now, let $(g,g^{\prime}) \in \mathcal{J}_{(\textbf{2}^{I})^{[2]}}$ be such that $(g,g^{\prime})  \nleq \sim(0,g_{i}^{1}) = ((g_{i}^{1})^{c},1)$. But we have $g^{\prime} \leq 1$, so 
for $(g,g^{\prime}) \nleq \sim(0,g_{i}^{1})$ to hold, we must have $g \nleq (g_{i}^{1})^{c}$.  $g_{i}^{1}$ is an atom of $\textbf{2}^{I}$ and $g \nleq (g_{i}^{1})^{c}$ imply $g_{i}^{1} \leq g$. Hence $(g_{i}^{1},g_{i}^{1}) \leq (g,g^{\prime})$, and we get
$(0,g_{i}^{1})^{*} = (g_{i}^{1},g_{i}^{1})$. Similarly, we have $(g_{i}^{1},g_{i}^{1})^{*} = (0,g_{i}^{1})$. Let us summarize these observations in the following lemma.
%===================================================================================================================================================================================
\begin{lemma} \label{lemma2.2}
The completely distributive De Morgan algebra $\textbf{3}^{I}$ has the following properties.  For each $i \in I$, {\rm $f_{i}^{a}, f_{i}^{1} \in \mathcal{J}_{\textbf{3}^{I}}$ and 
$(0,g_{i}^{1}), (g_{i}^{1},g_{i}^{1}) \in \mathcal{J}_{(\textbf{2}^{I})^{[2]}}$} we have,
\begin{enumerate}
      \item $(f_{i}^{a})^{*} = f_{i}^{1}$, $(0,g_{i}^{1})^{*} = (g_{i}^{1},g_{i}^{1})$.
			\item $(f_{i}^{1})^{*} = f_{i}^{a}$, $(g_{i}^{1},g_{i}^{1})^{*} = (0,g_{i}^{1})$.
\end{enumerate}
\end{lemma}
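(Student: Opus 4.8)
The plan is to verify each of the four equalities by a direct coordinatewise computation straight from the definition $j^{*} = \bigwedge\{x : x \nleq \sim j\}$. First I would record the explicit form of the negation in each algebra: in $\textbf{3}^{I}$ the operation $\sim$ acts coordinatewise through the negation of $\textbf{3}$, which fixes $a$ and interchanges $0$ and $1$, while in $(\textbf{2}^{I})^{[2]}$ one has $\sim(g,g') = ((g')^{c}, g^{c})$ by Proposition \ref{prop1}. With these, computing $\sim(f_{i}^{a})$, $\sim(f_{i}^{1})$, $\sim(0,g_{i}^{1})$ and $\sim(g_{i}^{1},g_{i}^{1})$ is immediate.

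Second, for a fixed generator $j$ I would characterize the set $U_{j} := \{x : x \nleq \sim j\}$. The key point is that the order is coordinatewise, so $x \leq \sim j$ fails exactly when $x$ exceeds $\sim j$ in some coordinate. In $\textbf{3}^{I}$, our $\sim j$ takes the top value $1$ at every coordinate $k \neq i$, so no such coordinate can witness $x \nleq \sim j$; this reduces membership in $U_{j}$ to a single constraint on the $i$-th coordinate. For instance, since $\sim(f_{i}^{a})(i) = a$ one gets $U_{f_{i}^{a}} = \{f : f(i) = 1\}$, and since $\sim(f_{i}^{1})(i) = 0$ one gets $U_{f_{i}^{1}} = \{f : f(i) \in \{a,1\}\}$.

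Third, I would compute $\bigwedge U_{j}$ coordinatewise in $\textbf{3}^{I}$: at every coordinate $k \neq i$ the value $0$ already lies in the relevant set, so the meet is $0$ there, while at coordinate $i$ the meet is the least value permitted by the constraint, namely $1$ for $(f_{i}^{a})^{*}$ and $a$ for $(f_{i}^{1})^{*}$. This yields $(f_{i}^{a})^{*} = f_{i}^{1}$ and $(f_{i}^{1})^{*} = f_{i}^{a}$ at once.

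I expect the $(\textbf{2}^{I})^{[2]}$ cases to be the only mildly delicate ones, and the main obstacle lives there. Since $\sim(0,g_{i}^{1}) = ((g_{i}^{1})^{c}, 1)$, the condition $(g,g') \nleq \sim(0,g_{i}^{1})$ collapses to $g \nleq (g_{i}^{1})^{c}$, because $g' \leq 1$ is automatic; here I must use that $g_{i}^{1}$ is an atom of $\textbf{2}^{I}$, so that $g \nleq (g_{i}^{1})^{c}$ is equivalent to $g_{i}^{1} \leq g$. The constraint $g \leq g'$ built into $B^{[2]}$ then forces $g_{i}^{1} \leq g'$ as well, and taking the componentwise meet gives $(0,g_{i}^{1})^{*} = (g_{i}^{1},g_{i}^{1})$. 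For $(g_{i}^{1},g_{i}^{1})$ the negation is $((g_{i}^{1})^{c},(g_{i}^{1})^{c})$, and since $g \leq g'$ the disjunctive condition $g \nleq (g_{i}^{1})^{c}$ or $g' \nleq (g_{i}^{1})^{c}$ reduces to $g_{i}^{1} \leq g'$ alone, leaving the first component free to descend to $0$; hence $(g_{i}^{1},g_{i}^{1})^{*} = (0,g_{i}^{1})$. Throughout I would invoke the fact from \cite{JR11} that $j^{*}$ always lands in $\mathcal{J}_{K}$ to confirm that the meets just computed are indeed the claimed completely join irreducible elements, though the computations exhibit them explicitly in any case.
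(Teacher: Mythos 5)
Your proposal is correct and follows essentially the same route as the paper's own argument: both reduce the condition $x \nleq \sim j$ to a single constraint at coordinate $i$ (using that $\sim j$ is top elsewhere) in $\textbf{3}^{I}$, and to the atom condition $g_{i}^{1} \leq g$ (resp.\ $g_{i}^{1} \leq g'$) in $(\textbf{2}^{I})^{[2]}$. The only cosmetic difference is that you evaluate $\bigwedge U_{j}$ coordinatewise, whereas the paper exhibits the claimed element as the minimum of $U_{j}$; the substance is identical.
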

%===================================================================================================================================================================================
Now we return to Theorem  \ref{thm2.3}.\vskip 2pt
\noindent \textbf{Proof of Theorem \ref{thm2.3}}:\\ 
\noindent Let the Kleene algebra $\textbf{3}^{I}$ be given. Consider $\textbf{2}^{I}$ as a Boolean subalgebra of $\textbf{3}^{I}$.
Using the definition of $\phi$ (cf. Theorem \ref{join irr}) and its extension (cf. Lemma \ref{lemma2.1}), and using Lemma \ref{lemma2.2} we have, for each $i \in I$,
$$\phi((f_{i}^{a})^{*}) = \phi(f_{i}^{1}) = (g_{i}^{1},g_{i}^{1}) = \phi(f_{i}^{a})^{*},\\
\phi((f_{i}^{1})^{*}) = \phi(f_{i}^{a}) = (0,g_{i}^{1}) = \phi(f_{i}^{1})^{*}.$$
By Theorem \ref{thm2.8}, $\phi$ is an order isomorphism between $\mathcal{J}_{\textbf{3}^{I}}$ and $\mathcal{J}_{(\textbf{2}^{I})^{[2]}}$. Hence using Theorem \ref{thm2.10}, $\Phi$ is an isomorphism between the De Morgan algebras
$\textbf{3}^{I}$ and $(\textbf{2}^{I})^{[2]}$. As both the algebras are also Kleene algebras,
which are also equational algebras defined over De Morgan algebras, the De Morgan isomorphism $\Phi$ extends to Kleene isomorphism. %Hence we have Theorem \ref{thm2.3}.
\qed

%\noindent Now using Theorem \ref{thm2.2} we get Theorem \ref{thm2.1}.
\vskip 2pt 
Let us illustrate the above theorem through examples.
\begin{example}\label{eg3}
Consider the Kleene algebra $\textbf{3} := \{0,a,1\}$. Then $\mathcal{J}_{\textbf{3}} = \{a,1\}$.
For $\textbf{2} := \{0,1\}$,  $\textbf{2}^{[2]} = \{(0,0),(0,1),(1,1)\}$ and $\mathcal{J}_{\textbf{2}^{[2]}} = \{(0,1),(1,1)\}$.  Further, $a^{*} = 1$, $1^{*} = a$ and $(0,1)^{*} = (1,1)$ and $(1,1)^{*} = (0,1)$.\\
Then $\phi: \mathcal{J}_{\textbf{3}} \rightarrow \mathcal{J}_{\textbf{2}^{[2]}}$ is defined  as\\
\hspace*{3 cm} $\phi(a) := (0,1)$,\\
\hspace*{3 cm} $\phi(1): = (1,1)$.\\
Hence the extension map $\Phi: \textbf{3} \rightarrow \textbf{2}^{[2]}$ is given as\\
\hspace*{3 cm} $\Phi(a) := (0,1)$,\\
\hspace*{3 cm} $\Phi(1) := (1,1)$,\\
\hspace*{3 cm} $\Phi(0) := (0,0)$.\\
The diagrammatic illustration of this example is given in Figure \ref{fig3}. 
 
\begin{figure}[h] 
%\label{fig3}
\begin{tikzpicture}[scale=.75]
    %\tikzstyle{every node}=[draw,circle,fill=black,minimum size=4pt,
                            inner sep=0pt]
%\draw (0,0) node (ab)[label=left:ab]{};
%%%%%%%%%%%%%%%%%%%%%%%%%%%%%%%%%%%%%%%%%%%%%%%%%%%%%%%%%%%%%%%%%%%%%%%%%%%%%%%%%%%%%%%%%%%%%%%%%%%%%%%%%%%%%%%%%%%%%%%%%%%
%%%%%%%%%%%%%%%%%%%%%%%%%%%%%%%OUTER%%%%%%%%%%%%%%%%%%%%%%%%%%%%%%%%%%%%%%%%%%%%%%%%%%%%%%%%%%%%%%%%%%%%%%%%%%%%%%%%%%
    \draw [dotted] (0,1) -- (0,1);
	
    %\draw [dotted] (6,1) -- (7,1);
    %\draw (0,0) node [label=left:{\it (ab,abcd)}]{};
    %[label=left:$\LD ab \ abcd \RD$]{};
    \draw (3,0) -- (3,2);
		\draw (3,2) -- (3,4);
		\draw [fill] (3,0) circle [radius = 0.1];
		\node [below] at (3,0) {0 = $\sim$ 1};
		%\draw (3,0) node [label=below:{\it 0 = $\sim$ 1}]{};
		%\draw (3,2) node [label=right:{\it $a = \sim a$}]{};
		\draw [fill] (3,2) circle [radius = 0.1];
		\node [right] at (3,2) {$a = \sim a$};
		%\draw (3,4) node [label=above:{\it $1 = \sim 0$}]{};
		\draw [fill] (3,4) circle [radius = 0.1];
		\node [above] at (3,4) {1 = $\sim$ 0};
		\node [right] at (1,2) {$\textbf{3}$ := };

		\node [right] at (6,2) {$\textbf{2}$ := };
		\draw (8,1) -- (8,3);
		\draw [fill] (8,1) circle [radius = 0.1];
		\node [below] at (8,1) {0 = $\sim$ 1};
		\draw [fill] (8,3) circle [radius = 0.1];
		\node [below] at (8,3) {1 = $\sim$ 0};
		
		\node [right] at (10,2) {$\textbf{2}^{[2]}$ := };
		\draw (13,0) -- (13,2);
		\node [below] at (13,0) {(0,0) = $\sim$ (1,1)};
    \draw [fill] (13,0) circle [radius = 0.1];
		\node [right] at (13,2) {(0,1) = $\sim$ (0,1)};
    \draw [fill] (13,2) circle [radius = 0.1];
		\draw (13,2) -- (13,4);
		\draw [fill] (13,4) circle [radius = 0.1];
		\node [above] at (13,4) {(1,1) = $\sim$ (0,0)};

\end{tikzpicture}
\caption{$\textbf{3} \cong \textbf{2}^{[2]}$}

%\caption{Hass diagram of $B^{[2]}$}
\label{fig3}
\end{figure}

\end{example}
%========================================================================================================%
\begin{example}
Now, let us consider the Kleene algebra $\textbf{3}\times \textbf{3}$.\\
$\textbf{3}\times \textbf{3}: = \{(0,0),(0,a),(0,1),(a,1),(1,1),(a,0),(1,0),(1,a),(a,a)\}$.\\
$\mathcal{J}_{\textbf{3}\times \textbf{3}} = \{(0,a),(0,1),(a,0),(1,0)\}$ and \\
$(0,a)^{*} = (0,1), (0,1)^{*} = (0,a), (a,0)^{*} = (1,0), (1,0)^{*} = (a,0)$.\\ 
Consider the Boolean sub algebra $\textbf{2}\times \textbf{2} = \{(0,0),(0,1),(1,0),(1,1)\}$ of $\textbf{3}\times \textbf{3}$. For  convenience,  let us change the notations. We represent the set $\textbf{2}\times \textbf{2}$  and its elements as $\textbf{2}^{2} = \{0,x,y,1\}$, where   $(0,0)$ is replaced by  $0$, $(0,1)$ is replaced by $x$, 
$(1,0)$  is replaced by $y$, and $(1,1)$  is replaced by $1$. Then \\
$(\textbf{2}^{2})^{[2]} = \{(0,0),(0,x),(0,1),(0,y),(x,x),(x,1),(y,1),(y,y),(1,1)\}$, and\\ 
 $\mathcal{J}_{(\textbf{2}^{2})^{[2]}} = \{(0,x),(0,y),(x,x),(y,y)\}$.\\
Further, $(0,x)^{*} = (x,x), (x,x)^{*} = (0,x)$ and $(0,y)^{*} = (y,y), (y,y)^{*} = (0,y)$.\\
The diagrammatic illustration of the isomorphism between $\textbf{3}\times \textbf{3}$ and  
$(\textbf{2}^{2})^{[2]}$ is given in Figure \ref{fig4}.

\begin{figure}[h] %\label{fig4}

\begin{tikzpicture}[scale=.75]
    %\tikzstyle{every node}=[draw,circle,fill=black,minimum size=4pt,
                            inner sep=0pt]
%\draw (0,0) node (ab)[label=left:ab]{};
%%%%%%%%%%%%%%%%%%%%%%%%%%%%%%%%%%%%%%%%%%%%%%%%%%%%%%%%%%%%%%%%%%%%%%%%%%%%%%%%%%%%%%%%%%%%%%%%%%%%%%%%%%%%%%%%%%%%%%%%%%%
%%%%%%%%%%%%%%%%%%%%%%%%%%%%%%%OUTER%%%%%%%%%%%%%%%%%%%%%%%%%%%%%%%%%%%%%%%%%%%%%%%%%%%%%%%%%%%%%%%%%%%%%%%%%%%%%%%%%%
    \draw [dotted] (0,1) -- (0,1);
	
    %\draw [dotted] (6,1) -- (7,1);
    %\draw (0,0) node [label=left:{\it (ab,abcd)}]{};
    %[label=left:$\LD ab \ abcd \RD$]{};
		\node [right] at (0,1.5) {$\textbf{3}\times \textbf{3}$ = };
    \draw (3,3) -- (5,5);
		\draw (3,0) -- (3,3);
		\draw (5,-2) -- (3,0);
		\draw (7,0) -- (5,-2);
		\draw (5,5) -- (7,3);
		\draw (7,3) -- (7,0);
		\draw (5,1) -- (3,0);
		\draw (7,0) -- (5,1);
		\draw (3,3) -- (4.5,3.5);
		\draw (7,3) -- (5.5,3.5);
		\draw (4.5,3.5) -- (5,5);
		\draw (5.5,3.5) -- (5,5);
		\draw (4.5,3.5) -- (5,1);
		\draw (5.5,3.5) -- (5,1);
		
		\draw [fill] (3,3) circle [radius = 0.05];
		\node [left] at (3,3) {$(0,1)$};
	  \draw [fill] (5,5) circle [radius = 0.05];
		\node [above] at (5,5) {$(1,1)$};
		\draw [fill] (3,0) circle [radius = 0.05];
		\node [left] at (3,0) {$(0,a)$};
		\draw [fill] (5,-2) circle [radius = 0.05];
		\node [below] at (5,-2) {$(0,0)$};
		\draw [fill] (7,0) circle [radius = 0.05];
		\node [right] at (7,0) {$(a,0)$};
		\draw [fill] (7,3) circle [radius = 0.05];
		\node [right] at (7,3) {$(1,0)$};
		\draw [fill] (5,1) circle [radius = 0.05];
		\node [below] at (5,1) {$(a,a)$};
		\draw [fill] (4.5,3.5) circle [radius = 0.05];
		\node [below] at (4.1,3.3) {$(a,1)$};
		\draw [fill] (5.5,3.5) circle [radius = 0.05];
		\node [below] at (5.9,3.3) {$(1,a)$};
		
		%\node [above] at (3,4) {1 = $\sim$ 0};
		%\node [right] at (1,2) {$\textbf{3}$ = };
		
		\node [right] at (8,1.5) {$\textbf{2}^{2})^{[2]}$ = };
    \draw (11,3) -- (13,5);
		\draw (11,0) -- (11,3);
		\draw (13,-2) -- (11,0);
		\draw (15,0) -- (13,-2);
		\draw (13,5) -- (15,3);
		\draw (15,3) -- (15,0);
		\draw (13,1) -- (11,0);
		\draw (15,0) -- (13,1);
		\draw (11,3) -- (12.5,3.5);
		\draw (15,3) -- (13.5,3.5);
		\draw (12.5,3.5) -- (13,5);
		\draw (13.5,3.5) -- (13,5);
		\draw (12.5,3.5) -- (13,1);
		\draw (13.5,3.5) -- (13,1);
		
		\draw [fill] (11,3) circle [radius = 0.05];
		\node [left] at (11,3) {$(x,x)$};
	  \draw [fill] (13,5) circle [radius = 0.05];
		\node [above] at (13,5) {$(1,1)$};
		\draw [fill] (11,0) circle [radius = 0.05];
		\node [left] at (11,0) {$(0,x)$};
		\draw [fill] (13,-2) circle [radius = 0.05];
		\node [below] at (13,-2) {$(0,0)$};
		\draw [fill] (15,0) circle [radius = 0.05];
		\node [left] at (15,0) {$(0,y)$};
		\draw [fill] (15,3) circle [radius = 0.05];
		\node [left] at (15,3) {$(y,y)$};
		\draw [fill] (13,1) circle [radius = 0.05];
		\node [below] at (13,1) {$(0,1)$};
		\draw [fill] (12.5,3.5) circle [radius = 0.05];
		\node [below] at (12.1,3.3) {$(x,1)$};
		\draw [fill] (13.5,3.5) circle [radius = 0.05];
		\node [above] at (13.9,3.3) {$(y,1)$};
		
		%\node [right] at (6,2) {$\textbf{2}$ = };
		
		\draw (5,-4) -- (5,-7);
		\draw (6, -4.5) -- (6,-7.5);
		\node [below] at (4,-5) {$\mathcal{J}_{\textbf{3}\times \textbf{3}} = $};
		\draw [fill] (5,-4) circle [radius = 0.05];
		\draw [fill] (5,-7) circle [radius = 0.05];
		\node [below] at (5,-7) {$(0,a)$};
		\node [above] at (5,-4) {$(0,1)$};
		\draw [fill] (6,-4.5) circle [radius = 0.05];
		\draw [fill] (6,-7.5) circle [radius = 0.05];
		\node [above] at (6,-4.5) {$(1,0)$};
		\node [below] at (6,-7.5) {$(a,0)$};
		
		\draw (12,-4) -- (12,-7);
		\draw (13, -4.5) -- (13,-7.5);
		\node [below] at (10,-5) {$\mathcal{J}_{(\textbf{2}^{2})^{[2]}} = $};
		\draw [fill] (12,-4) circle [radius = 0.05];
		\draw [fill] (12,-7) circle [radius = 0.05];
		\node [below] at (12,-7) {$(0,x)$};
		\node [above] at (12,-4) {$(x,x)$};
		\draw [fill] (13,-4.5) circle [radius = 0.05];
		\draw [fill] (13,-7.5) circle [radius = 0.05];
		\node [above] at (13,-4.5) {$(y,y)$};
		\node [below] at (13,-7.5) {$(0,y)$};
		
		\draw [dotted] (5,-7) -- (12,-7);
		\draw [dotted] (5,-4) -- (12,-4);
		\node [above] at (8.5,-4) {$\phi$};
		\draw [dotted] (6,-7.5) -- (13,-7.5);
		\draw [dotted] (13,-4.5) -- (6,-4.5);
		%\draw (8,1) -- (8,3);
		%\draw [fill] (8,1) circle [radius = 0.1];
		%\node [below] at (8,1) {0 = $\sim$ 1};
		%\draw [fill] (8,3) circle [radius = 0.1];
		%\node [below] at (8,3) {1 = $\sim$ 0};
		
		%\node [right] at (10,2) {$\textbf{2}\times_{\leq}\textbf{2}$ = };
		%\draw (13,0) -- (13,2);
		%\node [below] at (13,0) {(0,0) = $\sim$ (1,1)};
    %\draw [fill] (13,0) circle [radius = 0.1];
		%\node [right] at (13,2) {(0,1) = $\sim$ (0,1)};
    %\draw [fill] (13,2) circle [radius = 0.1];
		%\draw (13,2) -- (13,4);
		%\draw [fill] (13,4) circle [radius = 0.1];
		%\node [above] at (13,4) {(1,1) = $\sim$ (0,0)};

\end{tikzpicture}
\caption{$\textbf{3}\times \textbf{3} \cong (\textbf{2}^{2})^{[2]}$}

\label{fig4}
\end{figure}

\end{example}
% =========================================================================================================================================================================================
% ===========================================================================================================================

\section{The  logic  $\mathcal{L}_{K}$ for Kleene algebras and a 3-valued semantics}\label{section3} % $B^{[2]}$
%: 3-valued semantics through  rough set semantics}

As mentioned earlier, Moisil in 1941 (cf. \cite{Cignoli07}) proved that $B^{[2]}$ forms a 3-valued LM algebra. Varlet 
(cf. \cite{Boicescu91}) noted the equivalence between  regular double Stone algebras and 3-valued LM algebras, whence  $B^{[2]}$ can   be given the structure of a regular double Stone algebra as well.   %So the logical aspect of 
%$B^{[2]}$ leads 
So, while discussing the  logic corresponding to the structures $B^{[2]}$, one is naturally led to  3-valued {\L}ukasiewicz logic. %, looked upon as 3-valued LM or regular double Stone algebras,  is 3-valued %{\L}ukasiewicz logic. 
Here, due to Proposition \ref{prop1} and Theorem \ref {thm2.1},  we focus on  $B^{[2]}$   as a {\it Kleene algebra}, and study the (propositional) logic corresponding to the class of Kleene algebras and the structures $B^{[2]}$. We denote this   system   as  $\mathcal{L}_{K}$, and present it below. %, and show that it is sound and complete with respect to a  3-valued semantics as well. %, and %give it  propose a  3-valued semantics for the propositional 
%logic $\mathcal{L}_{K}$ (defined below) and use algebraic methods to prove soundness and completeness.

Our approach to the study is 
motivated by Dunn's 4-valued semantics of the De Morgan consequence system \cite{Dunn99}. The 4-valued semantics arises from the fact that each element of a De Morgan algebra can be looked upon as a pair of sets. Now, using Stone's representation, 
each Boolean algebra is embeddable in a power set algebra, whence   $B^{[2]}$, for any Boolean algebra $B$, is embeddable  in
$\mathcal{P}(U)^{[2]}$, for some set $U$. Thus, because of Theorem \ref{thm2.1}, one can  say that each  element of a Kleene algebra can also be looked upon as a pair of sets.
As shown in Example \ref{eg3} above,  the Kleene algebra $\textbf{3} \cong \textbf{2}^{[2]}$. We exploit the fact that  $\textbf{3}$, in particular, can be represented as a Kleene algebra of pairs of sets, to get completeness of the logic $\mathcal{L}_{K}$ for Kleene algebras with respect to a 3-valued semantics. 

The {\it Kleene} axiom 
$\alpha~ \wedge \sim \alpha \vdash \beta~ \vee \sim \beta$, given by Kalman \cite{KAL58}, was studied by Dunn \cite{Dunn99,Dunn00} in the context of providing a 3-valued semantics for a fragment of relevance logic. He showed that the De Morgan consequence system coupled with the Kleene axiom (the resulting  consequence relation being denoted as $\vdash_{Kalman}$), is sound and complete with respect to  a semantic consequence relation (denoted $\models_{0,1}^{\textbf{3}_{R}}$) defined on 
$\textbf{3}_{R}$,  the {\it right hand chain} of the De Morgan lattice $\textbf{4}$ given in Figure \ref{fig6}. $\textbf{3}_{R}$ is the side of  $\textbf{4}$ in which the elements  are interpreted as $t$(rue), $f$(alse) and $b$(oth), and  $\models_{0,1}^{\textbf{3}_{R}}$ essentially incorporates truth {\it and} falsity preservation by valuations in its definition. He called this consequence system, the {\it Kalman consequence system}.
%Dunn \cite{Dunn66} represented the De Morgan lattice $\textbf{4}$ (in fact, any De Morgan algebra), depicted in Figure \ref{fig6}  as a De Morgan lattice  of pairs of sets. 
% 
% ============================================================================================================================
% ================================================================================================================================================================================================================
\begin{figure}[h]
\begin{tikzpicture}[scale=.75]
    %\tikzstyle{every node}=[draw,circle,fill=black,minimum size=4pt,
                            inner sep=0pt]
    \draw [dotted] (0,1) -- (0,1);

		\draw (7,4) -- (5,2);
		\draw [fill] (5,2) circle [radius = 0.1];
		\node [left] at (4.75,2) {$n = \sim n$};
		\draw (7,0) -- (5,2);
		\draw (7,4) -- (9,2);
		\draw [fill] (9,2) circle [radius = 0.1];
		\node [right] at (9.25,2) {$b= \sim b$};
		\draw (7,0) -- (9,2);
		\draw [fill] (7,0) circle [radius = 0.1];
		\node [below] at (7,-0.25) {$f = \sim t$};
		
		\draw [fill] (7,4) circle [radius = 0.1];
		\node [above] at (7,4.25) {$t = \sim f$};

\end{tikzpicture}
\caption{De Morgan lattice $\textbf{4}$}
\label{fig6}
\end{figure}
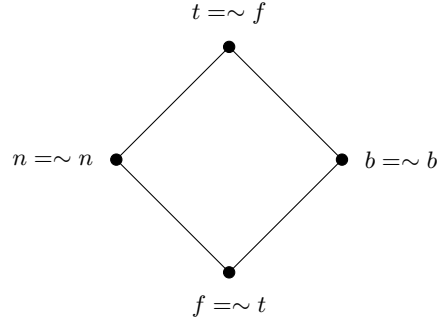
%So the Kalman consequence system  depends on lattice $\textbf{4}$. Then he obtained 
The completeness result for the Kalman consequence system  is obtained considering all 4-valued valuations restricted to $\textbf{3}_{R}$: the proof makes explicit reference to valuations on $\textbf{4}$. 

The logic $\mathcal{L}_{K}$ ($K$ for Kalman and Kleene) that we are considering in our work, has a consequence system that is $\vdash_{Kalman}$, with slight modifications. %, denoting the %corresponding  logic as $\mathcal{L}_{K}$ ($K$ for Kalman and Kleene). 
$\mathcal{L}_{K}$  is  shown to be sound and complete with respect to a 3-valued semantics that is based on the same idea underlying the consequence relation $\models_{0,1}^{\textbf{3}_{R}}$, viz. that of truth as well as falsity preservation. However, the definitions and proofs in this case, {\it do not refer to} $\textbf{4}$. 

%shall  now present the formal system studied by him in  \cite{Dunn99}, and denote it as $\mathcal{L}_{K}$. 
%In our proposal, we are treating lattice (Kleene algebra) $\textbf{3}$ independently, as we are able to represent $\textbf{3}$ as Kleene algebra of pair of sets, more precisely %in terms of rough sets (Lemma \ref{lemma3.3}).
%\subsection{The logic $\mathcal{L}_{K}$ and Algebraic Semantics} 
%\label{3vsem}
\vskip 3pt
Let us present  $\mathcal{L}_{K}$. The language consists of
\begin{itemize} 
\item propositional variables:  $p,q,r, \ldots$. 
\item propositional constants: $\top, \bot$.
\item logical connectives: $\vee, \wedge, \sim$.
\end{itemize}
% ==============================================================================================
\noindent The well-formed formulae of the logic are defined through the scheme: \vskip 2pt \hspace*{2 cm} $\top~ | ~\bot~ |~ p ~| ~\alpha \vee \beta~ |~ \alpha \wedge \beta~ |~ \sim \alpha$. \vskip 2pt
\begin{notation} Denote the set of  % propositional variables by $\mathcal{P}$, and that of 
 well-formed formulae by $\mathcal{F}$. \end{notation}

The consequence relation $\vdash_{\mathcal{L}_{K}}$ is now given through the following postulates and rules, taken from \cite{Dunn99} and \cite{Dunn05}. These define reflexivity and transitivity of $\vdash$, introduction, elimination principles and the distributive law for the connectives $\wedge$ and $\vee$, contraposition and double negation laws for the negation operator $\sim$, the Kleene property for $\sim$, and  some basic requirements from  the propositional constants $\top, \bot$. Let $\alpha,\beta, \gamma \in \mathcal{F}$. %,  the consequent $\alpha \vdash \beta$ in $\mathcal{L}_{K}$ is derived from .
% is called a consequent. One can interpret 
%$\alpha \vdash \beta$ as ``$\beta$ is consequence of $\alpha$''. We use consequents as postulates to represent a logic. 
%We write $\alpha \vdash_{\mathcal{L}} \beta$ if the consequent $\alpha \vdash \beta$ is derivable in a logical system $\mathcal{L}$.
%A pair $(\alpha,\beta)$ is called a consequence pair. If the pair $(\alpha,\beta)$ is derivable 
%in a system of logic $\mathcal{L}$, we denote it as $\alpha \vdash_{\mathcal{L}} \beta$. In presenting a logic 
%, we write $\alpha \vdash \beta$ instead of $(\alpha,\beta)$ to denote a postulates.\\

%\noindent Logic of our concern:
\begin{definition} \label{defLK}
 {\rm ($\mathcal{L}_{K}$- postulates)
\begin{enumerate}
\item $\alpha \vdash \alpha$ \label{Lk1} 
\item $\alpha \vdash \beta, \beta \vdash \gamma~ /~ \alpha \vdash \gamma $.
\item $\alpha \wedge \beta \vdash \alpha$, $\alpha \wedge \beta \vdash \beta$.
\item $\alpha \vdash \beta, \alpha \vdash \gamma~ /~ \alpha \vdash \beta \wedge \gamma$.
\item $\alpha \vdash \gamma, \beta \vdash \gamma ~/~ \alpha \vee \beta \vdash \gamma$.
\item $\alpha \vdash \alpha \vee \beta$, $\beta \vdash \alpha \vee \beta$.
\item $\alpha \wedge (\beta \vee \gamma) \vdash (\alpha \vee \beta) \wedge (\alpha \vee \gamma)$ (Distributivity).
\item $\alpha \vdash \beta~ /~ \sim \beta \vdash \sim \alpha$ (Contraposition).
\item  $\sim \alpha \wedge \sim \beta \vdash \sim(\alpha \vee \beta)$ ($\vee$-linearity).
\item $\alpha \vdash \top$ (Top).
\item  $\bot \vdash \alpha$ (Bottom).
\item $\top \vdash \sim \bot$ (Nor). \label{Lk11}
\item $\alpha \vdash \sim \sim \alpha$.\label{dn1}
\item $\sim \sim \alpha \vdash \alpha$. \label{dn2}
\item $\alpha~ \wedge \sim \alpha \vdash \beta~ \vee \sim \beta$ (Kalman/Kleene).\label{kl}
\end{enumerate}}
\end{definition}

\noindent  Let us now consider any Kleene algebra $ (K,\vee,\wedge,\sim,0,1)$. We first define valuations on $K$.
\begin{definition}
A map $v: \mathcal{F} \rightarrow K$ is called a {\rm valuation} on $K$, if it satisfies the following properties for any $\alpha,\beta \in \mathcal{F}$.
\begin{enumerate}
\item $v(\alpha \vee \beta) = v(\alpha) \vee v(\beta)$.
\item $v(\alpha \wedge \beta) = v(\alpha) \wedge v(\beta)$.
\item $v(\sim \alpha) = \sim v(\alpha)$.
\item $v(\bot) = 0$. 
\item $v(\top) = 1$.
\end{enumerate} 
\end{definition}
%================================================================================================================================================================
 A consequent $\alpha \vdash \beta$ is {\it valid in K under the valuation} $v$, if $v(\alpha) \leq v(\beta)$. If the consequent is valid under all valuations on $K$, then it  is {\it valid in K}.  Let $\mathcal{A}$ be a class of Kleene algebras. If the consequent $\alpha \vdash \beta$ is valid in each algebra of $\mathcal{A}$, then we say $\alpha \vdash \beta$ {\it is valid in} $\mathcal{A}$, and denote it as $\alpha \vDash_{\mathcal{A}} \beta$.\vskip 2pt

Let $\mathcal{A}_{K}$ denote the class of {\it all} Kleene algebras. We have, in the classical manner,
\begin{theorem}\label{thm9}
$\alpha \vdash_{\mathcal{L}_{K}} \beta$ if and only if $\alpha \vDash_{\mathcal{A}_{K}} \beta$, for any 
$\alpha,\beta \in \mathcal{F}$.

\end{theorem}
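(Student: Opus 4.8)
The plan is to prove the standard algebraic soundness-and-completeness theorem (a strong completeness result for the consequence relation), establishing both directions separately. For \textbf{soundness} ($\alpha \vdash_{\mathcal{L}_K} \beta$ implies $\alpha \vDash_{\mathcal{A}_K} \beta$), I would induct on the length of the derivation of $\alpha \vdash \beta$. The base step checks that each of the fifteen postulates in Definition \ref{defLK} holds in every Kleene algebra under every valuation $v$: for instance, postulate \ref{kl} (Kalman/Kleene) holds precisely because condition (2) of Definition \ref{def1} guarantees $v(\alpha) \wedge \sim v(\alpha) \leq v(\beta) \vee \sim v(\beta)$ for all elements, and the contraposition, double-negation, and De Morgan ($\vee$-linearity) rules correspond directly to the De Morgan-algebra axioms. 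The inductive step verifies that the two rules with hypotheses---transitivity and the $\wedge$-introduction / $\vee$-elimination rules---preserve validity, which is immediate since $\leq$ is transitive and $\wedge,\vee$ are the lattice meet and join. This direction is routine once one matches each postulate to its defining algebraic inequality.

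For \textbf{completeness} ($\alpha \vDash_{\mathcal{A}_K} \beta$ implies $\alpha \vdash_{\mathcal{L}_K} \beta$), I would use the Lindenbaum--Tarski construction. Define a relation $\equiv$ on $\mathcal{F}$ by $\alpha \equiv \beta$ iff both $\alpha \vdash_{\mathcal{L}_K} \beta$ and $\beta \vdash_{\mathcal{L}_K} \alpha$. Using the postulates of Definition \ref{defLK} one shows $\equiv$ is a congruence with respect to $\vee, \wedge, \sim$ (the $\wedge$- and $\vee$-rules give monotonicity and hence congruence for the lattice operations, and contraposition gives it for $\sim$). The quotient $\mathcal{F}/\!\!\equiv$ then carries a well-defined algebra structure; one verifies it is a \emph{Kleene} algebra by checking each defining identity of Definition \ref{def1}: distributivity and boundedness come from postulates 3--7 and 10--11, the De Morgan property and involution come from 8--9 and 13--14, and the crucial Kleene property (2) comes from postulate \ref{kl}. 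The constants $\top, \bot$ supply $1$ and $0$, with postulate \ref{Lk11} ensuring $\sim 0 = 1$.

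The heart of the argument is the canonical valuation $v_0(\alpha) := [\alpha]$, the $\equiv$-class of $\alpha$, which is a valuation on the Lindenbaum algebra $\mathcal{F}/\!\!\equiv$ by construction. The key observation is that, in this algebra, $[\alpha] \leq [\beta]$ holds precisely when $\alpha \vdash_{\mathcal{L}_K} \beta$ (since the order on the quotient is $[\alpha] \leq [\beta]$ iff $\alpha \wedge \beta \equiv \alpha$, which unwinds to $\alpha \vdash \beta$). Now suppose $\alpha \vDash_{\mathcal{A}_K} \beta$. Since $\mathcal{F}/\!\!\equiv$ is itself a member of $\mathcal{A}_K$, the consequent is valid in it under every valuation, in particular under $v_0$; hence $v_0(\alpha) = [\alpha] \leq [\beta] = v_0(\beta)$, which by the key observation yields $\alpha \vdash_{\mathcal{L}_K} \beta$, as required.

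The main obstacle I anticipate is the verification that $\equiv$ is genuinely a congruence and that the quotient satisfies \emph{all} the Kleene-algebra axioms---this requires carefully deriving monotonicity of each connective from the introduction/elimination rules (e.g.\ deducing $\alpha \vdash \alpha'$ and $\beta \vdash \beta'$ imply $\alpha \wedge \beta \vdash \alpha' \wedge \beta'$ using postulates 3 and 4, and similarly for $\vee$ and $\sim$), and then confirming each identity. These are the routine but numerous lattice-theoretic manipulations that underlie the phrase ``in the classical manner''; none is deep, but the Kleene property (postulate \ref{kl}) is the one genuinely distinctive axiom that must be tracked through to ensure the quotient is a Kleene algebra rather than merely a De Morgan algebra.
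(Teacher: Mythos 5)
Your proposal is correct and is precisely the classical soundness-plus-Lindenbaum--Tarski argument that the paper invokes without spelling out (its ``in the classical manner''): induction on derivations for soundness, and the quotient $\mathcal{F}/\!\equiv$ with the canonical valuation $v_0(\alpha)=[\alpha]$ and the key fact $[\alpha]\leq[\beta]$ iff $\alpha\vdash_{\mathcal{L}_K}\beta$ for completeness. One small remark: postulate 7 as printed, $\alpha\wedge(\beta\vee\gamma)\vdash(\alpha\vee\beta)\wedge(\alpha\vee\gamma)$, is trivially valid in any lattice and is evidently a typo for the genuine distributive law $\alpha\wedge(\beta\vee\gamma)\vdash(\alpha\wedge\beta)\vee(\alpha\wedge\gamma)$, which is what your verification that the quotient is a distributive (hence Kleene) lattice actually requires --- you have read it in the intended way.
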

\noindent Let us now focus on  valuations on the Kleene algebra  $B^{[2]}$. Then for $\alpha \in \mathcal{F}$, $v(\alpha)$ is a pair of  the form $(a,b)$. Suppose for $\beta \in \mathcal F$,  $v(\beta): = (c,d)$. By definition, the consequent $\alpha \vdash \beta$ is valid in $B^{[2]}$ under  $v$, when $v(\alpha) \leq v(\beta)$, i.e., $(a,b) \leq (c,d) $, or $a \leq c$ and 
$b\leq d$. 

Let $\mathcal{A}_{KB^{[2]}}$ denote the class of Kleene algebras formed by the sets $B^{[2]}$, for  {\it all}  Boolean algebras $B$. Then we have
\begin{theorem}\label{thm10}
$\alpha \vDash_{\mathcal{A}_{K}} \beta$ if and only if $\alpha \vDash_{\mathcal{A}_{KB^{[2]}}} \beta$, for any 
$\alpha,\beta \in \mathcal{F}$.
\end{theorem}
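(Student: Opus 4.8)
The plan is to prove the two implications separately, with the forward direction being immediate and the backward direction resting entirely on the embedding supplied by Theorem \ref{thm2.1}.

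For the direction $\alpha \vDash_{\mathcal{A}_{K}} \beta \Rightarrow \alpha \vDash_{\mathcal{A}_{KB^{[2]}}} \beta$, I would simply observe that $\mathcal{A}_{KB^{[2]}} \subseteq \mathcal{A}_{K}$: by Proposition \ref{prop1}, every $B^{[2]}$ is itself a Kleene algebra. Hence validity of a consequent in \emph{all} Kleene algebras trivially entails its validity in the subclass of those of the form $B^{[2]}$, and nothing further is needed here.

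The substantial direction is $\alpha \vDash_{\mathcal{A}_{KB^{[2]}}} \beta \Rightarrow \alpha \vDash_{\mathcal{A}_{K}} \beta$, which I would establish by contraposition. Suppose $\alpha \vDash_{\mathcal{A}_{K}} \beta$ fails; then there is a Kleene algebra $\mathcal{K} = (K,\vee,\wedge,\sim,0,1)$ and a valuation $v : \mathcal{F} \to K$ with $v(\alpha) \nleq v(\beta)$. By Theorem \ref{thm2.1} there is a Boolean algebra $\mathcal{B_{K}}$ and a Kleene-algebra embedding $e : \mathcal{K} \hookrightarrow \mathcal{B_{K}}^{[2]}$. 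I would then consider the composite $e \circ v : \mathcal{F} \to \mathcal{B_{K}}^{[2]}$ and argue that it is a valuation on the Kleene algebra $\mathcal{B_{K}}^{[2]}$: since $e$ preserves $\vee, \wedge, \sim$ and the constants $0,1$, each defining clause of a valuation is inherited, e.g. $(e\circ v)(\alpha \vee \beta) = e(v(\alpha) \vee v(\beta)) = e(v(\alpha)) \vee e(v(\beta))$, and likewise for $\wedge$, $\sim$, $\bot$ and $\top$.

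Finally, because $e$ is a lattice embedding it is an order-embedding (from $e(x) \wedge e(y) = e(x \wedge y)$ one gets $x \leq y \Leftrightarrow e(x) \leq e(y)$), so it \emph{reflects} the order. Thus $v(\alpha) \nleq v(\beta)$ forces $e(v(\alpha)) \nleq e(v(\beta))$, i.e.\ $(e\circ v)(\alpha) \nleq (e\circ v)(\beta)$, witnessing the failure of $\alpha \vDash_{\mathcal{A}_{KB^{[2]}}} \beta$ and completing the contrapositive. The only point requiring genuine care is the existence of the order-reflecting Kleene embedding $e$, which is precisely the content of Theorem \ref{thm2.1}; given that, the transfer argument is routine, so in effect this theorem is a corollary of the representation result already proved.
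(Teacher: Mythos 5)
Your proof is correct and takes essentially the same route as the paper: the easy direction via the inclusion $\mathcal{A}_{KB^{[2]}} \subseteq \mathcal{A}_{K}$, and the substantive direction by composing an arbitrary valuation with the embedding supplied by Theorem \ref{thm2.1} to obtain a valuation on $\mathcal{B_{K}}^{[2]}$. The only differences are presentational: you argue by contraposition where the paper argues directly, and you make explicit the order-reflection of the embedding, a step the paper leaves implicit in calling $\phi$ an embedding.
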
 
\begin{proof}
Let $\alpha \vDash_{\mathcal{A}_{KB^{[2]}}} \beta$. Consider any Kleene algebra  $ (K,\vee,\wedge,\sim,0,1)$, and let $v$ be a valuation on $K$. By Theorem \ref{thm2.1}, there exists a Boolean algebra $B$ such that $K$ is embedded in 
$B^{[2]}$. Let  $\phi$ denote the embedding. It is  a routine verification that $\phi\circ v$ is  a valuation on $B^{[2]}$. The other direction is trivial, as $\mathcal{A}_{KB^{[2]}}$ is a subclass of $\mathcal{A}_{K}$. \qed
\end{proof}

On the other hand, as observed earlier, the structure  $B^{[2]}$ is embeddable  in 
$\mathcal{P}(U)^{[2]}$ for some set $U$, utilizing Stone's representation. Hence if $v$ is a valuation on $B^{[2]}$, it can be be extended to a valuation on $\mathcal{P}(U)^{[2]}$. Let $\mathcal{A}_{K\mathcal{P}(U)^{[2]}}$ denote the class of Kleene algebras of the form 
$\mathcal{P}(U)^{[2]}$, for {\it all} sets $U$. So, we get from Theorem \ref{thm10} the following.
\begin{corollary}\label{cor1}
$\alpha \vDash_{\mathcal{A}_{K}} \beta$ if and only if %$\alpha \vDash_{\mathcal{A}_{KB^{[2]}}} \beta$ iff 
$\alpha \vDash_{\mathcal{A}_{K\mathcal{P}(U)^{[2]}}} \beta$, for any 
$\alpha,\beta \in \mathcal{F}$.
\end{corollary}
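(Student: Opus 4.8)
The plan is to derive Corollary \ref{cor1} directly from Theorem \ref{thm10} by establishing one further equivalence, namely that validity in the class $\mathcal{A}_{KB^{[2]}}$ coincides with validity in the class $\mathcal{A}_{K\mathcal{P}(U)^{[2]}}$. Since $\mathcal{A}_{K\mathcal{P}(U)^{[2]}}$ is a subclass of $\mathcal{A}_{KB^{[2]}}$ (power set algebras are Boolean algebras), the direction $\alpha \vDash_{\mathcal{A}_{KB^{[2]}}} \beta \Rightarrow \alpha \vDash_{\mathcal{A}_{K\mathcal{P}(U)^{[2]}}} \beta$ is immediate. The substance lies in the converse, and for that I would exploit Stone's representation exactly as the surrounding text indicates: every Boolean algebra $B$ embeds into a power set algebra $\mathcal{P}(U)$ via some injective Boolean homomorphism, call it $\psi$.

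The key step is to promote the Stone embedding $\psi: B \hookrightarrow \mathcal{P}(U)$ to an embedding of the Kleene algebras $B^{[2]} \hookrightarrow \mathcal{P}(U)^{[2]}$. First I would check that the map $\Psi(a,b) := (\psi(a), \psi(b))$ is well defined on $B^{[2]}$: since $a \leq b$ in $B$ and $\psi$ is order preserving, $\psi(a) \leq \psi(b)$, so $(\psi(a),\psi(b)) \in \mathcal{P}(U)^{[2]}$. Next I would verify that $\Psi$ respects the Kleene operations of Proposition \ref{prop1}, which reduces to the fact that $\psi$ preserves $\vee$, $\wedge$, complementation, and the bounds $0,1$; the computation is routine, for instance $\Psi(\sim(a,b)) = \Psi(b^{c},a^{c}) = (\psi(b^{c}),\psi(a^{c})) = (\psi(b)^{c},\psi(a)^{c}) = \sim(\psi(a),\psi(b)) = \sim\Psi(a,b)$, using that $\psi$ commutes with complement. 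Injectivity of $\Psi$ follows from injectivity of $\psi$. Thus $B^{[2]}$ embeds into $\mathcal{P}(U)^{[2]}$ as a Kleene algebra.

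With this embedding in hand, the completeness argument mirrors the proof of Theorem \ref{thm10}. Assuming $\alpha \vDash_{\mathcal{A}_{K\mathcal{P}(U)^{[2]}}} \beta$, take any Boolean algebra $B$ and any valuation $v$ on $B^{[2]}$; then $\Psi \circ v$ is a valuation on $\mathcal{P}(U)^{[2]}$ (one checks it commutes with $\vee,\wedge,\sim$ and sends $\bot,\top$ correctly, since both $v$ and $\Psi$ do), so $\Psi(v(\alpha)) \leq \Psi(v(\beta))$ by hypothesis. Because $\Psi$ is an order embedding (being an injective lattice homomorphism, it reflects order), we conclude $v(\alpha) \leq v(\beta)$, giving $\alpha \vDash_{\mathcal{A}_{KB^{[2]}}} \beta$. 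Chaining this equivalence with Theorem \ref{thm10} yields the corollary.

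The only potential obstacle is the order-reflection step: one must be careful that $\Psi$ reflects the order and not merely preserves it, so that $\Psi(v(\alpha)) \leq \Psi(v(\beta))$ genuinely forces $v(\alpha) \leq v(\beta)$. This is guaranteed because an injective homomorphism between distributive lattices is automatically an order embedding, but it is worth stating explicitly rather than glossing over. Everything else is bookkeeping already carried out in the proof of Theorem \ref{thm10}, so I expect the corollary to follow with minimal additional work once the lifting of the Stone embedding to the $(\cdot)^{[2]}$ construction is recorded.
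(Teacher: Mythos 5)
Your proposal is correct and takes essentially the same route as the paper: the paper likewise derives the corollary from Theorem \ref{thm10} by using Stone's representation to embed $B^{[2]}$ into $\mathcal{P}(U)^{[2]}$ and composing a given valuation on $B^{[2]}$ with that embedding. You simply make explicit the routine verifications (well-definedness of $(a,b)\mapsto(\psi(a),\psi(b))$, preservation of the Kleene operations, and order reflection by the injective homomorphism) that the paper compresses into a one-line remark before the corollary.
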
 
% =================================================================================================================
 Following  \cite{Dunn99}, we now consider semantic consequence relations defined by valuations  $v: \mathcal{F} \rightarrow \textbf{3}$ on the Kleene algebra $\textbf{3}$. Let us re-label the elements of $\textbf{3}$ as $f,~u,~t,$ giving the standard truth value connotations.  %$\alpha \vdash \beta$ be a consequent. 

\begin{definition}\label{tf} Let $\alpha,\beta \in \mathcal F$.\vskip 2pt
\noindent \hspace*{1 cm}  $\alpha \vDash_{t} \beta$ if and only  if, if  $v(\alpha) = t$ then  $v(\beta) = t$~~ {\rm (Truth preservation)}.\\
\hspace*{1 cm}   $\alpha \vDash_{f} \beta$ if and only if, if $v(\beta) = f$ then  $v(\alpha) = f$  {\rm (Falsity preservation)}.\\
\hspace*{1 cm} $\alpha \vDash_{t,f} \beta$ if and only if,  $\alpha \vDash_{t} \beta$ {\it and} $\alpha \vDash_{f} \beta$.
\end{definition} % \vskip 2pt

\noindent \noindent We  adopt 
$\vDash_{t,f}$ as the semantic consequence relation for the logic $\mathcal{L}_{K}$. Note that the consequence relation $\vDash_{t}$ is  the consequence relation used in \cite{Urquhart01} to interpret the strong Kleene logic.  In case of Dunn's 4-valued semantics,  the consequence relations $\vDash_{t} $, $\vDash_{f} $ and $\vDash_{t,f}$ are defined using valuations on $\textbf{4}$. As shown in \cite{Dunn99}, all the three turn out to be equivalent. In order to capture the first-degree entailment fragment of relevance logic, Dunn subsequently uses the semantic consequence relation $\models_{0,1}^{\textbf{3}_{R}}$, defined by valuations  restricted to $\textbf{3}_{R}$, the  right  hand chain of $\textbf{4}$.  
Observe that for valuations on $ \textbf{3}$ that are being considered here, the consequence relations $\vDash_{t}$, $\vDash_{f}$ and $\vDash_{t,f}$ are not equivalent:
$\alpha~\wedge \sim \alpha \vDash_{t} \beta$, but $\alpha~ \wedge \sim \alpha \nvDash_{f} \beta$; $ \beta  \vDash_{f} \alpha ~\vee \sim \alpha$, but $ \beta  \nvDash_{t} \alpha~ \vee \sim \alpha$. 
%{\color{brown} This last statement is fine Mam: As $\alpha \wedge \sim \alpha \vDash_{t} \beta$ but $\alpha \wedge \sim \alpha \nvDash_{t,f} \beta$ and $ \beta  \vDash_{f} \alpha \vee \sim \alpha$ but
%$ \beta  \nvDash_{t,f} \alpha \vee \sim \alpha$ and from the last statement itself $\vDash_{f}$ and $\vDash_{t}$ are not equivalent}
\vskip 2pt

% ================================================================================================================================
\begin{theorem} \label{thm16}
$\alpha \vDash_{\mathcal{A}_{K\mathcal{P}(U)^{[2]}}} \beta$ if and only if $\alpha \vDash_{t,f} \beta$, for any $\alpha,\beta \in \mathcal{F}$.
\end{theorem}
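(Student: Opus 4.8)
The plan is to reduce validity in the class $\mathcal{A}_{K\mathcal{P}(U)^{[2]}}$ to validity in the single algebra $\textbf{3}$, and then to identify validity in $\textbf{3}$ with the relation $\vDash_{t,f}$. The first reduction rests on the structural identity $\mathcal{P}(U)^{[2]} \cong \textbf{3}^{U}$: since $\mathcal{P}(U) \cong \textbf{2}^{U}$ as a complete atomic Boolean algebra, Theorem \ref{thm2.3} taken with index set $U$ gives $\mathcal{P}(U)^{[2]} \cong (\textbf{2}^{U})^{[2]} \cong \textbf{3}^{U}$ as Kleene algebras. Under this identification each pair $(A,B)$ with $A \subseteq B \subseteq U$ corresponds to the map $U \to \textbf{3}$ sending $x$ to $t$ if $x \in A$, to $u$ if $x \in B\setminus A$, and to $f$ if $x \notin B$; the pointwise Kleene operations on $\textbf{3}^{U}$ then match $\vee, \wedge, \sim$ on $\mathcal{P}(U)^{[2]}$.

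Next I would exploit that everything in $\textbf{3}^{U}$ is computed coordinatewise. Given a valuation $w$ on $\mathcal{P}(U)^{[2]} \cong \textbf{3}^{U}$, for each $x \in U$ the composite $v_{x} := \pi_{x} \circ w$ with the projection $\pi_{x}: \textbf{3}^{U} \to \textbf{3}$ is again a valuation on $\textbf{3}$, and conversely any family $(v_{x})_{x \in U}$ of valuations on $\textbf{3}$ assembles to a valuation $w$ on $\textbf{3}^{U}$ via $w(\gamma)(x) := v_{x}(\gamma)$. Since the order on $\textbf{3}^{U}$ is pointwise, $w(\alpha) \leq w(\beta)$ holds iff $v_{x}(\alpha) \leq v_{x}(\beta)$ for every $x$. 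Hence $\alpha \vdash \beta$ is valid in $\textbf{3}^{U}$ iff it is valid in $\textbf{3}$, and ranging $U$ over all sets (the one-point set already recovers $\textbf{3}$) yields: $\alpha \vDash_{\mathcal{A}_{K\mathcal{P}(U)^{[2]}}} \beta$ iff $v(\alpha) \leq v(\beta)$ for every valuation $v$ on $\textbf{3}$.

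It then remains to identify this last condition with $\vDash_{t,f}$, which is the conceptual core of the theorem. For a fixed valuation $v$ on the chain $\textbf{3} = \{f < u < t\}$ I would check that $v(\alpha) \leq v(\beta)$ is equivalent to the conjunction of truth preservation ($v(\alpha) = t \Rightarrow v(\beta) = t$) and falsity preservation ($v(\beta) = f \Rightarrow v(\alpha) = f$). One direction is immediate from $t$ being the top and $f$ the bottom of the chain. For the other, the only ways to have $v(\alpha) > v(\beta)$ are the three cases $(t,u)$, $(t,f)$, $(u,f)$, and each is excluded by truth preservation or by falsity preservation. Quantifying over all $v$, the statement that $v(\alpha) \leq v(\beta)$ for every $v$ is therefore equivalent to $\alpha \vDash_{t,f} \beta$, completing the chain of equivalences.

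I expect the only genuine obstacle to be organizing the first two steps cleanly: one must verify that the identification $\mathcal{P}(U)^{[2]} \cong \textbf{3}^{U}$ and the passage to coordinates $v_{x} = \pi_{x} \circ w$ really produce \emph{valuations}, i.e. respect $\vee, \wedge, \sim, \top, \bot$. This follows because projections are Kleene homomorphisms and all operations are computed pointwise, so the decomposition into coordinate valuations is lossless. The final case analysis on $\textbf{3}$ is short, but it is exactly where both halves of $\vDash_{t,f}$ are indispensable, matching the paper's earlier observation that $\vDash_{t}$ and $\vDash_{f}$ individually fail to characterize the consequence relation.
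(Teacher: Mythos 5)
Your proposal is correct, and its core is the same coordinatewise decomposition the paper itself uses, but you organize the verification differently. The paper's proof works directly with a valuation $v$ on $\mathcal{P}(U)^{[2]}$: for each $x \in U$ it defines $v_{x}(\gamma) \in \{t,u,f\}$ by the membership trichotomy $x \in A$, $x \in B \setminus A$, $x \notin B$ (where $v(\gamma) = (A,B)$), and then spends the bulk of the proof on an exhaustive case check (six cases each for $\wedge$ and $\vee$, three for $\sim$) that each $v_{x}$ is a valuation on $\textbf{3}$; truth preservation then yields $A^{\prime} \subseteq C^{\prime}$ and falsity preservation yields $B^{\prime} \subseteq D^{\prime}$, exactly as in your final step, while the forward direction uses $\textbf{3} \cong \mathcal{P}(U)^{[2]}$ for a one-point $U$, as you do. What you do differently is to subsume the case analysis under structure: identifying $\mathcal{P}(U)^{[2]} \cong \textbf{3}^{U}$ --- your map $(A,B) \mapsto (x \mapsto t/u/f)$ is precisely the paper's $v_{x}$ construction, promoted to an isomorphism --- makes $v_{x} = \pi_{x} \circ w$ automatically a valuation, since projections are Kleene homomorphisms and operations in the product are pointwise. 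This buys brevity and explains \emph{why} the paper's eighteen cases all succeed; the mild cost is that citing Theorem \ref{thm2.3} with $I = U$ is a heavier tool than needed, since a direct pointwise check of the operations (which your explicit description of the correspondence essentially is) suffices without any join-irreducibility machinery. Two minor points, neither a gap: your per-valuation equivalence on the chain ($v(\alpha) \leq v(\beta)$ iff both preservation conditions hold for that fixed $v$) is correct because $\textbf{3}$ is totally ordered, and is a cleaner formulation than the paper's asymmetric treatment of the two directions; and your biconditional ``valid in $\textbf{3}^{U}$ iff valid in $\textbf{3}$'' fails only for $U = \emptyset$, where $\textbf{3}^{\emptyset}$ is trivial, which your parenthetical appeal to the one-point set already repairs, since validity in the class quantifies over all $U$.
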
 
\begin{proof}
Let $\alpha \vDash_{\mathcal{A}_{K\mathcal{P}(U)^{[2]}}} \beta$, and
 $v: \mathcal{F} \rightarrow \textbf{3}$ be a valuation. %By Example \ref{eg3}, 
As we have already noted, $\textbf{3} \cong \mathcal{P}(U)^{[2]}$. If the correspondence is denoted by $\phi$,  $\phi \circ v$ is  a valuation on  $\mathcal{P}(U)^{[2]}$. Then $(\phi \circ v)(\alpha) \leq (\phi \circ v)(\beta)$ implies $v(\alpha) \leq  v(\beta)$. Thus if  $v(\alpha)=t$, we have $v(\beta)=t$, and if $v(\beta)=f$, then also $v(\alpha)=f$.\vskip 2pt

\noindent Now let $\alpha \vDash_{t,f} \beta$. Let $U$ be a set, and $\mathcal{P}(U)^{[2]}$ be the corresponding Kleene algebra.
  Let $v$ be a valuation on $\mathcal{P}(U)^{[2]}$ -- we need to show $v(\alpha) \leq v(\beta)$. For any $\gamma \in \mathcal{F}$ with $v(\gamma) := (A,B)$ and for each $x \in U$, define a map $v_{x}: \mathcal{F} \rightarrow \textbf{3}$ as\\
\hspace*{2 cm}$v_{x}(\gamma)  :=
\left\{
	\begin{array}{lll}
		t & \mbox{if }   x \in A \\
		u & \mbox{if } x \in B  \setminus A  \\
		f & \mbox{if } x \notin B .
	\end{array}
\right.$ \vskip 2pt 
\noindent We show that $v_{x}$ is  a valuation. \\ Consider any $\gamma, \delta \in \mathcal{F}$, with  $v(\gamma) := (A,B)$ and $v(\delta) := (C,D)$.
\begin{enumerate}
\item $v_{x}(\gamma \wedge \delta) = v_{x}(\gamma) \wedge v_{x}(\delta)$. \\Note that $v(\gamma \wedge \delta)= (A \cap C, B \cap D)$.\\
\noindent \un{Case 1} $v_{x}(\gamma) = t$ and $v_{x}(\delta) = t$: Then %$x \in A$, $x \in C$, which imply 
$x \in A \cap C$, and we have $v_{x}(\gamma \wedge \delta) = t = v_{x}(\gamma) \wedge v_{x}(\delta)$.\\
\noindent \un{Case 2} $v_{x}(\gamma) = t$ and $v_{x}(\delta) = u$:  $x \in A$, $x \in D$ and $x \notin C$, which imply $x \notin A \cap C$ but $x \in B \cap D$. Hence $v_{x}(\gamma \wedge \delta) = u = v_{x}(\gamma) \wedge v_{x}(\delta)$.\\
\noindent \un{Case 3}  $v_{x}(\gamma) = t$ and $v_{x}(\delta) = f$:  $x \in A$, $x \notin D$, which imply $x \notin B \cap D$. Hence $v_{x}(\gamma \wedge \delta) = f = v_{x}(\gamma) \wedge v_{x}(\delta)$.\\
\noindent \un{Case 4}  $v_{x}(\gamma) = u$ and $v_{x}(\delta) = f$: $x \notin A$ but $x \in B$ and $x \notin D$, which imply $x \notin B \cap D$. Hence $v_{x}(\gamma \wedge \delta) = f = v_{x}(\gamma) \wedge v_{x}(\delta)$.\\
\noindent \un{Case 5} $v_{x}(\gamma) = u$, $v_{x}(\delta) = u$:   $x \in B$ but $x \notin A$ and $x \in D$ but $x \notin C$.
So, $x \in B\cap D$ and $x \notin A\cap C$. Hence $v_{x}(\gamma \wedge \delta) = u = v_{x}(\gamma) \wedge v_{x}(\delta)$.\\
\noindent \un{Case 6}  $v_{x}(\gamma) = f$, $v_{x}(\delta) = f$:   $x \notin B$ and $x \notin D$. So, $x \notin B \cap D$.
Hence $v_{x}(\gamma \wedge \delta) = f = v_{x}(\gamma) \wedge v_{x}(\delta)$.

\item $v_{x}(\gamma \vee \delta) = v_{x}(\gamma) \vee v_{x}(\delta)$. \\Observe that $v(\gamma \vee \delta)= (A \cup C, B \cup D)$.\\
\noindent \un{Case 1}  $v_{x}(\gamma) = t$ and $v_{x}(\delta) = t$: Then $x \in A$, $x \in C$, which imply $x \in A \cup C$. Hence $v_{x}(\gamma \vee \delta) = t = v_{x}(\gamma) \vee v_{x}(\delta)$.\\
\noindent \un{Case 2}  $v_{x}(\gamma) = t$ and $v_{x}(\delta) = u$:  $x \in A$, $x \in D$ and $x \notin C$, in any way $x \in A \cup C$ . Hence $v_{x}(\gamma \vee \delta) = t = v_{x}(\gamma) \vee v_{x}(\delta)$.\\
\noindent \un{Case 3} $v_{x}(\gamma) = t$ and $v_{x}(\delta) = f$:  $x \in A$, $x \notin D$, which imply $x \in A \cup C$. Hence $v_{x}(\gamma \vee \delta) = t = v_{x}(\gamma) \vee v_{x}(\delta)$.\\
\noindent \un{Case 4} $v_{x}(\gamma) = u$ and $v_{x}(\delta) = f$:  $x \notin A$ but $x \in B$ and $x \notin D$, which imply $x \notin A \cup C$ but $x \in B \cup D$. Hence $v_{x}(\gamma \vee \delta) = u = v_{x}(\gamma) \vee v_{x}(\delta)$.\\
\noindent \un{Case 5}  $v_{x}(\gamma) = u$, $v_{x}(\delta) = u$:  $x \in B$ but $x \notin A$ and $x \in D$ but $x \notin C$.
So, $x \in B\cup D$ and $x \notin A\cup C$. Hence $v_{x}(\gamma \vee \delta) = u = v_{x}(\gamma) \vee v_{x}(\delta)$.\\
\noindent \un{Case 6} $v_{x}(\gamma) = f$, $v_{x}(\delta) = f$:  $x \notin B$ and $x \notin D$. So, $x \notin B \cup D$.
Hence $v_{x}(\gamma \wedge \delta) = f = v_{x}(\gamma) \wedge v_{x}(\delta)$.

\item$v_{x}(\sim\gamma) = \sim v_{x}(\gamma)$. \\Note that $ v(\sim \gamma)= (B^{c},A^{c})$.\\
\noindent \un{Case 1} $v_{x}(\gamma) = t$: Then $x \in A$, i.e. $x \notin A^{c}$. Hence 
          $v_{x}(\sim\gamma)  = f = \sim v_{x}(\gamma)$.\\
\noindent \un{Case 2} $v_{x}(\gamma) = u$:  $x \notin A$ but $x \in B$. So $x \in A^{c}$ and $x \notin B^{c}$. Hence 
          $v_{x}(\sim\gamma)  = u = \sim v_{x}(\gamma)$.\\	
\noindent \un{Case 3} $v_{x}(\gamma) = f$:  $x \notin B$, i.e. $x \in B^{c}$. So 
          $v_{x}(\sim\gamma)  = t = \sim v_{x}(\gamma)$.									

\end{enumerate}

\noindent Hence $v_{x}$ is a valuation in $\textbf{3}$. Now let us show that $v(\alpha) \leq v(\beta)$. Let  $v(\alpha) := (A^{\prime},B^{\prime})$, $v(\beta) := (C^{\prime},D^{\prime})$, and $x \in A^{\prime}$. Then $v_{x}(\alpha) = t$, and as $\alpha \vDash_{t,f} \beta$, by definition, $v_{x}(\beta) = t$. This implies $x \in C^{\prime}$, whence $A^{\prime} \subseteq C^{\prime}$.\\
On the other hand, if $x \notin D^{\prime}$, $v_{x}(\beta) = f$. Hence  $v_{x}(\alpha) = f$, so that $x \notin B^{\prime}$, giving  $B^{\prime} \subseteq D^{\prime}$.
\qed
%\noindent Hence we have the result.
\end{proof} 
\noindent Note that the above proof cannot be applied on the Kleene algebra $B^{[2]}$ instead of $\mathcal{P}(U)^{[2]}$, as we have used set representations explicitly.
%{\color{blue} Note that if consider the Kleene structure $B^{[2]}$, then the 3-valued semantics of logic $\mathcal{L}_{K}$ can not be deduced, as in the proof of the above theorem, we are explicitly using the set theoretic notion of {\it belongingness} $(\in)$. Hence the Stone's representation theorem is playing a crucial role.}

\vskip 2pt An immediate consequence of Theorem \ref{thm9}, Corollary \ref{cor1} and Theorem  \ref{thm16} is the following.
\begin{theorem}\label{thm17}
$\alpha \vdash_{\mathcal{L}_{K}} \beta$ if and only if $\alpha \vDash_{t,f} \beta$, for any $\alpha,\beta \in \mathcal{F}$.
\end{theorem}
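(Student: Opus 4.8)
The plan is simply to compose the three biconditionals already in hand. The statement is a combined soundness-and-completeness theorem linking the syntactic consequence relation $\vdash_{\mathcal{L}_{K}}$ directly to the $3$-valued semantic relation $\vDash_{t,f}$, and since it is flagged as an immediate consequence of Theorem \ref{thm9}, Corollary \ref{cor1} and Theorem \ref{thm16}, essentially all the substantive work has been carried out earlier; what remains is a transitivity argument.

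Concretely, I would proceed as follows. First, Theorem \ref{thm9} supplies the algebraic completeness of $\mathcal{L}_{K}$ over the full class $\mathcal{A}_{K}$ of Kleene algebras, giving $\alpha \vdash_{\mathcal{L}_{K}} \beta$ iff $\alpha \vDash_{\mathcal{A}_{K}} \beta$. Next, Corollary \ref{cor1} lets me narrow the witnessing class from all Kleene algebras down to the power-set-based algebras $\mathcal{P}(U)^{[2]}$, yielding $\alpha \vDash_{\mathcal{A}_{K}} \beta$ iff $\alpha \vDash_{\mathcal{A}_{K\mathcal{P}(U)^{[2]}}} \beta$; this reduction itself rests on Theorem \ref{thm2.1}, which embeds every Kleene algebra into some $B^{[2]}$, combined with Stone's representation of $B$ in a power set, both already established. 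Finally, Theorem \ref{thm16} identifies validity over the $\mathcal{P}(U)^{[2]}$ with the $3$-valued relation, giving $\alpha \vDash_{\mathcal{A}_{K\mathcal{P}(U)^{[2]}}} \beta$ iff $\alpha \vDash_{t,f} \beta$. Chaining these three equivalences produces $\alpha \vdash_{\mathcal{L}_{K}} \beta$ iff $\alpha \vDash_{t,f} \beta$, exactly as required.

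There is no genuine obstacle at this final step: it is pure transitivity of ``if and only if,'' with no new construction or estimate needed. The real difficulty has already been absorbed into the preceding results, most notably the $(\Leftarrow)$ direction of Theorem \ref{thm16}, where the point valuations $v_{x}\colon \mathcal{F} \to \textbf{3}$ are manufactured from a valuation on $\mathcal{P}(U)^{[2]}$ and verified to respect $\vee$, $\wedge$ and $\sim$, and into the algebra-to-sets reductions of Theorem \ref{thm9} and Corollary \ref{cor1}. Accordingly, I expect the proof of this theorem to consist of no more than a one-line citation of those three facts.
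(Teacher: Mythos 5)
Your proposal is correct and matches the paper exactly: the paper states Theorem \ref{thm17} as an immediate consequence of Theorem \ref{thm9}, Corollary \ref{cor1} and Theorem \ref{thm16}, chained by transitivity of the biconditionals just as you describe. You also correctly identify that the substantive work lies in the earlier results, particularly the construction of the point valuations $v_{x}$ in Theorem \ref{thm16}.
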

%Let $U$ be set. Then $\mathcal{P}(U)^{[2]}$ is a Kleene algebra. Then for $\alpha \in \mathcal{F}$, $v(\alpha)$ is a pair of sets of the form $(A,B)$. Suppose for $\beta \in \mathcal F$,  $v(\beta): = (C,D)$. By definition, the consequent $\alpha \vdash \beta$ is valid in $\mathcal{P}(U)^{[2]}$ under  $v$, when $v(\alpha) \leq v(\beta)$, i.e., $(A,B) \leq (C,D) $, or $A \subseteq C$ and 
%$B\subseteq D$. 

  %Let $v$ 
%be a valuation on $\mathcal{P}(U)^{[2]}$.
%Let us now focus on  valuations on the Kleene algebra  $B^{[2]}$ of a given Boolean algebra. Using Stone's representation theorem
%each Boolean algebra can be embeddable into a Power set algebra.   

% 
% ===============================================================================================================
%\section{Kleene algebras, rough sets and 3-valued semantics} \label{section4}

\section{Rough set semantics for ${\mathcal{L}_{K}}$} \label{section4}

Rough set theory, introduced by Pawlak \cite{pawlak82} in 1982, deals with a domain $U$ (set of objects) and an equivalence ({\it indiscerniblity}) relation $R$
on $U$. %In Pawlakian rough sets theory, 
%The equivalence relation $R$ is interpreted as the {\it indiscerniblity} relation on the domain $U$. $xRy$ iff 
%$x$ is indiscernible to $y$ with respect to attributes present in the information system. 
The pair $(U,R)$ is called an (Pawlak) {\it approximation space}.  %or indiscernible space. 
For any $A \subseteq U$, one defines the {\it lower} and {\it upper approximations} of $A$ in the approximation space $(U,R)$, denoted ${\sf L}A$ and ${\sf U}A$ respectively,  as follows.\\
\hspace*{2 cm} ${\sf L}A = \bigcup \{[x]: [x]\subseteq X\}$,\\
\hspace*{2 cm} ${\sf U}A = \bigcup \{[x]: [x] \cap X \neq \emptyset\}$. \hspace{1 cm} \hfill{(*)}\\
As the information about the objects of the domain is available modulo the equivalence classes in $U$, description of any concept, represented extensionally as the subset $A$ of $U$, is inexact, and one `approximates' the description from within and outside, through the lower and upper approximations respectively.
\noindent Unions of equivalence classes are termed as {\it definable} sets, signifying exact description in the context of the given information. In particular, sets of the form 
${\sf L}A$, ${\sf U}A$ are definable sets. 
\begin{definition}
Let $(U,R)$ be an approximation space. For each $A \subseteq U$, the ordered pair $({\sf L}A,{\sf U}A)$ is called a {\rm rough set} in $(U,R)$.
\begin{notation}  $\mathcal{RS} := \{({\sf L}A,{\sf U}A): A \subseteq U\}$.
\end{notation}
The ordered pair $(D_{1},D_{2})$, where $D_{1} \subseteq D_{2}$ and $D_{1},D_{2}$ are definable sets, is called a {\rm generalized rough set} in $(U,R)$.
 \begin{notation}  $\mathcal{D} $ denotes the collection of definable sets and $\mathcal{R} $  that of the generalized rough sets in $(U,R)$.\end{notation}
\end{definition}

In the following, we proceed to establish part (ii) of Theorem \ref{mainthm} (cf. Section \ref{section1}). In Section \ref{section3-v}, we formalize  the connection of rough sets with the 3-valued semantics being considered in this work. We end the section with a rough set semantics  for ${\mathcal{L}_{K}}$ (cf. Theorem \ref{thm18A}), obtained as a consequence of the representation results of Section \ref{sec4.1} below.

\subsection{Rough set  representation of Kleene algebras}\label{sec4.1}

Algebraically, the collection $\mathcal{D} $ of definable sets forms a complete atomic Boolean algebra in which atoms are the equivalence classes. The collection $\mathcal{RS}$ forms a distributive lattice -- in fact, it forms a Kleene algebra.   On the other hand, observe that $\mathcal{R}$ is the set $\mathcal{D}^{[2]} $ and hence forms a Kleene algebra (cf. Proposition \ref{prop1}) as well. $\mathcal{R}$ has earlier been studied, for instance, by Banerjee and Chakraborty in \cite{BC96}, and shown to form  {\it topological quasi-Boolean}, {\it pre-rough} and {\it rough} algebras. Note that, for an approximation space $(U,R)$, as sets $\mathcal{R}$ and $\mathcal{RS}$ may not be the same. So, it is natural to ask how $\mathcal{R}$ and $\mathcal{RS}$ differ as algebraic structures. The following  result mentioned in \cite{BC96} gives a connection between the two. The proof is not given in \cite{BC96}; we sketch it here, as  it is used  in the sequel. 
\begin{theorem} \label{thm3.11}
For any approximation space $(U,R)$, there exists an approximation space $(U^{\prime},R^{\prime})$ such that $\mathcal{R}$ corresponding to $(U,R)$ is order isomorphic to $\mathcal{R}^{\prime}$ corresponding to
$(U^{\prime},R^{\prime})$. Further, $\mathcal{R}^{\prime}$ = $\mathcal{RS}^{\prime}$, the latter denoting the collection of rough sets in the approximation space $(U^{\prime},R^{\prime})$ .
\end{theorem}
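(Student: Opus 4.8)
The plan is to construct the new approximation space $(U^{\prime},R^{\prime})$ explicitly from $(U,R)$ in such a way that the upper and lower approximation operations become surjective onto the generalized rough sets, thereby forcing $\mathcal{R}^{\prime} = \mathcal{RS}^{\prime}$. The obstruction to having $\mathcal{R} = \mathcal{RS}$ in the original space is that a generalized rough set $(D_{1},D_{2})$ with $D_{1} \subseteq D_{2}$ need not be realizable as $({\sf L}A,{\sf U}A)$ for an actual $A \subseteq U$: the boundary $D_{2}\setminus D_{1}$ must be a union of equivalence classes each of which contains at least two elements, since a singleton class that meets $A$ is automatically contained in ${\sf L}A$. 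So the idea is to \emph{enlarge} every equivalence class so that none is a singleton, which removes exactly this defect.

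Concretely, I would take $U^{\prime} := U \times \{0,1\}$ (or adjoin one fresh ``duplicate'' point to each equivalence class) and define $R^{\prime}$ to be the equivalence relation whose classes are $[x]\times\{0,1\}$ for each $R$-class $[x]$. Thus every $R^{\prime}$-class has at least two elements. First I would verify that the Boolean algebra $\mathcal{D}^{\prime}$ of definable sets of $(U^{\prime},R^{\prime})$ is isomorphic, as a complete atomic Boolean algebra, to $\mathcal{D}$ — the atoms (equivalence classes) are in bijection via $[x] \mapsto [x]\times\{0,1\}$, and this bijection extends to a Boolean isomorphism of the full powers $\mathcal{D} \cong \mathcal{D}^{\prime}$. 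Since $\mathcal{R} = \mathcal{D}^{[2]}$ and $\mathcal{R}^{\prime} = (\mathcal{D}^{\prime})^{[2]}$ as observed just before the theorem, this Boolean isomorphism induces an order isomorphism $\mathcal{R} \cong \mathcal{R}^{\prime}$ by acting coordinatewise on pairs, establishing the first assertion.

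For the second assertion, $\mathcal{R}^{\prime} = \mathcal{RS}^{\prime}$, I would show that every generalized rough set $(D_{1},D_{2}) \in \mathcal{R}^{\prime}$ is realized as $({\sf L}A,{\sf U}A)$ for a suitable $A \subseteq U^{\prime}$. Given definable $D_{1}\subseteq D_{2}$ in $(U^{\prime},R^{\prime})$, I would set $A := D_{1} \cup A_{0}$, where $A_{0}$ selects exactly one point from each $R^{\prime}$-class contained in $D_{2}\setminus D_{1}$; because each such class has at least two elements, the selected singleton makes that class meet $A$ without being contained in it. Then a direct check of the definitions in (*) gives ${\sf L}A = D_{1}$ (the fully-included classes are exactly those in $D_{1}$) and ${\sf U}A = D_{2}$ (the classes meeting $A$ are exactly those in $D_{2}$). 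Hence $\mathcal{R}^{\prime} \subseteq \mathcal{RS}^{\prime}$; the reverse inclusion $\mathcal{RS}^{\prime} \subseteq \mathcal{R}^{\prime}$ is immediate since ${\sf L}A$ and ${\sf U}A$ are always definable with ${\sf L}A \subseteq {\sf U}A$.

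The main obstacle I anticipate is purely in the realizability step: one must be careful that the chosen representatives in $A_{0}$ do \emph{not} inadvertently enlarge the lower approximation (which is why at least two points per boundary class is essential) and that no class outside $D_{2}$ is accidentally met by $A$ (guaranteed since $A \subseteq D_{2}$). Once the two-element-class property is secured by the duplication construction, these verifications are routine case-checks against the definitions of ${\sf L}$ and ${\sf U}$. The only subtlety worth flagging is that for the isomorphism $\mathcal{R}\cong\mathcal{R}^{\prime}$ to be an \emph{order} isomorphism compatible with later use, one should confirm it respects the coordinatewise order on pairs, which follows immediately from the Boolean isomorphism being order-preserving on each coordinate.
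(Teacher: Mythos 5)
Your proposal is correct and takes essentially the same approach as the paper: the paper likewise adjoins fresh ``dummy'' duplicate points to kill singleton equivalence classes (though it does so more economically, adding a duplicate only for each $a$ in $\textbf{A} := \{a \in U : |R(a)| = 1\}$ rather than doubling every class), and obtains the order isomorphism by transporting pairs of definable sets coordinatewise. Your explicit realizability check for $\mathcal{R}^{\prime} = \mathcal{RS}^{\prime}$ --- taking $A := D_{1} \cup A_{0}$ with one representative chosen from each boundary class --- is exactly the step the paper's sketch leaves implicit, and it is correct.
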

\begin{proof}
Let $(U,R)$ be the given approximation space. Consider the set $\textbf{A} := \{a\in U: |R(a)| = 1\}$, where $R(a)$ denotes the equivalence class of $a$ in $U$. So $\textbf{A}$ is the collection of all elements which are $R$-related only to themselves. Now, let us construct a set $\textbf{A}^{\prime}$ which consists of `dummy' elements not in $U$, indexed by the set $\textbf{A}$, i.e., $\textbf{A}^{\prime} := \{a^{\prime}: a \in \textbf{A}\}$. Let $U^{\prime} = U \cup \textbf{A}^{\prime}$.
Define an equivalence relation $R^{\prime}$ on $U^{\prime}$ as follows.\\
\hspace*{1 cm} If $a \in U$ then $R^{\prime}(a) := R(a) \cup \{x^{\prime}\in \textbf{A}^{\prime}: x \in R(a)\cap \textbf{A}\}$. \\
\hspace*{1 cm} If $a^{\prime} \in \textbf{A}^{\prime}$ then $R^{\prime}(a^{\prime}) := R(a)~(=\{a,a^{\prime}\})$.\\
\noindent Note that the number of equivalence classes in both the approximation spaces is the same.
Define the map $\phi: \mathcal{R} \rightarrow \mathcal{R}^{\prime}$ as $\phi(D_{1},D_{2}) := (D_{1}^{\prime},D_{2}^{\prime})$, where $D_{1}^{\prime} := D_{1} \cup \{x^{\prime}\in \textbf{A}^{\prime}: x \in D_{1}\cap \textbf{A}\}$ and $D_{2}^{\prime} := D_{2} \cup \{x^{\prime}\in \textbf{A}^{\prime}: x \in D_{2}\cap \textbf{A}\}$. Then $\phi$ is an order isomorphism.\qed
\end{proof}

Since $\mathcal{R}$ and $\mathcal{RS}$ for any approximation space $(U,R)$ form Kleene algebras, Theorem \ref{thm3.11} can easily be extended to Kleene algebras as follows.
\begin{theorem} \label{thm12}
Let $(U,R)$ be an approximation space. 
%\begin{enumerate} 
There exists an approximation space $(U^{\prime},R^{\prime})$ such that $\mathcal{R}$  corresponding to $(U,R)$ is Kleene isomorphic to $\mathcal{RS}^{\prime}~(= \mathcal{R}^{\prime})$ corresponding to
$(U^{\prime}, R^{\prime})$. 
%\item $\mathcal{R}^{\prime}$ = $\mathcal{RS}^{\prime}$. In other words $\mathcal{R}$ is Kleene isomorphic to $\mathcal{RS}^{\prime}$.
%\end{enumerate}
\end{theorem}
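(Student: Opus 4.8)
The plan is to leverage Theorem~\ref{thm3.11}, which already does almost all the work, and simply upgrade the order isomorphism it provides to a Kleene isomorphism. First I would recall that by Theorem~\ref{thm3.11}, given the approximation space $(U,R)$ there exists an approximation space $(U^{\prime},R^{\prime})$ together with an order isomorphism $\phi: \mathcal{R} \rightarrow \mathcal{R}^{\prime}$, and crucially that for this new space the collection of generalized rough sets coincides with the collection of rough sets, i.e. $\mathcal{R}^{\prime} = \mathcal{RS}^{\prime}$. So the underlying posets are already identified; what remains is to verify that the identification respects the Kleene-algebra operations $\vee$, $\wedge$, $\sim$ and the constants.

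The key observation that makes this immediate is a general lattice-theoretic fact about De Morgan (and hence Kleene) algebras: an order isomorphism between two such algebras automatically preserves the lattice operations $\vee$ and $\wedge$, since these are determined by the order. I would then argue that $\sim$ is likewise order-theoretically determined here. In each of $\mathcal{R}$ and $\mathcal{R}^{\prime}$, the negation is the standard one on pairs of definable sets, $\sim(D_1,D_2) = (D_2^{c}, D_1^{c})$ (cf.\ Proposition~\ref{prop1}, since both are of the form $\mathcal{D}^{[2]}$). A short computation using the explicit formula for $\phi$ from the proof of Theorem~\ref{thm3.11}, namely $\phi(D_1,D_2) = (D_1^{\prime}, D_2^{\prime})$ where the primed sets adjoin the corresponding dummy elements, shows that complementation in $\mathcal{D}$ is compatible with the passage to $\mathcal{D}^{\prime}$; concretely, $(D^{c})^{\prime} = (D^{\prime})^{c}$ relative to the respective universes, because the dummy element $a^{\prime}$ lies in $D^{\prime}$ precisely when $a \in D$ (for $a \in \textbf{A}$), so taking complements and adjoining dummies commute. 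Hence $\phi(\sim(D_1,D_2)) = \sim\phi(D_1,D_2)$, and $\phi$ sends the bounds $(\emptyset,\emptyset)$ and $(U,U)$ to $(\emptyset,\emptyset)$ and $(U^{\prime},U^{\prime})$, the bounds of $\mathcal{R}^{\prime}$.

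The only step that requires genuine care, and which I expect to be the main obstacle, is the verification that $\phi$ commutes with $\sim$ at the level of the concrete dummy-element construction: one must check that complementing a definable set and then adjoining its dummy copies yields the same definable set of $U^{\prime}$ as first adjoining dummies and then complementing. This hinges on the fact that the definable sets of $U^{\prime}$ are exactly the sets of the form $D \cup \{x^{\prime} : x \in D \cap \textbf{A}\}$ for $D$ definable in $U$, so that the complement within $U^{\prime}$ of such a set is again of this form, with $D$ replaced by its complement $D^{c}$ in $U$ — a point that should be spelled out but is routine given the structure of $R^{\prime}$. Once this compatibility is in hand, the abstract principle applies: an order isomorphism between two Kleene algebras that additionally preserves $\sim$ and the bounds is a Kleene isomorphism (the operations $\vee,\wedge$ come for free from the order). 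Therefore $\phi$ is the desired Kleene isomorphism from $\mathcal{R}$ onto $\mathcal{RS}^{\prime} = \mathcal{R}^{\prime}$, which completes the proof.
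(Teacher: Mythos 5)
Your proposal is correct and follows essentially the same route as the paper: take $\phi$ from Theorem~\ref{thm3.11}, and reduce preservation of $\sim$ to the claim that complementation commutes with adjoining dummy elements, i.e.\ $(D^{c})^{\prime} = (D^{\prime})^{c}$ relative to the respective universes, which is exactly the Claim the paper proves by case analysis. The only (harmless) divergence is that you justify preservation of $\vee$ and $\wedge$ by the general fact that order isomorphisms preserve lattice operations, whereas the paper re-derives the lattice isomorphism via completely join irreducible elements (Proposition~\ref{prop3} and Lemma~\ref{lemma2.1}); your shortcut is valid and, if anything, more direct.
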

\begin{proof}
%Let $(U^{\prime},R^{\prime})$ and the map 
Consider $(U^{\prime},R^{\prime})$ and $\phi$ as in Theorem \ref{thm3.11}.  $\phi$ is a lattice isomorphism, as the  restriction of $\phi$ to the completely join irreducible elements of the lattices $\mathcal{D}^{[2]} $ and $\mathcal{D}^{\prime [2]} $ is an order isomorphism (using Proposition \ref{prop3} and Lemma \ref{lemma2.1}). Let us now show that $\phi(\sim(D_{1},D_{2})) = \sim( \phi(D_{1},D_{2}))$.
To avoid confusion, we follow these notations: for  $X \subseteq U$ we use $X^{c_{1}}$ for the complement in $U$ and $X^{c_{2}}$ for the complement in $U^{\prime}$.

\noindent Now, $\phi(\sim(D_{1},D_{2}))  = \phi(D_{2}^{c_{1}},D_{1}^{c_{1}}) = ((D_{2}^{c_{1}})^{\prime},(D_{1}^{c_{1}})^{\prime})$.  By definition of $\phi$, we have:\\
\hspace*{2 cm} $(D_{2}^{c_{1}})^{\prime} = D_{2}^{c_{1}} \cup \{x^{\prime} \in \textbf{A}^{\prime}: x \in D_{2}^{c_{1}} \cap \textbf{A}\}$.\\
\hspace*{2 cm} $(D_{1}^{c_{1}})^{\prime} = D_{1}^{c_{1}} \cup \{x^{\prime} \in \textbf{A}^{\prime}: x \in D_{1}^{c_{1}} \cap \textbf{A}\}$.
\begin{claim}
 $(D_{2}^{c_{1}})^{\prime} = (D_{2}^{\prime})^{c_{2}}$, and $(D_{1}^{c_{1}})^{\prime} = (D_{1}^{\prime})^{c_{2}}$.
\end{claim}
{\it Proof of Claim}:
Let us first prove that\\
$(D_{2}^{c_{1}})^{\prime} = D_{2}^{c_{1}} \cup \{x^{\prime} \in \textbf{A}^{\prime}: 
x \in D_{2}^{c_{1}} \cap \textbf{A}\} = (D_{2}^{\prime})^{c_{2}} = 
(D_{2} \cup \{x^{\prime}: x \in D_{2}\cap \textbf{A}\})^{c_{2}} = 
(D_{2})^{c_{2}} \cap (\{x^{\prime} \in \textbf{A}^{\prime}: x\in D_{2}\cap \textbf{A}\})^{c_{2}}$. 

\noindent Let $X := \{x^{\prime} \in \textbf{A}^{\prime}: 
x \in D_{2}^{c_{1}} \cap \textbf{A}\}$ and $Y := \{x^{\prime} \in \textbf{A}^{\prime}: x\in D_{2}\cap \textbf{A}\}$.

\noindent Consider $a \in (D_{2}^{c_{1}})^{\prime} = D_{2}^{c_{1}} \cup X$.\\
\un{Case 1} $a \in D_{2}^{c_{1}}$:\\
\noindent As, $D_{2} \subseteq U$, $D_{2}^{c_{1}} \subseteq D_{2}^{c_{2}}$. Hence $a \in D_{2}^{c_{2}}$.
As $D_{2}^{c_{1}} \subseteq U$, $a \notin \textbf{A}^{\prime}$, whence 
%So, $a \in (\textbf{A}^{\prime})^{c_{2}} \subseteq 
$a \in Y^{c_{2}}$.
So $a \in (D_{2}^{\prime})^{c_{2}}$.\\
\un{Case 2} $a \in X$:\\
 $a = x^{\prime}$, where $x \in D_{2}^{c_{1}} \cap \textbf{A}$. As, $x^{\prime} \in R^{\prime}(x)$ and $D_{2}^{c_{2}}$ is the union of equivalence classes, in particular it contains $R^{\prime}(x)$. So $a = x^{\prime} \in D_{2}^{c_{2}}$.\\
 $x \in D_{2}^{c_{1}}$ implies $x \notin D_{2}$. Hence $a = x^{\prime} \in Y^{c_{2}}$.
So $a \in (D_{2}^{\prime})^{c_{2}}$.\\
Conversely,  let $a \in (D_{2}^{\prime})^{c_{2}} = %(D_{2})^{c_{2}} \cap (\{x^{\prime}: x\in D_{2}\cap \textbf{A}^{\prime}\})^{c_{2}} = 
(D_{2})^{c_{2}} \cap Y^{c_{2}}$.   \\
\un{Case 1}  $a \in U$:\\
$a \in D_{2}^{c_{2}} \Rightarrow a \in D_{2}^{c_{1}}$. Hence $a \in (D_{2}^{c_{1}})^{\prime}$.\\
\un{Case 2} $a \in \textbf{A}^{\prime}$:\\
$a \in Y^{c_{2}}$ implies
%$a \notin Y = \{x^{\prime}: x \in D_{2}\cap \textbf{A}^{\prime}\} \Rightarrow 
$a \in \{x^{\prime} \in \textbf{A}^{\prime}: x \in D_{2}^{c_{1}} \cap \textbf{A}\}$.  Hence $a \in (D_{2}^{c_{1}})^{\prime}$.

%\noindent Finally, we have proved $(D_{2}^{c_{1}})^{\prime} = (D_{2}^{\prime})^{c_{2}}$.

\noindent Similar arguments as above  show that $(D_{1}^{c_{1}})^{\prime} = (D_{1}^{\prime})^{c_{2}}$.\vskip 3pt
%\end{proof}
\noindent {\it Proof of Theorem \ref{thm12}}: \\$\phi(\sim(D_{1},D_{2}))  = \phi(D_{2}^{c_{1}},D_{1}^{c_{1}}) = ((D_{2}^{c_{1}})^{\prime},(D_{1}^{c_{1}})^{\prime}) = ((D_{2}^{\prime})^{c_{2}},(D_{1}^{\prime})^{c_{2}}) = \sim \phi(D_{1},D_{2})$.

\noindent Hence $\phi$ is a Kleene isomorphism.
\qed
\end{proof}
% ====================================================================================================================================================================================
\noindent It is now not hard to see the correspondence between a complete atomic Boolean algebra and rough sets in an approximation space. %Let us denote by $\mathcal{D}$ the collection of definable sets for some approximation space $(U,R)$.

\begin{theorem}\label{thm13}
Let $B$ be a complete atomic Boolean algebra. 
\blr
\item There exists an approximation space $(U,R)$ such that
      \bla
			\item $B \cong \mathcal{D}$.
			\item $B^{[2]}$ is Kleene isomorphic to $\mathcal{R}$.
			\ela
\item There exists an approximation space $(U^{\prime},R^{\prime})$ such that $B^{[2]} $ is 
      Kleene isomorphic to $\mathcal{RS}^{\prime}$.
\elr
\end{theorem}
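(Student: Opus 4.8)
The plan is to reduce both parts to the representation already assembled in this section, exploiting the fact that a complete atomic Boolean algebra is completely determined by its atoms. First I would note that, since $B$ is complete and atomic, it is isomorphic to the full power set algebra on its set of atoms; write $U$ for the set of atoms of $B$ and fix a Boolean isomorphism $\psi: B \rightarrow \mathcal{P}(U)$. To realize $B$ as a collection of definable sets, I would take the approximation space $(U,R)$ in which $R$ is the identity (diagonal) relation on $U$. Then each equivalence class is a singleton, so every subset of $U$ is a union of equivalence classes, whence $\mathcal{D} = \mathcal{P}(U)$. Composing with $\psi$ gives $B \cong \mathcal{D}$, which is part (i)(a).

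For part (i)(b), I would lift the Boolean isomorphism $\psi: B \rightarrow \mathcal{D}$ coordinatewise to the map $\Psi: B^{[2]} \rightarrow \mathcal{D}^{[2]} = \mathcal{R}$ given by $\Psi(a,b) := (\psi(a),\psi(b))$. Since $\psi$ is an order isomorphism, $a \leq b$ forces $\psi(a) \leq \psi(b)$, so $\Psi$ indeed lands in $\mathcal{D}^{[2]}$ and is a bijection. Because the lattice operations on $B^{[2]}$ and on $\mathcal{D}^{[2]}$ are computed coordinatewise (cf. Proposition \ref{prop1}) and $\psi$ preserves Boolean complement, $\Psi$ also respects $\sim(a,b) = (b^{c},a^{c})$. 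Hence $\Psi$ is a Kleene isomorphism and $B^{[2]}$ is Kleene isomorphic to $\mathcal{R}$.

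For part (ii), I would simply feed the approximation space $(U,R)$ constructed in part (i) into Theorem \ref{thm12}: this produces an approximation space $(U^{\prime},R^{\prime})$ for which $\mathcal{R}$ corresponding to $(U,R)$ is Kleene isomorphic to $\mathcal{RS}^{\prime} (= \mathcal{R}^{\prime})$ corresponding to $(U^{\prime},R^{\prime})$. Chaining the Kleene isomorphisms $B^{[2]} \cong \mathcal{R} \cong \mathcal{RS}^{\prime}$ then finishes the proof; note that the diagonal relation used in part (i) makes $\mathcal{R}$ genuinely larger than $\mathcal{RS}$ for $(U,R)$, which is exactly why the passage to $(U^{\prime},R^{\prime})$ is needed here.

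As for difficulty, the construction itself is light: choosing the diagonal relation makes (i)(a) immediate, and (ii) is essentially an application of the already-proved Theorem \ref{thm12}. The only step requiring genuine (though routine) verification is that the coordinatewise lift $\Psi$ preserves the Kleene negation $\sim$, which hinges on $\psi$ preserving Boolean complement; this is the point at which the argument would fail were one to use an arbitrary order isomorphism rather than a Boolean one. I do not anticipate a serious obstacle.
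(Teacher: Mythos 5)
Your proposal is correct and takes essentially the same route as the paper, whose entire proof is the one-line construction you give: take $U$ to be the set of atoms of $B$ and $R$ the identity relation on $U$. The coordinatewise lift $\Psi$, the check that it preserves $\sim$, and the appeal to Theorem \ref{thm12} for part (ii) are exactly the details the paper leaves implicit, and you have filled them in correctly.
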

\begin{proof}
Let $U $ denote the collection of all atoms of B, and $R$  the identity relation on $U$.  $(U,R)$ is the required approximation space.\qed
\end{proof}

Thus Theorem \ref{thm2.1} can be rephrased in terms of rough sets, and we get  Theorem \ref{mainthm}(ii).
\begin{corollary} \label{cor2}
Given a Kleene algebra $\mathcal{K}$, there exists an approximation space $(U,R)$ such that 
$\mathcal{K}$ can be embedded into $\mathcal{RS}$. In other words, every Kleene algebra is isomorphic to an algebra of rough sets in a Pawlak  approximation space.
\end{corollary}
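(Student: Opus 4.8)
The plan is to realize the embedding as a composition of maps each furnished by a previously established result; since the statement is a corollary, no new structural insight is required, only a careful chaining of Theorem~\ref{thm2.1}, Stone's representation, and Theorem~\ref{thm13}. First I would apply Theorem~\ref{thm2.1} to the given Kleene algebra $\mathcal{K}$, obtaining a Boolean algebra $\mathcal{B_K}$ and a Kleene embedding $\iota_0\colon \mathcal{K}\hookrightarrow \mathcal{B_K}^{[2]}$. As Theorem~\ref{thm13} requires a \emph{complete atomic} Boolean algebra, I would then use Stone's representation to embed $\mathcal{B_K}$, via a Boolean embedding $\iota\colon\mathcal{B_K}\hookrightarrow\mathcal{P}(U)$, into the power set algebra of a suitable set $U$, which is complete and atomic. (Alternatively, one may observe that the $\mathcal{B_K}$ produced in the proof of Theorem~\ref{thm2.1} is $\textbf{2}^{I}$, already complete atomic, so this step can be bypassed.)

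The middle step is to lift $\iota$ to the pair-constructions by setting $\iota^{[2]}(a,b):=(\iota(a),\iota(b))$. This map $\iota^{[2]}\colon\mathcal{B_K}^{[2]}\to\mathcal{P}(U)^{[2]}$ is well defined, since $a\leq b$ gives $\iota(a)\leq\iota(b)$, and it is a Kleene embedding: it preserves $\vee$ and $\wedge$ coordinatewise, it is injective because $\iota$ is, it carries the bounds $(0,0),(1,1)$ to the corresponding bounds, and it commutes with $\sim$ precisely because $\iota$ preserves Boolean complement, so that $\iota^{[2]}(\sim(a,b))=(\iota(b^{c}),\iota(a^{c}))=(\iota(b)^{c},\iota(a)^{c})=\sim\iota^{[2]}(a,b)$. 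This coordinatewise verification is the only computation in the argument and presents no real obstacle; the genuine content lives in the earlier theorems.

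Finally, applying Theorem~\ref{thm13}(ii) to the complete atomic Boolean algebra $\mathcal{P}(U)$ yields an approximation space $(U',R')$ together with a Kleene isomorphism $\psi\colon\mathcal{P}(U)^{[2]}\to\mathcal{RS}'$ onto the collection of rough sets there. The composite $\psi\circ\iota^{[2]}\circ\iota_0\colon\mathcal{K}\to\mathcal{RS}'$ is then an injective Kleene homomorphism, i.e.\ a Kleene embedding, and renaming $(U',R')$ as $(U,R)$ delivers the claimed embedding of $\mathcal{K}$ into $\mathcal{RS}$. If any step can be called the crux, it is the passage, already handled in Theorem~\ref{thm13} via Theorem~\ref{thm12}, from abstract pairs in $B^{[2]}$ to honest pairs $({\sf L}A,{\sf U}A)$ of rough sets; once that is in hand the corollary follows by pure composition.
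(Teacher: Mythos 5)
Your proof is correct and takes essentially the same route as the paper, which obtains the corollary simply by composing Theorem~\ref{thm2.1} with Theorem~\ref{thm13}(ii). Your extra Stone-representation step (with the coordinatewise lift $\iota^{[2]}$) is a harmless way to secure the complete-atomicity hypothesis, but as you note yourself it can be bypassed, since the Boolean algebra produced in the proof of Theorem~\ref{thm2.1} is $\textbf{2}^{I}$, already complete and atomic, so the paper passes directly from $\mathcal{K}\hookrightarrow(\textbf{2}^{I})^{[2]}$ to $\mathcal{RS}$.
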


%%%%%%%%%%%%%%%%%%%%%%%%%%

\subsection{Rough sets and the Kleene algebra  $\textbf{3}$} \label{section3-v} %: 3-valued semantics through  rough set semantics} 

\noindent The definitions  $(*)$ of lower and upper approximations of a set $A$ in an approximation space $(U,R)$ (cf. beginning of Section \ref{section4}), immediately yield the following interpretations.
\begin{enumerate}
\item \label{t} $x$  {\it certainly} belongs to $A$, if $x \in {\sf L}A$,  i.e. all  objects which are indiscernible to $x$
are in $A$.

\item \label{f} $x$  {\it certainly does not} belong to $A$, if $x \notin {\sf U}A$,  i.e. all objects which are indiscernible to $x$
are not in  $A$.

\item \label{p} Belongingness of $x$ to $A$ is {\it not certain, but possible}, %is possibly belongs to $A,$ 
if $x \in {\sf U}A$ but $x \notin {\sf L}A$. In rough set terminology, this is the case when $x$ is in the {\it boundary} of $A$: some objects indiscernible
to $x$ are in $A$, while some others, also indiscernible to $x$, are in $A^{c}$.
\end{enumerate}
\noindent  These interpretations have led to much work in the study of connections between 3-valued algebras or logics and rough sets, see for instance  \cite{Banerjee97,Iturr99,Pa98,Avron08,Ciucci14,Dunt97}. In particular, in \cite{Avron08}, Avron and Konikowska have %studied 3-valued logic with respect to the above interpretations, 
 obtained a non-deterministic logical matrix and studied the 3-valued logic generated by this matrix, %{\color{blue}the logic they obtained 
which is a sort of First order logic. %} {\color{blue}
In the direction 
of propositional logic, Banerjee and Chakraborty in \cite{BC96,Banerjee97} obtained {\it pre-rough logic} for the class of pre-rough algebras. It was subsequently proved by Banerjee in \cite{Banerjee97} that 3-valued {\L}ukasiewicz logic and pre-rough logic are equivalent, thereby imparting a rough set semantics to the former.

Let us  spell out the natural connections of the Kleene algebra $\textbf{3}$ with rough sets.

\vskip 2pt Observe that  $\textbf{3}$,  being isomorphic to $\textbf{2}^{[2]}$ (as noted earlier),  can also be viewed as a collection of rough sets in an approximation space, due to Theorem \ref{thm13}(ii). 
%\ref{lemma3.3} 

\begin{proposition} 
%\label{lemma3.3}
There exists an approximation space $(U,R)$ such that $\textbf{3} \cong \mathcal{RS}$.
\end{proposition}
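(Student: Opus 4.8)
The plan is to read the statement off the representation machinery already in place, specialized to the smallest nontrivial case. Since $\textbf{2}$, the two-element Boolean algebra, is trivially complete and atomic, Theorem \ref{thm13}(ii) applies to it directly: there is an approximation space $(U,R)$ for which $\textbf{2}^{[2]}$ is Kleene isomorphic to the collection $\mathcal{RS}$ of rough sets in $(U,R)$. Combining this with the isomorphism $\textbf{3} \cong \textbf{2}^{[2]}$ established in Example \ref{eg3}, I would compose the two Kleene isomorphisms to obtain $\textbf{3} \cong \mathcal{RS}$, which is exactly the claim. So at the level of invoking prior results the proof is essentially a one-line specialization.

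To make the result concrete, and to guard against a subtle pitfall, I would also trace through which approximation space the construction actually produces. Following the proof of Theorem \ref{thm13}(i), the atom set of $\textbf{2}$ is a single point, and the identity relation on it yields only $\mathcal{RS}\cong\textbf{2}$, not $\textbf{3}$: with no boundary, the middle element $a$ of $\textbf{3}$ cannot be realized. This is precisely the gap closed by Theorem \ref{thm12} via the dummy-element construction of Theorem \ref{thm3.11}, which replaces $(U,R)$ by a space $(U^{\prime},R^{\prime})$ satisfying $\mathcal{R}^{\prime} = \mathcal{RS}^{\prime}$. Unwinding that construction on the single-point space gives $U^{\prime} = \{x,y\}$ with the single equivalence class $R^{\prime} = U^{\prime}\times U^{\prime}$, i.e. a two-element domain in which the two points are mutually indiscernible.

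An alternative, fully self-contained route I would consider is to bypass the general theorems and simply exhibit $(U,R)$ with $U=\{x,y\}$ and $R = U\times U$, then compute $\mathcal{RS}$ by hand. Here the only definable sets are $\emptyset$ and $U$, and the four subsets of $U$ yield exactly the three rough sets $(\emptyset,\emptyset)$, $(\emptyset,U)$ and $(U,U)$; these form a three-element chain. Checking that the Kleene negation $\sim({\sf L}A,{\sf U}A) = (({\sf U}A)^{c},({\sf L}A)^{c})$ matches the negation on $\textbf{3}$ under the obvious bijection $0\mapsto(\emptyset,\emptyset)$, $a\mapsto(\emptyset,U)$, $1\mapsto(U,U)$ then completes the isomorphism. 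I do not expect a genuine obstacle in this proposition; the one point worth stating carefully is the observation above, namely that a genuine (nonsingleton) equivalence class is needed, so that a \emph{boundary} exists to carry the intermediate truth value $a$.
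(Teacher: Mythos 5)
Your proposal is correct and matches the paper: the paper's proof simply exhibits $U=\{x,y\}$ with $R=U\times U$ and displays the correspondence $0\mapsto(\emptyset,\emptyset)$, $a\mapsto(\emptyset,U)=({\sf L}\{x\},{\sf U}\{x\})$, $1\mapsto(U,U)$, which is exactly your self-contained route, while your appeal to Theorem \ref{thm13}(ii) together with $\textbf{3}\cong\textbf{2}^{[2]}$ mirrors the paper's remark immediately preceding the proposition. Your observation that the single-atom space from Theorem \ref{thm13}(i) gives only $\mathcal{RS}\cong\textbf{2}$ and that the dummy-element construction of Theorem \ref{thm3.11} is what forces the two-point class is an accurate and worthwhile clarification of why that particular space arises.
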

\begin{proof}
Let $U := \{x,y\}$ and consider the equivalence relation  $R:=U \times U$ on $U$. The correspondence is depicted in  Figure \ref{fig5}.
%\qed
%\ref{fig5}.
%, $xRx,xRy,yRy,yRx$. 
%Then $\textbf{3} \cong \mathcal{RS}$, depicted in
\begin{figure}[h]
\begin{tikzpicture}[scale=.75]
    \draw [dotted] (0,1) -- (0,1);

    \draw (4,0) -- (4,2);
		\draw (4,2) -- (4,4);
		\draw [fill] (4,0) circle [radius = 0.1];
		\node [below] at (4,0) {$f = \sim t$};  
		%= (\emptyset,\emptyset) = ({\sf L}\emptyset,{\sf U}\emptyset)$};
		%\draw (3,0) node [label=below:{\it 0 = $\sim$ 1}]{};
		%\draw (3,2) node [label=right:{\it $a = \sim a$}]{};
		\draw [fill] (4,2) circle [radius = 0.1];
		\node [right] at (4,2) {$u = \sim u $};
		%= (\emptyset,U) = ({\sf L}x,{\sf U}x) = ({\sf L}y,{\sf U}y)$};
		%\draw (3,4) node [label=above:{\it $1 = \sim 0$}]{};
		\draw [fill] (4,4) circle [radius = 0.1];
		\node [above] at (4,4) {$t = \sim f $};
		%= (U,U) = ({\sf L}U,{\sf U}U)$};
		\node [right] at (2,2) {$\textbf{3}$ := };
		
		\node [right] at (8,2) {$\cong$};
		
		\node [right] at (10,2) {$\mathcal{RS}$:=} ;
		
		\draw (12,0) -- (12,2);
		\draw (12,2) -- (12,4);
		\draw [fill] (12,0) circle [radius = 0.1];
		\node [below] at (12,0) {$(\emptyset,\emptyset) = ({\sf L}\emptyset,{\sf U}\emptyset)$};
			\node [below] at (13,-0.5) {$ = \sim ({\sf L}U,{\sf U}U)$};  
		%= (\emptyset,\emptyset) = ({\sf L}\emptyset,{\sf U}\emptyset)$};
		%\draw (3,0) node [label=below:{\it 0 = $\sim$ 1}]{};
		%\draw (3,2) node [label=right:{\it $a = \sim a$}]{};
		\draw [fill] (12,2) circle [radius = 0.1];
		\node [right] at (12,2) {$(\emptyset,U) = ({\sf L}x,{\sf U}x) $};
		\node [below] at (14,2) {$= \sim({\sf L}x,{\sf U}x) $};
		%= (\emptyset,U) = ({\sf L}x,{\sf U}x) = ({\sf L}y,{\sf U}y)$};
		%\draw (3,4) node [label=above:{\it $1 = \sim 0$}]{};
		\draw [fill] (12,4) circle [radius = 0.1];
		\node [above] at (12,4) {$(U,U) = ({\sf L}U,{\sf U}U) = \sim({\sf L}\emptyset,{\sf U}\emptyset)$};

\end{tikzpicture}
\caption{$\textbf{3} \cong \mathcal{RS}$}\qed
\label{fig5}
\end{figure}
\end{proof}
% ===================================================================================================

\noindent On the other hand, interpretations 1-3 above  give rise to a correspondence  with the set $\textbf{3}:= \{f,u,t\}$, that assigns to every  $x \in U$ and rough set  $({\sf L}A,{\sf U}A)$ in $(U,R)$,  the value $t$ when   $x \in {\sf L}A$, $u$ when $x \in {\sf U}A \setminus {\sf L}A$, and $f$ in case  $x \notin {\sf U}A$. 
As one can see, this is akin to the valuation defined  in the proof of Theorem \ref{thm16}. In fact, using results of the previous sections, we  can  formally link the 3-valued semantics being considered here,  and rough sets. \vskip 2pt

Let $\mathcal{A}_{K\mathcal{RS}}$ denote the class containing the collections   $\mathcal{RS}$ of rough sets over {\it all} possible approximation spaces $(U,R)$. 
%One uses Corollary \ref{cor2} in place of Theorem  \ref{thm2.1} or Theorem \ref{thm13}(ii) in the proof of Theorem \ref{thm10}.

\begin{theorem} For any 
$\alpha,\beta \in \mathcal{F}$, \vskip 2pt
\blr \item $\alpha \vDash_{\mathcal{A}_{K}} \beta$ if and only if $\alpha \vDash_{\mathcal{A}_{K\mathcal{RS}}} \beta$, \vskip 2pt
\item $\alpha \vDash_{\mathcal{A}_{K\mathcal{RS}}} \beta$ if and only if  $\alpha \vDash_{t,f} \beta$.
\elr
\end{theorem}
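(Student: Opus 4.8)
The plan is to treat the two biconditionals in turn, reusing the embedding and valuation machinery already assembled. For (i), the forward implication is immediate: each collection $\mathcal{RS}$ of rough sets is itself a Kleene algebra (as recorded at the start of Section~\ref{sec4.1}), so $\mathcal{A}_{K\mathcal{RS}}$ is a subclass of $\mathcal{A}_{K}$, and validity across all Kleene algebras descends to validity across the rough-set algebras. For the converse I would replicate the proof of Theorem~\ref{thm10}. Given any Kleene algebra $(K,\vee,\wedge,\sim,0,1)$ and a valuation $v$ on it, Corollary~\ref{cor2} furnishes an approximation space $(U,R)$ together with a Kleene embedding $\phi: K \hookrightarrow \mathcal{RS}$. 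Since $\phi$ preserves $\vee,\wedge,\sim,0,1$, the composite $\phi\circ v$ is a valuation on $\mathcal{RS}$; applying the hypothesis $\alpha \vDash_{\mathcal{A}_{K\mathcal{RS}}} \beta$ gives $\phi(v(\alpha)) \leq \phi(v(\beta))$, and as an injective lattice homomorphism is an order embedding, this forces $v(\alpha) \leq v(\beta)$. Since $K$ and $v$ were arbitrary, $\alpha \vDash_{\mathcal{A}_{K}} \beta$.

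For (ii), the shortest route is to chain the equivalences already proved. Corollary~\ref{cor1} together with Theorem~\ref{thm16} yields $\alpha \vDash_{\mathcal{A}_{K}} \beta$ if and only if $\alpha \vDash_{t,f} \beta$; composing this with (i) gives $\alpha \vDash_{\mathcal{A}_{K\mathcal{RS}}} \beta$ iff $\alpha \vDash_{\mathcal{A}_{K}} \beta$ iff $\alpha \vDash_{t,f} \beta$, which is exactly the claim.

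If instead a self-contained argument for (ii) is wanted, I would mirror the proof of Theorem~\ref{thm16} in the rough-set setting. For the forward direction I would use the isomorphism $\textbf{3} \cong \mathcal{RS}$ exhibited just above (for $U=\{x,y\}$, $R = U\times U$): a valuation $v:\mathcal{F}\to\textbf{3}$ transports to a valuation on that $\mathcal{RS}$, so $\alpha \vDash_{\mathcal{A}_{K\mathcal{RS}}} \beta$ delivers $v(\alpha)\leq v(\beta)$, hence truth and falsity preservation. For the converse, starting from a valuation $v$ on some $\mathcal{RS}$ with $v(\gamma)=:(D_1^{\gamma},D_2^{\gamma})$, I would define for each $x\in U$ the point valuation $v_x:\mathcal{F}\to\textbf{3}$ by setting $v_x(\gamma)$ equal to $t$, $u$ or $f$ according as $x\in D_1^{\gamma}$, $x\in D_2^{\gamma}\setminus D_1^{\gamma}$, or $x\notin D_2^{\gamma}$ --- precisely the assignment flagged by interpretations 1--3 of Section~\ref{section3-v} --- and verify that each $v_x$ respects $\vee$, $\wedge$ and $\sim$ by case analyses parallel to those in Theorem~\ref{thm16}. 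Applying truth and falsity preservation of $\alpha \vDash_{t,f}\beta$ pointwise over $x\in U$ then yields $D_1^{\alpha}\subseteq D_1^{\beta}$ and $D_2^{\alpha}\subseteq D_2^{\beta}$, that is, $v(\alpha)\leq v(\beta)$.

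The one place needing genuine care --- and where I expect the main obstacle in the self-contained version --- is that the components $D_1^{\gamma},D_2^{\gamma}$ are now \emph{definable} sets (unions of equivalence classes) rather than arbitrary subsets, while negation in $\mathcal{RS}$ is complementation within $U$. I would discharge this by the observation that for a definable set $D$ one has $x\in D$ if and only if $[x]\subseteq D$, so that complementation and the lower/upper structure interact with the chosen point $x$ exactly as in the power-set computations of Theorem~\ref{thm16}; this is what lets that case analysis carry over unchanged. Since the chaining route in the previous paragraph sidesteps this subtlety altogether, I would present it as the main proof and leave the direct construction as a remark.
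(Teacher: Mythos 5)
Your proof is correct and follows essentially the same route as the paper, which states this theorem without a separate proof precisely because it is the chaining you carry out: part (i) by the subclass observation plus the embedding argument of Corollary~\ref{cor2} mirroring Theorem~\ref{thm10}, and part (ii) by composing (i) with Corollary~\ref{cor1} and Theorem~\ref{thm16}. Your supplementary self-contained argument for (ii), including the point-valuation construction and the remark on definable sets, is a sound but unnecessary addition, and you rightly relegate it to a remark.
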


\vskip 2pt In the process, we have   obtained a rough set semantics for   $\mathcal{L}_{K}$.
\begin{theorem}\label{thm18A} For any 
$\alpha,\beta \in \mathcal{F}$, $\alpha \vdash_{\mathcal{L}_{K}} \beta$ if and only if $\alpha \vDash_{\mathcal{A}_{K\mathcal{RS}}} \beta$.
\end{theorem}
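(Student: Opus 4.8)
The plan is to obtain Theorem \ref{thm18A} by chaining it to the algebraic completeness result already in hand, so that no new semantic machinery is needed. By Theorem \ref{thm9}, $\alpha \vdash_{\mathcal{L}_{K}} \beta$ holds exactly when $\alpha \vDash_{\mathcal{A}_{K}} \beta$, i.e. when $\alpha \vdash \beta$ is valid in \emph{every} Kleene algebra. Hence it suffices to prove the equivalence $\alpha \vDash_{\mathcal{A}_{K}} \beta \Leftrightarrow \alpha \vDash_{\mathcal{A}_{K\mathcal{RS}}} \beta$, which is precisely part (i) of the two-part theorem just stated. (Equivalently, one could route through Theorem \ref{thm17} and part (ii), invoking the equivalence with $\vDash_{t,f}$; the first route is cleaner because the rough-set representation is stated directly in terms of Kleene algebras.)

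For that equivalence, one direction is immediate: since $\mathcal{RS}$ is itself a Kleene algebra for every approximation space (Section \ref{sec4.1}), the class $\mathcal{A}_{K\mathcal{RS}}$ is a subclass of $\mathcal{A}_{K}$, so validity in all Kleene algebras forces validity in all rough-set algebras. The substantive direction is the converse, which I would prove by transporting valuations. Take an arbitrary Kleene algebra $K$ and a valuation $v$ on it. By Corollary \ref{cor2} (the rough-set representation, i.e. Theorem \ref{mainthm}(ii)), there is an approximation space $(U,R)$ and a Kleene embedding $\psi\colon K \hookrightarrow \mathcal{RS}$. Since $\psi$ preserves $\vee,\wedge,\sim$ and the bounds $0,1$, the composite $\psi\circ v$ satisfies all five valuation clauses and is therefore a valuation on $\mathcal{RS}$. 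Assuming $\alpha \vDash_{\mathcal{A}_{K\mathcal{RS}}} \beta$, we get $(\psi\circ v)(\alpha) \leq (\psi\circ v)(\beta)$; because a lattice embedding reflects $\leq$ (if $\psi(a)\leq\psi(b)$ then $\psi(a\wedge b)=\psi(a)$, whence $a\wedge b=a$ by injectivity), it follows that $v(\alpha) \leq v(\beta)$. As $K$ and $v$ were arbitrary, $\alpha \vDash_{\mathcal{A}_{K}} \beta$. Combining the two directions with Theorem \ref{thm9} yields the stated biconditional.

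The main obstacle does not lie in this argument at all: the genuine content has already been discharged in Corollary \ref{cor2}, whose proof rests on the structural isomorphism $\textbf{3}^{I} \cong (\textbf{2}^{I})^{[2]}$ (Theorem \ref{thm2.3}) together with the rough-set realization of complete atomic Boolean algebras (Theorem \ref{thm13}). Within the proof of Theorem \ref{thm18A} proper, the only point requiring (minor) care is the dual role of the embedding $\psi$: it must \emph{preserve} the operations and bounds, so that $\psi\circ v$ is again a valuation, and it must \emph{reflect} the order $\leq$, so that validity of a consequent transports faithfully between $K$ and $\mathcal{RS}$. Both are routine consequences of $\psi$ being an injective Kleene homomorphism.
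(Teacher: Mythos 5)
Your proposal is correct and takes essentially the same route as the paper: the paper leaves Theorem \ref{thm18A} as an immediate consequence of Theorem \ref{thm9} together with the rough set representation (Corollary \ref{cor2}), and the transport-of-valuations argument you spell out (compose $v$ with the embedding, use that an injective homomorphism reflects $\leq$) is exactly the pattern the paper uses in its proof of Theorem \ref{thm10} for the class $\mathcal{A}_{KB^{[2]}}$. Your added care about order reflection is a point the paper glosses over as ``routine verification,'' but it introduces no divergence in method.
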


%Then using Theorems \ref{thm17} and \ref{thm12}, we have, for any $\alpha,\beta \in \mathcal{F}$,
%\begin{theorem}\label{thm18A}
%\begin{enumerate}
% \item $\alpha \vDash_{\mathcal{A}_{K}} \beta$ if and only if $\alpha \vDash_{\mathcal{A}_{K\mathcal{RS}}} \beta$.
%\item \label{17.2} $\alpha \vDash_{\mathcal{A}_{K\mathcal{RS}}} \beta$ iff $\alpha \vDash_{t,f} \beta$, for any $\alpha,\beta \in \mathcal{F}$.
%\end{enumerate}
%\end{theorem} 

%================================================================================================================================================================
\section{Perp semantics for the logic $\mathcal{L}_{K}$} \label{section5}

Dunn's framework of perp semantics for negations is of logical, philosophical as well as algebraic importance. On the  one hand, it provides relational 
semantics for various logics with negations (cf. e.g., \cite{Dunn99,Dunn05}), interpreting the negations as `impossibility' or `unnecessity' operators. On the other hand, one can give representations of various algebras as set algebras \cite{Dunn94}.
In this section, we characterize the Kleene consequent $\alpha~ \wedge \sim \alpha \vdash \beta~ \vee \sim \beta$ in  Dunn's framework of negations. Further, one obtains a representation of Kleene algebras through duality. \\First, we briefly present  the basics of perp semantics. For details, one may refer to \cite{Dunn94,Dunn00,Dunn99,Dunn05}. 
\begin{definition}
A {\rm compatibility frame} is a triple $(W,C,\leq)$ with the following properties.
\begin{enumerate}
\item $(W,\leq)$ is a partially ordered set.
\item $C$ is a binary relation on $W$ such that for $x,y,x^{\prime},y^{\prime} \in W,$ if $x^{\prime} \leq x$, $y^{\prime} \leq y$ and $xCy$ then $x^{\prime}Cy^{\prime}$.
\end{enumerate}
$C$ is called a {\rm compatibility relation} on $W$.\vskip 2pt
\noindent A {\rm perp frame} is a tuple $(W,\perp,\leq)$, where $\perp$, the {\rm perp relation} on $W$, is the complement of the compatibility relation $C$.
\end{definition}
\noindent Recall the  syntax of the logic $\mathcal{L}_{K}$ as defined  in Section \ref{section3}. 
\begin{definition}
A relation $\vDash$ between points of $W$ and propositional variables in $\mathcal{P}$  is called an {\rm evaluation}, if it satisfies the {\rm hereditary} condition:
\begin{itemize}
\item if $x \vDash p$ and $x \leq y$ then $y \vDash p$. 
\end{itemize}
\end{definition}

\noindent Recursively, an evaluation can be extended to $\mathcal{F}$ as follows.
\begin{enumerate}
\item $x \vDash \alpha \wedge \beta$ iff $x \vDash \alpha$ and $x \vDash \beta$.
\item $x \vDash \alpha \vee \beta$ iff $x \vDash \alpha$ or $x \vDash \beta$.
\item $x \vDash \top$.
\item $x \nvDash \bot$.
\item $x \vDash \sim \alpha$ iff $\forall y(xCy \Rightarrow y \nvDash \alpha)$.
\end{enumerate}
\noindent Then one can easily show that $\vDash$ satisfies the hereditary condition for  all formulae in  $\mathcal{F}$. %\\====================================================================================================================================================================================
For the compatibility frame $\textbf{F} := (W,C,\leq)$, the pair $(\textbf{F}, \vDash)$ for an evaluation $\vDash$ is called a {\it model}. The notion of validity is introduced next in the following  (usual) manner.\\
%%\begin{definition}
%\begin{enumerate}
%\item 
A consequent $\alpha \vdash \beta$ is {\it valid in a model} $(\textbf{F}, \vDash)$, denoted as $\alpha \vDash_{(\textbf{F}, \vDash)} \beta$,  if and only if, if $x \vDash \alpha$ 
      then $x \vDash \beta$, for each $x \in W$.\\
			 %Let $\textbf{F} := (W,C,\leq, \vDash)$ be a compatibility frame. 
$\alpha \vdash \beta$ is {\it valid in the compatibility frame}  $\textbf{F}$, denoted as $\alpha \vDash_{\textbf{F}} \beta$,  if and only if  $\alpha \vDash_{(\textbf{F}, \vDash)} \beta$ %$\alpha \vdash \beta$
   %   is valid in 
for every model  $(\textbf{F},\vDash)$. \\% Denote this validity as $\alpha \vDash_{\textbf{F}} \beta$.\\
 Let $\mathbb{F}$ denote  a class of compatibility frames. $\alpha \vdash \beta$ is {\it valid in} $\mathbb{F}$, denoted as $\alpha \vDash_{\mathbb{F}} \beta$,  if and only if   $\alpha \vDash_{\textbf{F}} \beta$ for  every frame belonging to  $\mathbb{F}$. \vskip 2pt %Denote this validity as $\alpha \vDash_{\mathbb{F}} \beta$. 

 Following \cite{Dunn05}, let $K_{i}$ denote the logic whose  postulates are \ref{Lk1} to \ref{Lk11} of the logic $\mathcal{L}_{K}$ (cf. Definition \ref{defLK}). In \cite{Dunn05} it has been proved that 
 $K_{i}$ is the minimal logic which is sound and complete with respect to the class of all compatibility frames. Frame completeness results for various normal  logics with negation have been proved using the canonical frames for the  logics. Let us we give the definitions for the canonical frame.
\begin{definition}
A set of sentences $P$ in a  logic $\Lambda$ is called a {\rm prime theory} if
\begin{enumerate}
\item $\alpha \vdash \beta$ holds and $\alpha \in P$, then $\beta \in P$.
\item $\alpha, \beta \in P$ then $\alpha \wedge \beta \in P$.
\item $\top \in P$ and $\bot \notin P$.
\item $\alpha \vee \beta \in P$ implies $\alpha \in P$ or $\beta \in P$.
\end{enumerate}
\end{definition}
Let $W_{c}$ be the collection of all non-trivial prime theories of $\Lambda$. Define a relation $C_{c}$ on $W_{c}$ as $P_{1}C_{c}P_{2}$ if and only if, for all sentences $\a$ of $\Lambda$, %{\color{blue} ($\mathcal{F}$)}
$\sim \alpha \in P_{1}$
implies $\alpha \notin P_{2}$. The tuple $(W_{c},C_{c},\subseteq)$ is the {\it canonical frame} for $\Lambda$.\\
A logic $\Lambda$ is called {\it canonical}, if its canonical frame is a frame for $\Lambda$.\vskip 2pt

 Let us now consider the canonical frame for the logic $\mathcal{L}_{K}$.
Note that $\mathcal{L}_{K}$ contains $K_{i}$ along with the postulates (\ref{dn1}) $\alpha \vdash \sim\sim\alpha$, (\ref{dn2}) $\sim\sim\alpha \vdash \alpha$ and (\ref{kl}) $\alpha ~\wedge \sim \alpha \vdash \beta~ \vee \sim \beta$ of Definition \ref{defLK}. The consequents $\alpha \vdash \sim\sim\alpha$
and $\sim\sim\alpha \vdash \alpha$ have been characterized by Dunn (for e.g.,\cite{Dunn05}) and Restall \cite{Restall00} respectively as follows.
\begin{theorem}\label{De Morgan}\noindent
\begin{enumerate}
\item $\alpha \vdash \sim\sim \alpha$ is valid in the class of all compatibility frames satisfying the following frame condition:\\
\hspace*{2 cm} $\forall x \forall y(xCy \rightarrow yCx)$.\vskip 2pt 
%\noindent  the canonical frame also satisfies this frame condition.
\item $\sim\sim\alpha \vdash  \alpha$ is valid in the class of all compatibility frames satisfying the frame condition:\\
\hspace*{2 cm} $\forall x \exists y(xCy \wedge \forall z(yCz \rightarrow  z \leq x))$.\vskip 2pt 
\item  The canonical frame  for $\mathcal{L}_{K}$ satisfies both the above frame conditions.
\end{enumerate}
\end{theorem}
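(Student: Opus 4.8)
The plan is to treat parts (1) and (2) as the frame-correspondence (soundness) results attributed to Dunn and Restall, and to concentrate the effort on part (3), the canonicity, which is where the real work lies. For parts (1) and (2) I would unwind the recursive clause $x\vDash\sim\alpha$ iff $\forall y(xCy\Rightarrow y\nvDash\alpha)$ twice to obtain the truth condition of $\sim\sim\alpha$ at a point, namely $x\vDash\sim\sim\alpha$ iff $\forall y(xCy\Rightarrow\exists z(yCz\wedge z\vDash\alpha))$, and then read off that the two stated frame conditions force, respectively, $x\vDash\alpha\Rightarrow x\vDash\sim\sim\alpha$ and $x\vDash\sim\sim\alpha\Rightarrow x\vDash\alpha$. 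In the second case the hereditary condition on $\vDash$ is exactly what converts the order constraint $z\le x$ supplied by the frame condition into $z\nvDash\alpha$ (given $x\nvDash\alpha$). These are the arguments of \cite{Dunn05} and \cite{Restall00}, so I would state them briefly and cite.

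For part (3) I must verify that the canonical frame $(W_{c},C_{c},\subseteq)$ meets both conditions. Symmetry of $C_{c}$ is the easier half: assuming $P_{1}C_{c}P_{2}$ and $\sim\beta\in P_{2}$, I would suppose for contradiction that $\beta\in P_{1}$, apply the double negation postulate (\ref{dn1}) to get $\sim\sim\beta\in P_{1}$, and then feed $\alpha:=\sim\beta$ into the defining clause of $P_{1}C_{c}P_{2}$ to conclude $\sim\beta\notin P_{2}$, contradicting $\sim\beta\in P_{2}$. Hence $\beta\notin P_{1}$, i.e. $P_{2}C_{c}P_{1}$; thus symmetry rests solely on $\alpha\vdash\sim\sim\alpha$.

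The substantial step is the second frame condition: for each prime theory $P$ I must produce a prime theory $Q$ with $PC_{c}Q$ and with the property that every prime theory $R$ satisfying $QC_{c}R$ is contained in $P$. The candidate I would take is $Q:=\{\alpha:\sim\alpha\notin P\}$. First I would check that $Q$ is a non-trivial prime theory: closure under $\vdash$ uses contraposition (if $\alpha\vdash\beta$ then $\sim\beta\vdash\sim\alpha$); closure under $\wedge$ and primeness use the derivable De Morgan equivalences $\sim(\alpha\wedge\beta)\dashv\vdash\sim\alpha\vee\sim\beta$ and $\sim(\alpha\vee\beta)\dashv\vdash\sim\alpha\wedge\sim\beta$ (the latter via the $\vee$-linearity axiom) together with primeness and $\wedge$-closure of $P$; and $\top\in Q$, $\bot\notin Q$ follow from the derivable $\sim\top\vdash\bot$ and the Nor axiom $\top\vdash\sim\bot$. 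Next, $PC_{c}Q$ is immediate, since $\alpha\notin Q$ unfolds by definition to $\sim\alpha\in P$, making the defining implication $\sim\alpha\in P\Rightarrow\alpha\notin Q$ trivially true. Finally, for the maximality clause I would observe that, using both double negation laws (\ref{dn1}) and (\ref{dn2}) to get $\sim\sim\alpha\in P\iff\alpha\in P$, membership $\sim\alpha\in Q$ is equivalent to $\alpha\notin P$; hence $QC_{c}R$ reads $\alpha\notin P\Rightarrow\alpha\notin R$ for all $\alpha$, whose contrapositive is precisely $R\subseteq P$ (so in fact $QC_{c}R\iff R\subseteq P$).

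The main obstacle I anticipate is the verification that the explicitly defined $Q$ really is a non-trivial prime theory — in particular that closure under conjunction and primeness go through cleanly via the De Morgan laws — and keeping straight the two layers of negation when computing $\sim\alpha\in Q\iff\alpha\notin P$. Once these bookkeeping points are settled, both the compatibility $PC_{c}Q$ and the downward condition fall out almost definitionally.
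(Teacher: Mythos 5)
Your proposal is correct, but note that the paper itself supplies no proof of this theorem: parts (1) and (2) are simply attributed to Dunn \cite{Dunn05} and Restall \cite{Restall00}, and part (3) is asserted without argument. Your treatment of (1) and (2) — unwinding $x \vDash \sim\sim\alpha$ to $\forall y(xCy \Rightarrow \exists z(yCz \wedge z \vDash \alpha))$ and invoking heredity for the $z \leq x$ step — matches the cited sources, so citing and sketching is exactly what the paper does. Where you go beyond the paper is the canonicity proof, and your argument checks out in every detail. The symmetry of $C_{c}$ indeed needs only postulate (\ref{dn1}): from $P_{1}C_{c}P_{2}$, $\sim\beta \in P_{2}$ and $\beta \in P_{1}$ one gets $\sim\sim\beta \in P_{1}$, hence $\sim\beta \notin P_{2}$, a contradiction. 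For the second condition, your witness $Q := \{\alpha : \sim\alpha \notin P\}$ is the standard construction (essentially Restall's), and the verifications all go through in $\mathcal{L}_{K}$: closure under $\vdash$ via contraposition; $\wedge$-closure of $Q$ via the derivable $\sim(\alpha \wedge \beta) \vdash \sim\alpha \vee \sim\beta$ (obtainable from postulates 6, 8, \ref{dn2} and rule 4, contraposing $\sim(\sim\alpha \vee \sim\beta) \vdash \alpha \wedge \beta$) together with primeness of $P$; primeness of $Q$ via $\vee$-linearity and $\wedge$-closure of $P$; and $\top \in Q$, $\bot \notin Q$ via the derivable $\sim\top \vdash \bot$ (contrapose Top at $\sim\bot$, then apply (\ref{dn2})) and Nor. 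The compatibility $PC_{c}Q$ is definitional as you say, and your observation that both double negation laws give $\sim\alpha \in Q \iff \alpha \notin P$, whence $QC_{c}R \iff R \subseteq P$, correctly delivers the universally quantified clause $\forall z(yCz \rightarrow z \leq x)$ with $\leq$ read as $\subseteq$ in $(W_{c},C_{c},\subseteq)$. In short: where the paper defers to the literature, you have reconstructed the actual proofs, and done so correctly.
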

It remains for us to characterize the Kleene consequent $\alpha~ \wedge \sim \alpha \vdash \beta~ \vee \sim \beta$ with an appropriate frame condition, and prove that the canonical frame for $\mathcal{L}_{K}$ satisfies the  condition.
\begin{theorem}\label{Kleene}
$\alpha~ \wedge \sim \alpha \vdash \beta~ \vee \sim \beta$ is valid in a compatibility frame, if and only if the compatibility 
relation satisfies the following first order property:\\
\hspace*{3 cm} $\forall x(xCx \vee \forall y(xCy \rightarrow y \leq x))$. \hspace{1 cm} \hfill{(*)} \\
\noindent The canonical frame  for $\mathcal{L}_{K}$  satisfies $(^*)$.
\end{theorem}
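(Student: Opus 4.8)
The plan is to treat Theorem~\ref{Kleene} in three pieces: soundness of $(^*)$, its necessity, and the verification that the canonical frame meets $(^*)$.

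For the soundness direction (that $(^*)$ forces validity of $\alpha \wedge \sim\alpha \vdash \beta \vee \sim\beta$), I would fix a model $(\textbf{F},\vDash)$ on a frame satisfying $(^*)$ and a point $x$ with $x \vDash \alpha \wedge \sim\alpha$, and then split on the disjunction in $(^*)$. If $xCx$, then $x \vDash \sim\alpha$ gives $x \nvDash \alpha$, contradicting $x \vDash \alpha$; so this case cannot arise and the consequent holds vacuously. If instead $\forall y(xCy \to y \leq x)$, I would argue that $x \vDash \sim\beta$ whenever $x \nvDash \beta$: for any $y$ with $xCy$ we have $y \leq x$, so the contrapositive of the hereditary condition yields $y \nvDash \beta$, whence $x \vDash \sim\beta$. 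Thus $x \vDash \beta \vee \sim\beta$ in either case, and the consequent is valid.

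The necessity direction is where the real work lies, and I expect the counter-model construction to be the main obstacle. Assuming $(^*)$ fails at some $x$, i.e.\ $\neg(xCx)$ together with some $y_0$ satisfying $xCy_0$ and $y_0 \nleq x$, I would build an evaluation refuting the Kleene consequent at $x$. The key observation is that $\neg(xCx)$, combined with the downward-closure condition on $C$, forbids any $C$-successor of $x$ from lying above $x$: if $xCy$ with $x \leq y$, then applying that condition with $x' = x \leq x$ and $y' = x \leq y$ gives $xCx$, a contradiction. Hence the principal up-set $\uparrow x := \{w : x \leq w\}$ is disjoint from $C[x] := \{y : xCy\}$. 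Assigning $\uparrow x$ to a variable $p$ and $\uparrow y_0$ to a variable $q$ (both up-sets, hence hereditary, with the remaining variables assigned arbitrarily), I would take $\alpha := p$, $\beta := q$ and check: $x \vDash \alpha$ since $x \in \uparrow x$; $x \vDash \sim\alpha$ since every $C$-successor of $x$ avoids $\uparrow x$; $x \nvDash \beta$ since $y_0 \nleq x$; and $x \nvDash \sim\beta$ since $xCy_0$ while $y_0 \vDash \beta$. This makes $\alpha \wedge \sim\alpha \vdash \beta \vee \sim\beta$ fail at $x$, establishing the contrapositive.

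Finally, for the canonical frame $(W_c,C_c,\subseteq)$ I would argue directly from the postulates. Suppose $\neg(PC_cP)$; then by the definition of $C_c$ there is some $\gamma$ with $\gamma \in P$ and $\sim\gamma \in P$, so $\gamma \wedge \sim\gamma \in P$ by closure under $\wedge$. Applying the Kalman/Kleene postulate $\gamma \wedge \sim\gamma \vdash \beta \vee \sim\beta$ together with closure under $\vdash$ gives $\beta \vee \sim\beta \in P$ for every $\beta$, and primeness then yields $\beta \in P$ or $\sim\beta \in P$ for all $\beta$. Now take any $Q$ with $PC_cQ$ and any $\beta \in Q$; if $\beta \notin P$ then $\sim\beta \in P$, and the defining clause of $C_c$ forces $\beta \notin Q$, a contradiction. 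Hence $Q \subseteq P$, so $\forall Q(PC_cQ \to Q \subseteq P)$ holds and $(^*)$ is satisfied at $P$. Since $P$ was arbitrary, the canonical frame satisfies $(^*)$, which completes the proof.
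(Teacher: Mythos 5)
Your proposal is correct and takes essentially the same route as the paper's proof: the same case split on the two disjuncts of $(^*)$ for soundness, the same style of countermodel for necessity (refuting the instance $p \wedge \sim p \vdash q \vee \sim q$ at the failure point $x$), and the same dichotomy argument for canonicity. Your one deviation is cosmetic: you evaluate $p$ at the up-set $\{z : x \leq z\}$, while the paper uses $\{z : x \leq z \mbox{ and } \neg(xCz)\}$ — but since $\neg(xCx)$ together with downward closure of $C$ forces these two sets to coincide (exactly your observation that no $C$-successor of $x$ lies above $x$), the evaluations are identical, and your formulation merely makes the hereditary check immediate where the paper argues it directly.
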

\begin{proof}
Consider any compatibility frame $(W,C, \leq)$, let $(^*)$ hold, and let $x \in W$.

\noindent Suppose $xCx$, then  $x \nvDash \alpha~ \wedge \sim \alpha$, and 
 trivially, if $x \vDash \alpha~ \wedge \sim \alpha$
then $x \vDash \beta~ \vee \sim \beta$.

 Now suppose  $\forall y(xCy \rightarrow y \leq x)$ is true.
 %Let $x \vDash \alpha~ \wedge \sim \alpha$. This implies $x \vDash \alpha$ and $x\vDash \sim \alpha$. By definition, 
 %$x\vDash \sim \alpha$ if and only if $\forall y(xCy \rightarrow y \nvDash \alpha)$. 
Let $x \nvDash \beta$ and  $xCz$.
Then $z \leq x$, whence $z \nvDash \beta$. So, by definition $x \vDash \sim\beta$. Hence $x \vDash \beta~ \vee \sim\beta $. 
Hence in any case if 
 $x \vDash \alpha ~\wedge \sim \alpha$
then $x \vDash \beta~ \vee \sim \beta$. \\
\noindent Let $(^*)$ not hold.\\
This implies $\exists x(not(x Cx) \wedge \exists y(xCy \wedge y\nleq x))$. Let us define, for any $z,w \in W$,\\
\hspace*{2 cm} $z \vDash p$ if and only if  $x \leq z$ and $\sim xCz$, \\
\hspace*{2 cm} $w \vDash q$ if and only if $y \leq w$.\\
We show that $\vDash$ is an evaluation. Let $z \vDash p$ and 
$z \leq z^{\prime}$. Then $x \leq z^{\prime}$. If $xCz^{\prime}$, then by the frame condition on $C$ we have\\
\hspace*{2 cm} $x \leq x, z \leq z^{\prime}$ $xCz^{\prime}$ imply $xCz$,\\
which is a contradiction to the fact that $z \vDash p$. 

Furthermore, $x \vDash p$, as $x \leq x$ and $not(xCx) $. We also have $x \vDash \sim p$: if $xCw$ for any $w \in W$ then by definition, $w \nvDash p$. Hence, $x \vDash \sim p$ and so $x \vDash p~ \wedge \sim p$.\\
On the other hand, $x \nvDash q$ as $y \nleq x$. By the assumption, 
$xCy$ and $y \vDash q$, hence $x \nvDash \sim q$. So, we have $x \vDash p ~\wedge \sim p$ but
 $x \nvDash q ~\vee \sim q$.\vskip 2pt

\noindent $Canonicity:$\\ 
\noindent Let $\sim PC_{c}P$. By definition of $C_{c}$, then there exists an $\alpha \in \mathcal{F}$  such that $\alpha$, $\sim \alpha$ $\in P$, but then $ \alpha~ \wedge \sim \alpha \in P$. Hence for all $\beta \in \mathcal{F}$, $\beta~ \vee \sim \beta \in P$. So, for all $\beta$,  either $\beta \in P$ or $\sim \beta \in P$.\\
Now let $PC_{c}Q$ and let $\beta \in Q$. Hence $\sim \beta \notin P$ but as from above, $\sim PC_{c}P$,
we have $\beta \in P$. So, $Q \subseteq P$.  \qed
\end{proof}

\begin{definition} Let us call a compatibility frame $(W,C,\leq)$ a {\rm  Kleene frame} if it satisfies the following frame conditions.
\begin{enumerate}
			\item $\forall x \forall y(xCy \rightarrow yCx)$.
			\item $\forall x \exists y(xCy \wedge \forall z(yCz \rightarrow  z \leq x))$.
			\item  $\forall x(xCx \vee \forall y(xCy \rightarrow y \leq x))$.
\end{enumerate}
\noindent Denote by $\mathbb{F}_{K}$, the class of all Kleene frames.
\end{definition}

\noindent  We have then arrived at
\begin{theorem}\label{thm20}
The following are all equivalent, for any $\a,\b \in \mathcal{F}$. 
\bla \item $\alpha \vdash_{\mathcal{L}_{K}} \beta$. 
\item $\alpha \vDash_{\mathcal{A}_{K}} \beta$. 
\item $\alpha \vDash_{\mathcal{A}_{K\mathcal{RS}}} \beta$. 
\item $\alpha \vDash_{t,f} \beta$. 
\item $\alpha \vDash_{\mathbb{F}_{K}} \beta$.
\ela
\end{theorem}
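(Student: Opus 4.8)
The plan is to observe that most of the equivalences have already been secured in the preceding sections, so that the theorem reduces to a single new soundness-and-completeness statement for the perp semantics. Indeed, Theorem \ref{thm9} gives (a) $\Leftrightarrow$ (b), Theorem \ref{thm18A} gives (a) $\Leftrightarrow$ (c), and Theorem \ref{thm17} gives (a) $\Leftrightarrow$ (d). Hence it suffices to prove the one remaining equivalence (a) $\Leftrightarrow$ (e), i.e. that $\alpha \vdash_{\mathcal{L}_{K}} \beta$ if and only if $\alpha \vDash_{\mathbb{F}_{K}} \beta$.

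For soundness, (a) $\Rightarrow$ (e), I would show that every postulate and rule of Definition \ref{defLK} is valid in every Kleene frame. The postulates \ref{Lk1}--\ref{Lk11} together form the logic $K_{i}$, which by the result of Dunn \cite{Dunn05} cited above is sound with respect to the class of all compatibility frames, hence in particular with respect to the subclass $\mathbb{F}_{K}$. It then remains to check the three additional postulates. The double-negation consequents \ref{dn1} and \ref{dn2} are valid in any frame satisfying, respectively, conditions (1) and (2) in the definition of a Kleene frame, by Theorem \ref{De Morgan}(1)--(2); and the Kleene consequent \ref{kl} is valid in any frame satisfying condition (3), by the forward direction of Theorem \ref{Kleene}. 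Since a Kleene frame satisfies all three conditions simultaneously, every $\mathcal{L}_{K}$-derivable consequent is valid in $\mathbb{F}_{K}$, which establishes soundness.

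For completeness, (e) $\Rightarrow$ (a), I would argue contrapositively using the canonical frame $(W_{c},C_{c},\subseteq)$ for $\mathcal{L}_{K}$. The crucial point is that this frame itself belongs to $\mathbb{F}_{K}$: it satisfies conditions (1) and (2) by Theorem \ref{De Morgan}(3), and condition (3) by the canonicity part of Theorem \ref{Kleene}. One then needs a Truth Lemma for the canonical model: taking the canonical evaluation given by $P \vDash \gamma$ iff $\gamma \in P$, one proves $P \vDash \gamma \Leftrightarrow \gamma \in P$ for all $\gamma \in \mathcal{F}$ by induction on the structure of $\gamma$. Finally, assuming $\alpha \nvdash_{\mathcal{L}_{K}} \beta$, a Lindenbaum-type extension lemma produces a prime theory $P \in W_{c}$ with $\alpha \in P$ and $\beta \notin P$; by the Truth Lemma this $P$ satisfies $\alpha$ but refutes $\beta$ in the canonical model, so $\alpha \nvDash_{\mathbb{F}_{K}} \beta$, as desired.

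The main obstacle will be the negation step of the Truth Lemma, in tandem with the extension lemma. The clause $P \vDash \sim\alpha \Leftrightarrow \sim\alpha \in P$ unwinds to the requirement that every prime theory $Q$ with $PC_{c}Q$ omits $\alpha$, which follows directly from the definition of $C_{c}$ when $\sim\alpha \in P$; the converse direction, however, needs that whenever $\sim\alpha \notin P$ there exists a $C_{c}$-compatible prime theory $Q$ containing $\alpha$, and this is precisely where one must construct $Q$ by extension while simultaneously preserving primeness and the relation $C_{c}$. These constructions are standard in the perp-semantics literature (cf. \cite{Dunn99,Dunn05,Restall00}), but verifying that the extension respects both the prime-theory conditions and the compatibility relation is the technically delicate part of the argument.
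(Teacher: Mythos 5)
Your proposal is correct and follows essentially the same route as the paper: the equivalences (a)--(d) are exactly the already-established Theorems \ref{thm9}, \ref{thm18A} and \ref{thm17}, and the remaining equivalence (a) $\Leftrightarrow$ (e) is obtained, just as the paper intends, from Dunn's soundness and completeness of $K_{i}$ together with the frame correspondence and canonicity results of Theorems \ref{De Morgan} and \ref{Kleene}, the canonical frame thereby being a Kleene frame. Your explicit flagging of the Truth Lemma's negation clause and the accompanying prime-theory extension is a fair account of the standard machinery the paper leaves implicit by citation to \cite{Dunn99,Dunn05}.
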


\section{Conclusions} \label{section6}

In case of   Boolean algebras and classical propositional logic, or De Morgan algebras and De Morgan logic,  the algebraic semantics and the 2 or 4-valued semantics (respectively) are equivalent -- due to representation theorems for the two classes of algebras. Here, analogously, we have the result for the class of Kleene algebras, that the algebraic semantics and a 3-valued semantics (given by $\vDash_{t,f}$) of the logic $\mathcal{L}_{K}$ of Kleene algebras are equivalent. This is due to the representation theorem (Theorem \ref{thm2.1}) one is able to prove: any Kleene algebra is embeddable in $B^{[2]}$, for some Boolean algebra $B$.
 
Moreover, we have proved that the 3-valued semantics of  $\mathcal{L}_{K}$ can translate into a rough set (Theorem \ref{thm18A}) as well as perp semantics (Theorem \ref{thm20}). It is thus established that $\mathcal{L}_{K}$ can be imparted equivalent semantics from different perspectives.

\end{document}